\newtheorem{lemma}{Lemma}[section]
\newtheorem{theorem}[lemma]{Theorem}
\newtheorem{proposition}[lemma]{Proposition}
\newtheorem{corollary}[lemma]{Corollary}
\newtheorem{definition}[lemma]{Definition}
\newtheorem*{theorem*}{Theorem}
\theoremstyle{definition}
\newtheorem{examples}[lemma]{Examples}
\newtheorem{example}[lemma]{Example}
\theoremstyle{remark}
\newtheorem{remark}[lemma]{Remark}
\newcommand{\down}{\mathop{\downarrow}}
\newcommand{\ascategory}[1]{}
\newcommand{\DeclareCategory}[2]{\expandafter\newcommand{#1}{{ifmmode{}}}}
\DeclareMathOperator{\ob}{ob}
\def\Rel{\ifmmode{\mathcal R}el\else ${\mathcal R}el$\fi}
\def\SRel{\ifmmode{\mathcal SR}el\else ${\mathcal SR}el$\fi}
\def\FinSet{\ifmmode{\mathcal F}inSet\else ${\mathcal F}inSet$\fi}
\def\BetRel{\ifmmode{\mathcal B}etRel\else ${\mathcal B}etRel$\fi}
\def\CCDLat{\ifmmode{\mathcal C}CD\textrm{-}Lat\else ${\mathcal C}CD\textrm{-}Lat$\fi}
\def\Convex{\ifmmode{\mathcal C}onvex\else ${\mathcal C}onvex$\fi}
\def\Preconvex{\ifmmode{\mathcal P}\!reconvex\else ${\mathcal P}\!reconvex$\fi}
\def\ConvexTop{\ifmmode{\mathcal C}onvex{\mathcal T}\!\!op\else ${\mathcal C}onvex{\mathcal T}\!\!op$\fi}
\def\Inf{\ifmmode{\mathcal I}\!n\!f\else ${\mathcal I}\!n\!f$\fi}
\def\Ord{\ifmmode{\mathcal O}rd\else ${\mathcal O}rd$\fi}
\def\Sup{\ifmmode{\mathcal S}up\else ${\mathcal S}up$\fi}
\def\PartialSup{\ifmmode{\mathcal P}artial{\mathcal S}up\else
  ${\mathcal P}artial{\mathcal S}up$\fi}
\def\DistPartSup{\ifmmode{\mathcal D}ist{\mathcal P}art{\mathcal
    S}up\else ${\mathcal D}ist{\mathcal P}art{\mathcal S}up$\fi}
\newcommand{\Topol}{\ensuremath{{\mathcal T}\!op}}
\newcommand{\Frame}{\ensuremath{\mathcal F}\!\!rame}
\newcommand{\Coframe}{\ensuremath{\mathcal C}o\!f\!rame}
\newcommand{\SpatialCoframe}{\ensuremath{\mathcal S}\mspace{-1mu}patial{\mathcal C}o\!f\!rame}
\newcommand{\TC}{\ConvexTop}
\newcommand{\TCGPartialSup}{\ensuremath{\mathcal
    TCGP}\mathit{artial}{\mathcal S}\mathit{up}}
\def\Euclid{\ifmmode{\mathcal E}uclidean\else ${\mathcal E}uclidean$\fi}
\def\ConvexMetrisable{\ifmmode{\mathcal C}onvexMetrisable\else ${\mathcal C}onvexMetrisable$\fi}
\def\CTop{\ifmmode{\mathcal CT}op\else ${\mathcal CT}op$\fi}
\def\Metric{\ifmmode{\mathcal M}etric\else ${\mathcal M}etric$\fi}
\def\Set{\ensuremath{\rm\bf Set}}
\def\op{^{\rm op}}
\newcommand{\co}{^{\rm co}}
\def\C{\ensuremath{\mathcal C}}
\def\lmorphism #1{\xymatrix@1{\ar@{->}[rr]^{#1}&&}}
\def\FinPart{\ifmmode{\mathcal F}inPart\else ${\mathcal F}inPart$\fi}
\def\Linear{\ifmmode{\mathcal L}inear\else ${\mathcal L}inear$\fi}
\def\mmorphism #1{\xymatrix@1{\ar@{ >->}[r]^{#1}&}}
\def\emorphism #1{\xymatrix@1{\ar@{->>}[r]^{#1}&}}
\newcommand{\mono}{\mmorphism{}}
\def\kmorphism#1{\xymatrix@1{\ar[r]^{#1}|\vert&}}
\def\pmorphism#1{\xymatrix@1{\ar@_{->}[r]^{#1} &}}
\newcommand{\parmorphism}[1]{\xymatrix@1{\ar@{=>}[r]|\circ^{#1} &}}
\newcommand{\dmorphism}[1]{\xymatrix@1{\ar@{->}[r]|\circ^{#1} &}}
\def\morphism#1{\xymatrix@1{\ar[r]^{#1}&}}
\def\cmorphism #1{\xymatrix@1{\ar@{-|>}[r]^{#1}&}}
\newcommand{\dcmorphism}[1]{\xymatrix@1{\ar@{-|>}[r]|\circ^{#1} &}}
\begin{document}


\title{Stone Duality for Topological Convexity Spaces} 
\author{Toby~Kenney \\ \\
Dalhousie University}
\maketitle

\begin{abstract}
  A convexity space is a set $X$ with a chosen family of subsets
  (called convex subsets) that is closed under arbitrary intersections
  and directed unions. There is a lot of interest in spaces that have
  both a convexity space and a topological space structure. In this
  paper, we study the category of topological convexity spaces and
  extend the Stone duality between coframes and topological spaces to
  an adjunction between topological convexity spaces and
  sup-lattices. We factor this adjunction through the category of
  preconvexity spaces (sometimes called closure spaces).   

\end{abstract}

\maketitle

\subsection*{Keywords}

Stone duality; Topological Convexity Spaces; Sup-lattices;
Preconvexity Spaces; Partial Sup-lattices

\section{Introduction}

Stone duality is a contravariant equivalence of categories between
categories of spaces and categories of lattices. The original Stone
duality was between Stone spaces and Boolean algebras
\cite{Stone}. One of the most widely used extensions of Stone duality
is between the categories of sober topological spaces and spatial
coframes (or frames --- since this is a 1-categorical duality, they
are the same thing). This duality extends to an idempotent adjunction between
topological spaces and coframes, given by the functors that send a
topological space to its coframe of closed sets, and the functor that
sends a coframe to its space of points.

In this paper, we develop an idempotent adjunction between topological
convexity spaces and sup-lattices (the category whose objects are
complete lattices, and morphisms are functions that preserve arbitrary
suprema). Topological convexity spaces are sets equipped with both a
chosen family of closed sets and a chosen family of convex sets. A
canonical example is a metric space $X$ with the usual metric
topology, and convex sets being sets closed under the
betweenness relation given by $y$ is between $x$ and $z$ if
$d(x,z)=d(x,y)+d(y,z)$. Many of the properties of metric spaces extend
to topological convexity spaces. Homomorphisms of topological
convexity spaces are continuous functions for which the inverse image
of a convex set is convex.

Our approach to showing this adjunction goes via two equivalent
intermediate categories. The first is the category of preconvexity
spaces. A preconvexity space is a pair $(X,{\mathcal P})$ where
${\mathcal P}$ is a collection of subsets of $X$ that is closed under
arbitrary intersections and empty unions. We will refer to sets
$P\in{\mathcal P}$ as {\em preconvex} subsets of $X$. A homomorphism
of preconvexity spaces $f:(X,{\mathcal P})\morphism{}(X',{\mathcal
  P}')$ is a function $f:X\morphism{}X'$ such that for any $P\in
{\mathcal P}'$, we have $f^{-1}(P)\in{\mathcal P}$. This category of
preconvexity spaces was also studied by~\cite{Dawson1987}, and shown
to be closed under arbitrary limits and colimits.

The second intermediate category that is equivalent to the category of
preconvexity spaces, is a full subcategory category of Distributive
Partial Sup lattices. This category was studied
in~\cite{PartialSup}. Objects of this category are complete lattices
with a chosen family of suprema which distribute over arbitrary
infima. Morphisms are functions that preserve all infima and the
chosen suprema. The motivation for partial sup lattices was an
adjunction between partial sup lattices and preconvexity spaces, which
is shown in~\cite{PartialSup}.

Before we begin presenting the extension of Stone duality to
topological convexity spaces, Section~\ref{Preliminaries} provides a
review of the main ingredients needed. While these reviews do not
contain substantial new results, they are presented in a with a
different focus from much of the literature, so we hope that the
reviews offer a new perspective on these well-studied subjects.  We
first recap the basics of topological convexity spaces. We then review
Stone duality for topological spaces. We then review the category of
distributive partial sup-lattices. This category was defined
in~\cite{PartialSup}, with the motivation of modelling various types of
preconvexity spaces. However, the definition presented in this review
is changed from the original definition in that paper to make it
cleaner in a categorical sense.

\section{Preliminaries}\label{Preliminaries}

\subsection{Topological Convexity Spaces}

\begin{definition}
A {\em topological convexity space} is a triple $(X,{\mathcal F},{\mathcal
  C})$, where $X$ is a set; ${\mathcal F}$ is a collection of subsets
of $X$ that is closed under finite unions and arbitrary intersections,
i.e. the collection of closed sets for some topology on $X$;
and ${\mathcal C}$ is a collection of subsets of $X$ that is closed
under directed unions and arbitrary intersections. Note that these
include empty unions and intersections, so $X$ and $\emptyset$ are in
both $\mathcal F$ and $\mathcal C$. Sets in $\mathcal F$ will be
called {\em closed} subsets of $X$ and sets in $\mathcal C$ will be
called {\em convex} subsets of $X$.
\end{definition}

The motivation here is that $(X,{\mathcal F})$ is a topological space,
while $(X,{\mathcal C})$ is an abstract convexity space. Abstract
convexity spaces are a generalisation of convex subsets of standard
Euclidean spaces. Abstract convexity spaces were defined in
\cite{Kay1971}, though in that paper, the definition did not require
$\mathcal C$ to be closed under nonempty directed unions. Closure
under directed unions was an additional property, called ``domain
finiteness''. Later authors incorporated closure under directed unions
into the definition of an abstract convexity space, and used the term
{\em preconvexity space} for a set with a chosen collection of subsets
that is closed under arbitrary intersections and contains the empty
set~\cite{Dawson1987}.

While the definition of an abstract convexity space captures many of
the important properties of convex sets in geometry, it also allows a
large number of interesting examples far beyond the original examples
from classical geometry, including many examples from combinatorics
and algebra. The resulting category of convexity spaces has many
natural closure properties~\cite{Dawson1987}. 

The definition above does not include any interaction between the
topological and convexity structures on $X$. While it will be
convenient to deal with such general spaces, it is also useful to
include compatibility axioms between the convexity and topological
structures. The following axioms from \cite{VanDeVel} are often used
to ensure suitable compatibility between topology and convexity
structure.

\begin{enumerate}[(i)]
  \item All convex sets are connected.
  \item All polytopes (convex closures of finite sets) are compact.
\item The hull operation is uniformly continuous relative to a metric which
generates the topology.
\end{enumerate}

We will modify the third condition to not require the topology to come
from a metric space, giving the weaker condition that the convex
closure operation preserves compact sets.

\begin{definition}\label{DefCompatible}
We will call a topological
convexity space {\em compatible} if it satisfies the two conditions  

\begin{enumerate}[(i)]
  \item All convex sets are connected.
  \item The convex closure of a (topologically) compact set is
    (topologically) compact.

\end{enumerate}
\end{definition}

At this point, we will introduce some notation for describing
topological convexity spaces. For any subset $A\subseteq X$, we will
write $[A]$ for the intersection of all convex sets containing $A$. To
simplify notation, when $A$ is finite, we will write
$[a_1,\ldots,a_n]$ instead of $[\{a_1,\ldots,a_n\}]$.

\begin{examples}\label{TCSexamples}
\ 
\begin{enumerate}
  
\item If $(X,d)$ is a metric space, then setting
${\mathcal
  F}$ to be the closed sets for the metric topology, i.e.
  $${\mathcal
  F}=\left\{A\subseteq X\middle|(\forall x\in X)\left(\bigwedge_{y\in
  A}d(x,y)=0\Rightarrow x\in A\right)\right\}$$ and $${\mathcal
  C}=\left\{A\subseteq X|(\forall x,y,z\in X)\left((x,z\in A\land
  d(x,z)=d(x,y)+d(y,z))\Rightarrow y\in A\right)\right\}$$
we have that $(X,{\mathcal F},{\mathcal C})$ is a 
topological convexity space. Finitely generated convex sets of $(X,d)$
are closed and compact, and the convex closure operation is uniformly
continuous. For convex sets to be connected, we need some suitable
interpolation property, namely that for any $r<d(x,y)$, there is some
$z\in [x,y]$ such that $d(x,z)=r$, since if this is not the case, then
$B(x,r)\cap[x,y]$ and $B(y,d(x,y)-r)\cap[x,y]$ form a disjoint open cover of
$[x,y]$. 

\item Let $L$ be a complete lattice. We define a topological convexity
  space structure by
  $${\mathcal F}=\left\{\bigcap_{i\in I}F_i\middle|(\forall i\in I)((\exists
    x_1,\ldots,x_{n_i}\in X)(F_i=\down\{x_1,\ldots,x_{n_i}\})\right\}$$
  and 
  $${\mathcal C}=\left\{I\subseteq X|(\forall x_1,x_2\in
  I)\left((\forall y\leqslant x_1)(y\in I)\land(x_1\lor x_2\in
  I)\right)\right\}$$ That is, ${\mathcal F}$ is the set of arbitrary
  intersections of finitely generated downsets (which are the closed
  sets for the weak topology \cite{Hoffman}) and ${\mathcal C}$ is the
  set of ideals of $L$. This topological convexity space is
  compatible. To prove connectedness of convex sets, we want to show
  that an ideal cannot be covered by two disjoint weak-closed
  sets. Suppose $U$ and $V$ are disjoint weak-closed sets that cover
  $I$. Let $a\in I\cap U$ and $b\in I\cap V$. Then $a\lor b\in I$, and
  if $a\lor b\in U$, then $b\in U$ contradicting disjointness of $U$
  and $V$. Similarly if $a\lor b\in V$ then $a\in V$.

\item Let $n\in {\mathbb Z}^+$ be a positive integer. Let $S\!_n$ be the
  group of permutations on $n$ elements. Let $\mathcal F$ consist of
  all subsets of $S\!_n$, and for any partial order $\preceq$ on $n$,
  let $P_\preceq=\{\sigma\in S\!_n| (\forall i,j\in
  \{1,\ldots,n\})(i\preceq j\Rightarrow \sigma(i)\leqslant\sigma(j))
  \}$, were $\leqslant$ is the usual total order on ${\mathbb
    Z}^+$. That is $P_\preceq$ is the set of permutations $\sigma$
  such that $\preceq$ is contained in $\sigma^{-1}(\leqslant)$.  let
  ${\mathcal C}=\{P_\preceq|\preceq\textrm{ is a partial order on }
  \{1,\ldots,n\}\}\cup\{\emptyset\}$. Since $S\!_n$ is finite, to prove
  that $(S\!_n,{\mathcal F},{\mathcal C})$ is a compatible convexity
  space, we just need to show that $\mathcal C$ is closed under
  intersection. This is straightforward. Since partial orders are
  closed under intersection, the poset of partial orders on
  $\{1,\ldots,n\}$, with a top element adjoined, is a lattice. Thus
  the intersection $P_\preceq\cap
  P_\sqsubseteq=P_{\preceq\lor\sqsubseteq}$, so $\mathcal C$ is closed
  under intersection. This is a metric topology, with the metric given
  by $d(\sigma,\tau)$ is the Cayley distance from $\sigma$ to $\tau$,
  under the Coxeter generators. That is, $d(\sigma,\tau)$ is the
  length of the shortest word equal to $\tau\sigma^{-1}$ in the
  generators $\{\tau_{i}|i=1,\ldots,n-1\}$, where
  $$\tau_i(j)=\left\{\begin{array}{ll} i+1&\textrm{if
  }j=i\\ i&\textrm{if }j=i+1\\ j&\textrm{otherwise}\end{array}\right.$$
  is the transposition of $i$ and $i+1$.
        
\item If $G$ is a topological group, or more generally a universal
  algebra equipped with a suitable topology, then we can define a
  topological convexity space by making subgroups (or more generally
  subalgebras) and the empty set convex, and keeping the closed sets
  from the topology.

\end{enumerate}
  
\end{examples}

Having defined the objects in the category of topological convexity
spaces, we need to define the morphisms.

\begin{definition}
  A {\em homomorphism} $f:(X,{\mathcal F},{\mathcal
    C})\morphism{}(X',{\mathcal F}',{\mathcal C}')$ between
  topological convexity spaces is a function $f:X\morphism{}X'$ such
  that for every $F\in{\mathcal F}'$, $f^{-1}(F)\in {\mathcal F}$ and
  for every $C\in{\mathcal C'}$, $f^{-1}(C)\in{\mathcal C}$.
\end{definition}

The condition that $f^{-1}(F)\in {\mathcal F}$ is the condition that
$f$ is continuous as a function between topological spaces. The
condition that $f^{-1}(C)\in{\mathcal C}$ is called {\em monotone}
by~\cite{Dawson1987}, by analogy with the example of endofunctions of
the real numbers. This was in the context of convexity spaces without
topological structure. Dawson~\cite{Dawson1987} uses the term
${\mathcal A}lign$ for the category of convexity spaces and monotone
homomorphisms, and ${\mathcal C}onvex$ for the category of convexity
spaces and functions whose forward image preserves convex
sets. However, this terminology has not been widely used, and later
authors have all considered the monotone homomorphisms as the natural
homomorphisms of abstract convexity spaces. In the case of topological
convexity spaces, the monotone condition is an even more natural
choice because it aligns well with the continuity condition and leads
to the Stone duality extension that we show in this paper. 

\begin{examples}
\ 
\begin{enumerate}
  
\item For the topological convexity space coming from a metric space,
  a homomorphism is a function $f:X\morphism{}Y$ such that whenever
  $d(x,z)=d(x,y)+d(y,z)$, we have
  $d(f(x),f(z))=d(f(x),f(y))+d(f(y),f(z))$. That is, $f$ embeds
  geodesics from $X$ into the geodesics in $Y$.

\item If $L$ and $M$ are complete lattices with the weak topology and
  convex sets are ideals, then topological convexity space
  homomorphisms from $L$ to $M$ are exactly sup-homomorphisms.

\end{enumerate}

\end{examples}

For the partial order convexity on $S\!_n$ from
Example~\ref{TCSexamples}.3, describing the topological convexity
space morphisms is more challenging.
We start by looking at half-spaces (convex sets with convex
complements). Half-spaces of $S\!_n$ are of the form
$C_{ij}=P_\preceq$, where $\preceq$ is the partial order where the
only non-trivial comparison is $i\preceq j$. That is,
$C_{ij}=\{\sigma\in S\!_n|\sigma(i)\leqslant \sigma(j)\}$.
We first consider
  automorphisms:

  \begin{lemma}\label{HalfSpaceCovers}
    If $i$, $j$, $k$ and $l$ are distinct, then the only half-spaces
    that contain $C_{ij}\cap C_{kl}$ are $C_{ij}$ and $C_{kl}$.
  \end{lemma}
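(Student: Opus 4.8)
The plan is to use the concrete description of half-spaces together with the identification of a permutation with the linear order it induces. Recall that for distinct $a,b$ the half-space $C_{ab}$ is $\{\sigma\in S_n:\sigma(a)<\sigma(b)\}$, so that $C_{ab}$ and $C_{ba}$ are complementary, and that a permutation $\sigma$ corresponds to the linear order $\prec_\sigma$ on $\{1,\dots,n\}$ defined by $x\prec_\sigma y\iff\sigma(x)<\sigma(y)$; under this correspondence $\sigma\in C_{ab}$ iff $a\prec_\sigma b$. Consequently $C_{ij}\cap C_{kl}$ is exactly the set of permutations whose induced linear order extends the binary relation $R=\{(i,j),(k,l)\}$; since $i,j,k,l$ are distinct, $R$ is the strict part of a partial order, so this set is nonempty.

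Both $C_{ij}$ and $C_{kl}$ obviously contain $C_{ij}\cap C_{kl}$, so the content of the lemma is that no other half-space does. I would fix distinct $a,b$ with $(a,b)\notin\{(i,j),(k,l)\}$ and construct a permutation $\sigma$ lying in $C_{ij}\cap C_{kl}$ but not in $C_{ab}$; since $C_{ab}^{c}=C_{ba}$, this means finding a linear order on $\{1,\dots,n\}$ that simultaneously puts $i$ before $j$, $k$ before $l$, and $b$ before $a$. Such a linear order exists precisely when the directed graph $D$ with the three edges $i\to j$, $k\to l$, $b\to a$ is acyclic, so the whole lemma reduces to checking that $D$ is acyclic under the stated hypotheses.

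The acyclicity check is a short finite case analysis, since $D$ has at most three edges. There are no loops because $a\ne b$ and $i\ne j$, $k\ne l$. A $2$-cycle would mean two of the edges are mutual reverses: $i\to j$ and $k\to l$ cannot be, as $i,j,k,l$ are distinct; $i\to j$ and $b\to a$ are reverses only if $(a,b)=(i,j)$, and $k\to l$ and $b\to a$ only if $(a,b)=(k,l)$ --- both cases are excluded by hypothesis. Finally, a directed $3$-cycle through all three edges would, in one cyclic ordering, force the head $j$ of $i\to j$ to equal the tail $k$ of $k\to l$, and in the other force the head $l$ of $k\to l$ to equal the tail $i$ of $i\to j$; either contradicts distinctness. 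Hence $D$ is acyclic; taking any $\sigma$ inducing a linear extension of $D$ completes the argument.

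I do not anticipate a genuine obstacle; the only care needed is the bookkeeping above, including the harmless degenerate possibility $(a,b)=(j,i)$, in which the edge $b\to a$ simply coincides with $i\to j$ and $D$ is trivially acyclic. If one prefers a less computational phrasing, the same conclusion follows from the classical fact that a finite partial order is the intersection of its linear extensions: $C_{ij}\cap C_{kl}\subseteq C_{ab}$ says that $a\prec b$ holds in every linear extension of $R$, hence $(a,b)\in R=\{(i,j),(k,l)\}$.
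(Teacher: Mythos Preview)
Your argument is correct. Both you and the paper prove the lemma by exhibiting, for each half-space $C_{ab}$ with $(a,b)\notin\{(i,j),(k,l)\}$, a permutation in $C_{ij}\cap C_{kl}\setminus C_{ab}$. The paper does this by writing down one explicit ordering (for the sample case $s=j$, $t\ne i$) and then asserting that ``similar permutations work for all combinations''. Your reformulation---a linear extension exists iff the three-edge digraph $\{i\to j,\ k\to l,\ b\to a\}$ is acyclic---organises the same case analysis more transparently and makes completeness immediate, since the only possible cycles (loops, $2$-cycles, $3$-cycles) are each ruled out in one line by the distinctness of $i,j,k,l$ and the hypothesis $(a,b)\notin\{(i,j),(k,l)\}$. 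The alternative one-line appeal to the fact that a finite strict partial order equals the intersection of its linear extensions is slicker still and dispenses with the case split entirely. So the underlying idea is the same as the paper's, but your packaging is cleaner and more self-contained than the paper's sketch.
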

  
  \begin{proof}
    For any half-space $C_{st}\not\in\{C_{ij},C_{kl}\}$, we need to
    find some $\sigma\in C_{ij}\cap C_{kl}$ with $\sigma\not\in
    C_{st}$. Suppose $s=j$ and $t\ne i$, then we can find a
    permutation $\sigma$ such that
    $\sigma(i)<\sigma(j)<\sigma(t)<\sigma(k)<\sigma(l)$. This $\sigma$
    is in $\C_{ij}\cap C_{kl}$, but not in $C_{st}$ as
    required. Similar permutations work for all combinations.
  \end{proof}

  \begin{lemma}\label{AutomorphismSn}
    An automorphism $f:(S\!_n,P(S\!_n),{\mathcal
      C})\morphism{}(S\!_n,P(S\!_n),{\mathcal C})$ is of the form
    $f(\sigma)=\theta\sigma\tau$ for some $\tau\in S\!_n$ and some
    $\theta\in\{e,\rho\}$ where $e$ is the identity permutation and
    $\rho$ is the permutation which reverses the order of all
    elements.
  \end{lemma}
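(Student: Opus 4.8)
The plan is to pin $f$ down through its action on the half-spaces. Since the topology on $S_n$ is discrete, $f$ is simply a bijection $f\colon S_n\to S_n$ for which both $f$ and $f^{-1}$ pull convex sets back to convex sets, so $C\mapsto f^{-1}(C)$ is an inclusion-preserving bijection of $\mathcal C$ onto itself, i.e.\ an automorphism of the lattice $(\mathcal C,\subseteq)$. Every convex set is an intersection of half-spaces, since $P_\preceq=\bigcap_{i\prec j}C_{ij}$, and $C_{ij}$ is covered only by $S_n$ because the only partial orders contained in $\{(i,j)\}$ are $\emptyset$ and $\{(i,j)\}$; hence the half-spaces are exactly the coatoms of $(\mathcal C,\subseteq)$, and $f^{-1}$ permutes them. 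So I would write $f^{-1}(C_{ij})=C_{\psi(i,j)}$, where $\psi$ is a permutation of the set of ordered pairs of distinct elements of $\{1,\dots,n\}$, equivalently of the directed edges of the complete graph $K_n$.

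Next I would read off the constraints on $\psi$. From $C_{ji}=S_n\setminus C_{ij}$ and the fact that $f^{-1}$ commutes with complementation, $\psi(j,i)=\psi(i,j)^{\mathrm{op}}$ (writing $(a,b)^{\mathrm{op}}=(b,a)$), so $\psi$ also induces a permutation of the $2$-element subsets of $\{1,\dots,n\}$. Since $f^{-1}$ preserves finite intersections and inclusions and fixes $\emptyset$, the number of coatoms above $C_e\cap C_{e'}$ depends only on $\{e,e'\}$ and is preserved by $\psi$; working out these numbers---all coatoms when $e'=e^{\mathrm{op}}$, three when the head of one of $e,e'$ is the tail of the other, and two otherwise (this last, disjoint-support case being Lemma~\ref{HalfSpaceCovers})---shows that $\psi$ preserves the relations ``$e'=e^{\mathrm{op}}$'' and ``$e$ and $e'$ meet head to tail'', and therefore also ``$e$ and $e'$ have a common tail or a common head''.

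I would then reconstruct a permutation of $\{1,\dots,n\}$ from $\psi$, assuming $n\ge 3$ (the cases $n\le 2$ being settled directly), by means of \emph{pencils}: for $i\in\{1,\dots,n\}$ put $P_i^{+}=\{(i,j):j\neq i\}$ and $P_i^{-}=\{(j,i):j\neq i\}$. These $2n$ sets are exactly the maximal families of directed edges that pairwise have a common tail, respectively a common head, so $\psi$ carries each of them onto some $P_j^{\pm}$; and since $\{P_i^{+}\}_i$ and $\{P_i^{-}\}_i$ are the only two $n$-element collections of pairwise-disjoint pencils, either $\psi(P_i^{+})=P_{\tau(i)}^{+}$ for all $i$ or $\psi(P_i^{+})=P_{\tau(i)}^{-}$ for all $i$, for a single $\tau\in S_n$. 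Because $(i,j)$ is the unique edge in $P_i^{+}\cap P_j^{-}$, this forces $\psi(i,j)=(\tau i,\tau j)$ throughout in the first case and $\psi(i,j)=(\tau j,\tau i)$ throughout in the second. A short computation of $f^{-1}(C_{ij})$ shows, conversely, that the automorphisms $\sigma\mapsto\tau\sigma\theta$ with $\theta\in\{e,\rho\}$ realise precisely these two patterns; and since an automorphism of $(S_n,P(S_n),\mathcal C)$ is determined by the permutation of half-spaces it induces (as $\{\sigma\}=\bigcap\{C_{ij}:\sigma(i)<\sigma(j)\}$ for every $\sigma$), $f$ must be of this form.

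The step I expect to be the crux is the rigidity in the third paragraph: that the relations preserved by $\psi$ already force $\psi$ to be a relabelling of $\{1,\dots,n\}$, possibly post-composed with the global order-reversal $\rho$. For $n\ge 4$ this can alternatively be obtained by recognising the induced map on $2$-subsets as an automorphism of the triangular graph $T(n)=L(K_n)$ and invoking Whitney's theorem, but then the exceptional automorphisms of $T(4)$ must be excluded separately using the head/tail data, so I would prefer the pencil argument, which covers all $n\ge 3$ uniformly.
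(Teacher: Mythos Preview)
Your proof is correct and takes a genuinely different route from the paper's.

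The paper first observes that the translations $\sigma\mapsto\tau\sigma$ are automorphisms and uses them to reduce to the stabiliser of the identity permutation $e$. It then argues that $\{C_{i(i+1)}:1\le i\le n-1\}$ is the unique $(n-1)$-element family of half-spaces whose intersection is $\{e\}$, so any automorphism fixing $e$ must permute this family; Lemma~\ref{HalfSpaceCovers} is invoked to show that adjacent pairs $C_{(i-1)i},\,C_{i(i+1)}$ must go to adjacent pairs, forcing the induced permutation of the simple half-spaces to be either the identity or the full reversal $C_{i(i+1)}\mapsto C_{(n-i)(n+1-i)}$ (the latter realised by $\sigma\mapsto\rho\sigma\rho$). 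A short induction then propagates the conclusion from the simple half-spaces $C_{i(i+1)}$ to all $C_{ij}$.

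You instead work globally: you treat the half-spaces as the directed edges of $K_n$, read off from coatom-counts over pairwise intersections exactly which incidence relations the induced permutation $\psi$ must preserve, and then reconstruct a vertex permutation via the pencil argument. This avoids both the preliminary reduction to a point stabiliser and the separate inductive step, and it makes the connection with line-graph rigidity explicit, as you note. The paper's argument, by contrast, leans on the Coxeter structure (the chain of simple half-spaces) and is somewhat shorter once the translation action is available. Both proofs use Lemma~\ref{HalfSpaceCovers} at essentially the same point, namely to distinguish the head-to-tail configuration (three coatoms above the intersection) from the remaining two-coatom configurations.

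Two small remarks. First, when you pass from ``$\psi$ permutes the two $n$-element disjoint families of pencils'' to ``$\psi(i,j)=(\tau i,\tau j)$ or $(\tau j,\tau i)$ for a \emph{single} $\tau$'', you are implicitly using that the permutations induced on the $+$-pencils and on the $-$-pencils coincide; this follows at once from your earlier observation that $\psi$ commutes with edge-reversal, but it is worth making explicit. Second, the ``short computation'' you defer at the end deserves to be written out: matching your two $\psi$-patterns to the stated form $\sigma\mapsto\tau\sigma\theta$ requires a moment's care with the composition conventions in play.
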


  \begin{proof}
    It is easy to see that for $\tau\in S\!_n$, $f_\tau$ given by 
    $f_\tau(\sigma)=\sigma\tau$ is an automorphism of
    $(S\!_n,P(S\!_n),\mathcal C)$. Now we consider the stabiliser of the
    identity element. Since $\{C_{i(i+1)}|i=1,\ldots,(n-1)\}$ is the
    only set of $n-1$ half-spaces whose intersection contains only the
    identity permutation, any automorphism which fixes the identity
    permutation must fix this set. Furthermore, since $C_{(i-1)i}\cap
    C_{i(i+1)}\subseteq C_{(i-1)(i+1)}$, it follows that
    $f^{-1}(C_{(i-1)i})\cap f^{-1}(C_{i(i+1)})\subseteq
    f^{-1}(C_{(i-1)(i+1)})$. Since $f$ is an automorphism,
    $C_{(i-1)(i+1)}$ cannot be $C_{(i-1)i}$ or $C_{i(i+1)}$. By
    Lemma~\ref{HalfSpaceCovers}, it follows that $f^{-1}(C_{(i-1)i})$
    and $f^{-1}(C_{i(i+1)})$ are adjacent half-spaces. Since the set
    of half-spaces $\{C_{i(i+1)}|i=1,\ldots,(n-1)\}$ is permuted by
    $f^{-1}$, the only possible permutations are the identity and
    the reversal $C_{i(i+1)}\mapsto C_{(n-i)(n+1-i)}$. This reversal
    sends a permutation $\sigma$ to $\rho\sigma\rho$.

    We want to show that these are the only elements in the stabiliser
    of the identity. By applying $\rho\sigma\rho$ if necessary, we can
    change an element in the stabiliser of $e$ to one such that
    $f^{-1}$ fixes every $C_{i(i+1)}$. Now $C_{i(i+2)}$ is the unique
    half-space that contains $C_{i(i+1)}\cap C_{(i+1)(i+2)}$ that is
    not equal to either $C_{i(i+1)}$ or $C_{(i+1)(i+2)}$, so it is
    also fixed by $f^{-1}$. By induction, we can show that every
    $C_{ij}$ is fixed by $f^{-1}$, and thus $f$ is the identity.
    
  \end{proof}

  \begin{proposition}
    $f\!\!:S\!_n\morphism{}S\!_m$ is a surjective topological convexity space
    homomorphism, if and only if there is an injective function
    $g:m\morphism{}n$, such that $f$ is either given by
    \begin{enumerate}
      
\item    $f(\tau)(i)=|\{j\in\{1,\ldots,m\}|\tau(g(j))\leqslant
    \tau(g(i))\}|$. That is, $f(\tau)$ is the automorphism part of the
    automorphism---order-preserving-inclusion factorisation of $\tau g$.

    \hfil\xymatrix{n \ar[r]^{\tau} & n \\ m \ar@{ >->}[u]^g
      \ar[r]_{f(\tau)} & m \ar@{ >->}[u]_i}
    
    or

\item    $f(\tau)(i)=|\{j\in\{1,\ldots,m\}|\tau(g(j))\geqslant
    \tau(g(i))\}|$. That is, $f(\tau)$ is the automorphism part of the
    automorphism-order-preserving-inclusion factorisation of $\rho\tau
    g$, where $\rho$ is the order-reversing permutation on $n$.

    \hfil\xymatrix{n \ar[r]^{\tau} & n \ar[r]^{\rho} &n \\ m \ar@{ >->}[u]^g
      \ar[rr]_{f(\tau)} & & m \ar@{ >->}[u]_i}

    \end{enumerate}
    
    \end{proposition}

  \begin{proof}
    Firstly, we show that for an injective function $g:n\morphism{}m$,
    both the functions
    $$\alpha_g(\sigma)(i)=|\{j\in\{1,\ldots,m\}|\sigma(g(j))\leqslant
    \sigma(g(i))\}|$$
    and
        $$\delta_g(\sigma)(i)=|\{j\in\{1,\ldots,m\}|\sigma(g(j))\geqslant
    \sigma(g(i))\}|$$
    are surjective homomorphisms. We see that for any 
    $i,j\in\{1,\ldots,m\}$, $${\alpha_g}^{-1}(C_{ij})=\{\sigma\in S\!_n|\alpha_g(\sigma)(i)<\alpha_g(\sigma)(j)\}=\{\sigma\in S\!_n|\sigma(g(i))<\sigma(g(j))\}=C_{g(i)g(j)}$$
    and
    $${\delta_g}^{-1}(C_{ij})=\{\sigma\in S\!_n|\delta_g(\sigma)(i)<\delta_g(\sigma)(j)\}=\{\sigma\in S\!_n|\sigma(g(i))>\sigma(g(j))\}=C_{g(j)g(i)}$$
    so $\alpha_g$ and $\delta_g$ are homomorphisms.
For surjectivity, let $\phi\in S\!_m$. We need to show that
$\phi=\alpha_g(\tau)$ for some $\tau\in S_n$. Given the injections
$m\mmorphism{g}n$ and $\xymatrix@1{m \ar[r]^{\phi} & m \ar[r]^{i} &n}$
for any injective order-preserving $m\mmorphism{i}n$,
$\xymatrix@1{n & m \ar@{ >->}[l]_{g} \ar@{ >->}[r]^{i\phi} & n}$ is a
partial permutation of $n$, so it extends to a full permutation $\tau$
with $\alpha_g(\tau)=\phi$. Similarly, we have
$\delta_g(\rho\tau)=\alpha_g(\tau)=\phi$, so $\alpha_g$ and $\delta_g$
are both surjective.

    Conversely, let
$f:S\!_n\morphism{}S\!_m$ be a surjective homomorphism. Since $\{e\}$ is
convex, where $e$ is the identity homomorphism, $f^{-1}(\{e\})$ is
convex. Furthermore,
$f^{-1}(\{e\})=\bigcap_{i<j}f^{-1}(C_{ij})$. Since $f^{-1}$ preserves
convex sets, for every $i,j\in \{1,\ldots,m\}$ $f^{-1}(C_{ij})=C_{st}$
for some $s,t\in\{1,\ldots,n\}$. Furthermore, $f^{-1}(C_{ij}\cap
C_{jk})=C_{st}\cap C_{tu}$. Thus, we have
$f^{-1}(\{e\})=C_{i_1i_2\ldots i_m}=C_{i_1i_2}\cap
C_{i_2i_3}\cdots\cap C_{i_{m-1}i_m}$. If $f^{-1}(C_{12})=C_{i_1i_2}$,
then we can define $g(j)=i_j$, and we have that $f=\alpha_g$. If on
the other hand $f^{-1}(C_{12})=C_{i_{m-1}i_m}$, then we let
$g(j)=i_{m+1-j}$ and we have $f=\delta_g$.

  \end{proof}

  General homomorphisms are more difficult to describe.

\section{Preconvexity Spaces and the Adjunction with Topological
  Convexity Spaces}

\begin{definition}
  A {\em preconvexity space} (sometimes called a {\em closure space})
  is a pair $(X,{\mathcal P})$, where $X$ is a set and $\mathcal P$ is
  a collection of subsets of $X$ that is closed under arbitrary
  intersections and contains the empty set (since $X$ is an empty
  intersection, we also have $X\in{\mathcal P}$).
\end{definition}

This was~\cite{Kay1971}'s original definition of a convexity
space. However, later authors decided that closure under directed
unions should be a required property for a convexity space,
and~\cite{Dawson1987} introduced the term preconvexity space for these
spaces that do not require closure under directed unions.

\begin{definition}
A {\em homomorphism} $(X,{\mathcal P})\morphism{f}(X',{\mathcal P}')$
of preconvexity spaces is a function $X\morphism{f}X'$ such that for
any preconvex set $P\in{\mathcal P}'$, the inverse image
$f^{-1}(P)\in{\mathcal P}$. 
  \end{definition}

\begin{examples}\label{PreconvexityExample}

\item If $(X,{\mathcal F},{\mathcal C})$ is a topological convexity
  space, then $(X,{\mathcal F}\cap{\mathcal C})$ is a preconvexity
  space. Any topological convexity
  space homomorphism  $(X,{\mathcal F},{\mathcal
    C})\morphism{f}(X',{\mathcal F}',{\mathcal C}')$ is a preconvexity
  homomorphism. Conversely, if ${\mathcal C}'$ consists of directed
  unions from ${\mathcal F}'\cap{\mathcal C}'$, and ${\mathcal F}'$
  consists of intersections of finite unions from ${\mathcal
    F}'\cap{\mathcal C}'$, then any preconvexity homomorphism 
$(X,{\mathcal F}\cap{\mathcal
    C})\morphism{g}(X',{\mathcal F}'\cap{\mathcal C}')$ is a
  topological convexity homomorphism. 
  
\end{examples}


Example~\ref{PreconvexityExample} gives a functor
$\TC\morphism{CC}\Preconvex$ that sends every topological convexity
space to the preconvexity space of closed convex sets.  The action on
morphisms simply reinterprets the topological convexity homomorphism
as a preconvexity homomorphism.

This functor is not a fibration. Indeed some morphisms in
\Preconvex\ do not even have lifts. Consider for example
$$\left(\{0,1,2\},\big\{\emptyset,\{0\},\{1\},\{2\},\{0,1,2\}\big\}\right)\morphism{f}\left(\{0,1\},\big\{\emptyset,\{0,1\}\big\}\right)$$
given by $f(0)=f(1)=0$ and $f(2)=1$. This has no lift to a topological
convexity space homomorphism with codomain
$$\left(\{0,1\},\big\{\emptyset,\{0,1\}\big\},\big\{\emptyset,\{0\},\{0,1\}\big\}\right)$$
because the preimage of $\{0\}$ under $f$ is $\{0,1\}$, so for this to
be a homomorphism, we would need $\{0,1\}$ to be convex. However,
since $\{0\}$ and $\{1\}$ are both closed, it follows that $\{0,1\}$
must be closed, so the resulting space is no longer in the fibre above $$\left(\{0,1,2\},\big\{\emptyset,\{0\},\{1\},\{2\},\{0,1,2\}\big\}\right)$$

The closed-convex functor does have a right adjoint, $IS$, which sends
the preconvexity space $(X,{\mathcal P})$ to $(X,\overline{\mathcal
  P},\widetilde{\mathcal P})$ where $\widetilde{\mathcal P }$ is the
closure of $\mathcal P$ under directed unions, and
$\overline{\mathcal P }$ is the closure of $\mathcal P$ under finite
unions and arbitrary intersections.

\begin{lemma}\label{ConvexSetsFromPreconvexity}
  For any preconvexity space $(X,{\mathcal P})$, the set
  $\widetilde{\mathcal P}$ is closed under directed unions and
  arbitrary intersections.
\end{lemma}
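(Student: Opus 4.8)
The plan is to reduce everything to a single \emph{finitary} criterion for membership in $\widetilde{\mathcal P}$, and then read off both closure properties at once. Since $\mathcal P$ is closed under arbitrary intersections, every subset $S\subseteq X$ has a smallest preconvex superset $[S]_{\mathcal P}:=\bigcap\{P\in{\mathcal P}\mid S\subseteq P\}$, its \emph{$\mathcal P$-hull}; note that $S\subseteq T$ implies $[S]_{\mathcal P}\subseteq[T]_{\mathcal P}$, so in particular $[F]_{\mathcal P}\cup[G]_{\mathcal P}\subseteq[F\cup G]_{\mathcal P}$. First I would prove the claim that $A\in\widetilde{\mathcal P}$ if and only if $[F]_{\mathcal P}\subseteq A$ for every finite $F\subseteq A$. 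For the forward direction, write $A=\bigcup_{i\in I}P_i$ as a directed union of members of $\mathcal P$; any finite $F\subseteq A$ meets finitely many of the $P_i$ and hence, by directedness, lies in a single $P_{i_0}$, so $[F]_{\mathcal P}\subseteq P_{i_0}\subseteq A$. For the converse, if $[F]_{\mathcal P}\subseteq A$ for all finite $F\subseteq A$, then the family $\{[F]_{\mathcal P}\mid F\subseteq A\text{ finite}\}$ consists of members of $\mathcal P$, is directed (via $[F]_{\mathcal P},[G]_{\mathcal P}\mapsto[F\cup G]_{\mathcal P}$), and has union $A$ (as $a\in[\{a\}]_{\mathcal P}\subseteq A$ for each $a\in A$); hence $A\in\widetilde{\mathcal P}$. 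As a by-product this shows that the collection of all directed unions of members of $\mathcal P$ already contains $\mathcal P$ and is closed under directed unions, so it coincides with the closure $\widetilde{\mathcal P}$ and no transfinite iteration is involved.

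With the criterion in hand, the two closure properties are immediate. For closure under directed unions, let $\{A_j\}_{j\in J}\subseteq\widetilde{\mathcal P}$ be directed and let $F\subseteq\bigcup_{j}A_j$ be finite; each point of $F$ lies in some $A_j$, so directedness gives a single $A_{j_0}$ with $F\subseteq A_{j_0}$, and then $[F]_{\mathcal P}\subseteq A_{j_0}\subseteq\bigcup_j A_j$, so $\bigcup_j A_j\in\widetilde{\mathcal P}$ by the criterion. For closure under arbitrary intersections, let $\{A_j\}_{j\in J}\subseteq\widetilde{\mathcal P}$ be arbitrary and let $F\subseteq\bigcap_j A_j$ be finite; then $F\subseteq A_j$ for every $j$, hence $[F]_{\mathcal P}\subseteq A_j$ for every $j$, hence $[F]_{\mathcal P}\subseteq\bigcap_j A_j$, so $\bigcap_j A_j\in\widetilde{\mathcal P}$. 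The cases $X,\emptyset\in\widetilde{\mathcal P}$ are trivial (or fall out as the empty directed union / empty intersection).

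I do not expect a genuine obstacle here; the one point that needs care is recognizing that it is precisely closure of $\mathcal P$ under arbitrary intersections that licenses forming the $\mathcal P$-hull $[F]_{\mathcal P}$ of a finite set, and that this hull operation is the engine making the finitary criterion work. The only other thing to be careful about is the reading of ``closure under directed unions'': the first paragraph's by-product remark handles this by showing one round of directed unions is already idempotent.
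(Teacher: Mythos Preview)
Your proof is correct and takes a genuinely different route from the paper. The paper argues closure under directed unions is ``by definition'' and then, for intersections, writes $\bigcap_{i\in I}P_i$ as the union $\bigcup_f\bigcap_{i\in I}f(i)$ over all choice functions $f$ with $f(i)\in\mathcal D_i$, where $P_i=\bigcup\mathcal D_i$; the directedness of this union is left implicit (it follows because one can take $h(i)=[A\cup B]_{\mathcal P}$ as a common refinement of two choice functions, using that the $\mathcal D_i$ are down-closed). Your approach instead isolates the single finitary criterion $A\in\widetilde{\mathcal P}\iff(\forall\,\text{finite }F\subseteq A)\,[F]_{\mathcal P}\subseteq A$, and then reads off both closure properties symmetrically. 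What this buys you is an explicit verification that one round of directed unions is already idempotent (so the meaning of $\widetilde{\mathcal P}$ is unambiguous), and a choice-free, uniform treatment of intersections without having to juggle choice functions; the paper's argument, by contrast, makes the connection to choice functions visible and is closer in spirit to the distributive-law formula $\bigcap_i\bigcup\mathcal D_i=\bigcup_f\bigcap_i f(i)$, which foreshadows the later partial-sup-lattice viewpoint. Both arguments ultimately rest on the same engine, namely that $\mathcal P$ is closed under arbitrary intersections so the hull $[F]_{\mathcal P}$ exists, and your proposal identifies this point explicitly.
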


\begin{proof}
By definition, $\widetilde{\mathcal P}$ is closed under directed
unions, so we just need to show that it is closed under intersections.
Let $\{P_i|i\in I\}$ be a family of elements of $\widetilde{\mathcal
  P}$. By definition, for every $i\in I$, there is a directed
${\mathcal D}_i\subseteq{\mathcal P}$ with $P_i=\bigcup {\mathcal
  D}_i$. W.l.o.g. assume every ${\mathcal D}_i$ is down-closed in
$\mathcal P$. We will show that
\begin{equation}
\bigcap_{i\in
  I}P_i=\bigcup_{\stackrel{f:I\morphism{}{\mathcal P}}{(\forall i\in
    I)f(i)\in{\mathcal D}_i}}\bigcap_{i\in I} f(i)\label{IntersectionDirected} 
\end{equation}
That is, the intersection of the family $\{P_i|i\in I\}$ is the union
over all choice functions $f$, of the intersection of $\{f(i)|i\in
I\}$. Every $f(i)\in{\mathcal P}$, so this intersection $\bigcap_{i\in I}
f(i)$ is also in $\mathcal P$, and the set of choice functions is
directed, since every ${\mathcal D}_i$ is directed and down-closed, so
for choice functions $f,g:I\morphism{}{\mathcal P}$ the join $(f\lor
g)(i)=f(i)\cup g(i)$ is also a choice function.
Equation~\eqref{IntersectionDirected} therefore shows that $\bigcap_{i\in
  I}P_i\in\widetilde{\mathcal P}$.

To prove Equation~\eqref{IntersectionDirected}, first let $x\in \bigcap_{i\in
  I}P_i$. Since $(\forall i)(x\in P_i)$, and $P_i=\bigcup{\mathcal
  D_i}$, there is some $D_{i,x}\in{\mathcal D}_i$ with $x\in
D_{i,x}$. Thus, we can take the choice function $f_x(i)=D_{i,x}$, and
deduce $x\in \bigcap_{i\in I} f_x(i)$. Conversely, let $$x\in \bigcup_{\stackrel{f:I\morphism{}{\mathcal P}}{(\forall i\in
    I)f(i)\in{\mathcal D}_i}}\bigcap_{i\in I} f(i)$$
There must be some choice function $f$ with $x\in \bigcap_{i\in I}
f(i)$. Since $f(i)\in {\mathcal D}_i$, it follows that $f(i)\subseteq
P_i$, so $x\in P_i$ for every $i\in I$. Thus $x\in \bigcap_{i\in I}P_i$.
\end{proof}

\begin{remark}
The proof of Lemma~\ref{ConvexSetsFromPreconvexity} does not actually
require the axiom of choice, because there are canonical choices for
all choice functions needed --- for each $P_i$, we can let ${\mathcal
  D}_i=\{P\in{\mathcal P}|P\subseteq P_i\}$, and since every ${\mathcal
  D}_i$ is a downset, we can set $D_{i,x}=\overline{\{x\}}$ for every
$i\in I$, where $\overline{\{x\}}$ is the convex-closed closure of
$\{x\}$.
\end{remark}

\begin{lemma}
Every $F\in\overline{\mathcal P}$ is of the form $\bigcap{\mathcal
  F}$, where $${\mathcal F}\subseteq\{P_1\cup\cdots\cup
P_n|P_1,\ldots,P_n\in {\mathcal P}\}$$
\end{lemma}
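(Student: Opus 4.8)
The plan is to characterize $\overline{\mathcal P}$ as the smallest collection of subsets of $X$ containing $\mathcal P$ and closed under finite unions and arbitrary intersections, and then show that the collection of arbitrary intersections of finite unions of members of $\mathcal P$ already has this closure property. Write $\mathcal U = \{P_1\cup\cdots\cup P_n \mid P_1,\ldots,P_n\in\mathcal P\}$ for the finite unions (including the empty union $\emptyset$, and noting $X\in\mathcal P\subseteq\mathcal U$), and let $\mathcal G = \{\bigcap\mathcal F \mid \mathcal F\subseteq\mathcal U\}$ be the arbitrary intersections of such sets (including the empty intersection $X$). Since $\mathcal P\subseteq\mathcal U\subseteq\mathcal G$ and $\overline{\mathcal P}$ is by definition the closure of $\mathcal P$ under finite unions and arbitrary intersections, we certainly have $\mathcal G\subseteq\overline{\mathcal P}$. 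So the content is the reverse inclusion, for which it suffices to prove that $\mathcal G$ is itself closed under finite unions and arbitrary intersections.

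Closure of $\mathcal G$ under arbitrary intersections is immediate: an intersection of sets each of the form $\bigcap\mathcal F_i$ with $\mathcal F_i\subseteq\mathcal U$ is $\bigcap(\bigcup_i\mathcal F_i)$, and $\bigcup_i\mathcal F_i\subseteq\mathcal U$. The real work is closure under binary union (hence, by induction, finite union). Given $G = \bigcap\mathcal F$ and $G' = \bigcap\mathcal F'$ with $\mathcal F,\mathcal F'\subseteq\mathcal U$, the distributive law for sets gives
\[
G\cup G' = \Big(\bigcap_{U\in\mathcal F}U\Big)\cup\Big(\bigcap_{V\in\mathcal F'}V\Big) = \bigcap_{U\in\mathcal F}\ \bigcap_{V\in\mathcal F'}(U\cup V),
\]
so it remains only to observe that each $U\cup V$ is again a finite union of members of $\mathcal P$, i.e.\ lies in $\mathcal U$ — which is clear since $\mathcal U$ is closed under binary union by its very definition. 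Hence $G\cup G'\in\mathcal G$.

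The main obstacle, such as it is, is purely bookkeeping: one must be careful that the ``finite unions'' include the empty union $\emptyset$ (so that $\emptyset\in\mathcal U$, as required since $\emptyset\in\mathcal P$ anyway) and that the ``arbitrary intersections'' include the empty intersection $X$, so that $\mathcal G$ genuinely contains $\mathcal P$ and is closed under the nullary cases of both operations; and one should check the set-theoretic distributive law used above, which is the only nontrivial identity involved. Once these are in place, the equality $\overline{\mathcal P} = \mathcal G$ follows, and this is exactly the assertion of the lemma: every $F\in\overline{\mathcal P}$ has the form $\bigcap\mathcal F$ with $\mathcal F\subseteq\mathcal U = \{P_1\cup\cdots\cup P_n \mid P_1,\ldots,P_n\in\mathcal P\}$.
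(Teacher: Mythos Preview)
Your proof is correct and follows essentially the same route as the paper's: both define the collection of arbitrary intersections of finite unions from $\mathcal P$ and show it is already closed under binary unions via the distributive identity $\left(\bigcap_{U\in\mathcal F}U\right)\cup\left(\bigcap_{V\in\mathcal F'}V\right)=\bigcap_{U,V}(U\cup V)$, with $U\cup V$ again a finite union from $\mathcal P$. The paper verifies this identity by a direct element-chase, whereas you invoke the distributive law and are a bit more explicit about the nullary cases, but the argument is the same.
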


\begin{proof}
  Let $\widehat{\mathcal P}=\{P_1\cup\cdots\cup
P_n|P_1,\ldots,P_n\in {\mathcal P}\}$ be the set of finite unions from
$\mathcal P$. We need to show that the set $\left\{\bigcap {\mathcal
  F}|{\mathcal F}\subseteq \widehat{\mathcal P}\right\}$ is closed
under finite unions. (By definition, it is closed under
arbitrary intersections.) Let $F_1=\bigcap {\mathcal F}_1$ and
$F_2=\bigcap {\mathcal F}_2$ for ${\mathcal F}_1,{\mathcal
  F}_2\subseteq\widehat{\mathcal P}$. Let ${\mathcal F}_{12}=\{P_1\cup
P_2|P_1\in{\mathcal F}_1,P_2\in{\mathcal F}_2\}$. We will show that
$F_1\cup F_2=\bigcap {\mathcal F}_{12}$. Clearly, for every
$P_1\in{\mathcal F}_1$, and $P_2\in{\mathcal F}_2$, we have
$F_1\subseteq P_1$ and $F_2\subseteq P_2$, so $F_1\cup F_2\subseteq
P_1\cup P_2$. Conversely, suppose $x\not\in F_1\cup F_2$. Then there
is some $P_1\in{\mathcal F}_1$ and some $P_2\in{\mathcal F}_2$ with
$x\not\in P_1$ and $x\not\in P_2$. It follows that $x\not\in P_1\cup
P_2\in{\mathcal F}_{12}$, so $x\not\in\bigcap{\mathcal F}_{12}$. 
\end{proof}

\begin{lemma}
 \
  \begin{enumerate}
    \item For a set $X$, the identity function on $X$ is a
      preconvexity homomorphism $(X,{\mathcal
        P})\morphism{}(X,{\mathcal P}')$ if and only if ${\mathcal
        P'}\subseteq {\mathcal P}$.

    \item For a set $X$, the identity function on $X$ is a
      topological convexity homomorphism $(X,{\mathcal
        F},{\mathcal C})\morphism{}(X,{\mathcal F}',{\mathcal C}')$ if and only if ${\mathcal
        F'}\subseteq {\mathcal F}$ and ${\mathcal
        C'}\subseteq{\mathcal C}$.

  \end{enumerate}
      
  \end{lemma}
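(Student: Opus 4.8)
The plan is to prove both parts by unwinding the definitions of the two kinds of homomorphism in the special case where the underlying function is the identity, and observing that ``inverse image'' of a set under the identity is just the set itself.

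\medskip

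\textbf{Part 1.} Recall that $\id_X:(X,{\mathcal P})\morphism{}(X,{\mathcal P}')$ is a preconvexity homomorphism precisely when $\id_X^{-1}(P)\in{\mathcal P}$ for every $P\in{\mathcal P}'$. Since $\id_X^{-1}(P)=P$, this condition says exactly: for every $P\in{\mathcal P}'$, $P\in{\mathcal P}$, i.e. ${\mathcal P}'\subseteq{\mathcal P}$. First I would state the equivalence ``$\id_X^{-1}(P)=P$'', then read off both directions of the biconditional directly. There is genuinely nothing to check beyond this; it is a one-line verification.

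\medskip

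\textbf{Part 2.} By the definition of a topological convexity homomorphism, $\id_X:(X,{\mathcal F},{\mathcal C})\morphism{}(X,{\mathcal F}',{\mathcal C}')$ is a homomorphism iff $\id_X^{-1}(F)\in{\mathcal F}$ for every $F\in{\mathcal F}'$ \emph{and} $\id_X^{-1}(C)\in{\mathcal C}$ for every $C\in{\mathcal C}'$. As before $\id_X^{-1}(F)=F$ and $\id_X^{-1}(C)=C$, so the first conjunct is equivalent to ${\mathcal F}'\subseteq{\mathcal F}$ and the second to ${\mathcal C}'\subseteq{\mathcal C}$; the conjunction of the two conditions is therefore equivalent to the conjunction ${\mathcal F}'\subseteq{\mathcal F}$ and ${\mathcal C}'\subseteq{\mathcal C}$. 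Again, both implications are immediate from this rewriting, with no side conditions needed (the closure properties of ${\mathcal F},{\mathcal F}',{\mathcal C},{\mathcal C}'$ play no role here).

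\medskip

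There is no real obstacle: the statement is essentially a reformulation of the homomorphism conditions, and the only ``content'' is the trivial identity $\id_X^{-1}(A)=A$. The main point worth making explicit in the writeup is that this lemma is what lets one speak of the ``finest'' or ``coarsest'' structure in a fibre and will be used tacitly when comparing structures on a fixed underlying set later in the paper, so I would keep the proof short but phrase it so that the set-inclusion characterisation is clearly isolated for reuse.
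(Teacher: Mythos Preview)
Your proof is correct; the paper in fact states this lemma without proof, treating it as immediate from the definitions, which is exactly the approach you take. Your observation that $\id_X^{-1}(A)=A$ is the only content needed, and both parts follow at once.
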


\begin{proof}
This is immediate from the definition.
\end{proof}

\begin{proposition}\label{AdjointTCPreconvex}
  The assignment $IS$ that sends the preconvexity space $(X,{\mathcal
    P})$ to the topological convexity space $(X,\overline{\mathcal
    P},\widetilde{\mathcal P})$ and the preconvexity homomorphism
  $(X,{\mathcal P})\morphism{f}(X',{\mathcal P}')$ to $f$ considered
  as a topological convexity homomorphism, is a functor, and is right
  adjoint to the functor $CC:\TC\morphism{}\Preconvex$.
\end{proposition}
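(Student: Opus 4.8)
The plan is to verify the two things the statement claims: first that $IS$ is well-defined as a functor, and second that it is right adjoint to $CC$. For functoriality, I would start from Lemma~\ref{ConvexSetsFromPreconvexity}, which already tells us $\widetilde{\mathcal P}$ is closed under directed unions and arbitrary intersections, so $\widetilde{\mathcal P}$ is a legitimate convexity; the preceding lemma tells us $\overline{\mathcal P}$ is closed under finite unions and arbitrary intersections, so it is a legitimate topology. Hence $IS(X,{\mathcal P}) = (X,\overline{\mathcal P},\widetilde{\mathcal P})$ is a genuine topological convexity space. For a preconvexity homomorphism $f:(X,{\mathcal P})\morphism{}(X',{\mathcal P}')$, I must check $f^{-1}$ sends $\overline{\mathcal P'}$ into $\overline{\mathcal P}$ and $\widetilde{\mathcal P'}$ into $\widetilde{\mathcal P}$. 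This is routine: $f^{-1}$ commutes with arbitrary unions and intersections, so it sends a finite union of sets from ${\mathcal P'}$ to a finite union of their preimages (each in ${\mathcal P}$ since $f$ is a preconvexity homomorphism), and similarly an intersection of such; the same argument with directed unions handles $\widetilde{\mathcal P'}$. Functoriality in the identity and composition is then immediate since $IS$ acts as the identity on underlying functions.

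Next I would set up the adjunction $CC \dashv IS$. The cleanest route is to exhibit the unit and counit and check the triangle identities, or — since both functors act as the identity on underlying sets and functions — to directly establish the natural bijection
\[
\Hom_{\Preconvex}\big(CC(X,{\mathcal F},{\mathcal C}),\,(Y,{\mathcal P})\big)\;\cong\;\Hom_{\TC}\big((X,{\mathcal F},{\mathcal C}),\,IS(Y,{\mathcal P})\big).
\]
Both hom-sets are subsets of the set of functions $X\morphism{}Y$, so it suffices to show they are the \emph{same} subset. A function $f:X\morphism{}Y$ lies in the left-hand set iff $f^{-1}(P)\in{\mathcal F}\cap{\mathcal C}$ for every $P\in{\mathcal P}$. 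It lies in the right-hand set iff $f^{-1}(F')\in{\mathcal F}$ for every $F'\in\overline{\mathcal P}$ and $f^{-1}(C')\in{\mathcal C}$ for every $C'\in\widetilde{\mathcal P}$. The forward implication (right-hand $\Rightarrow$ left-hand) is trivial since ${\mathcal P}\subseteq\overline{\mathcal P}$ and ${\mathcal P}\subseteq\widetilde{\mathcal P}$, and ${\mathcal F}\cap{\mathcal C}$ receives both conditions. For the reverse implication, suppose $f^{-1}(P)\in{\mathcal F}\cap{\mathcal C}$ for all $P\in{\mathcal P}$; I need $f^{-1}(F')\in{\mathcal F}$ for $F'\in\overline{\mathcal P}$ and $f^{-1}(C')\in{\mathcal C}$ for $C'\in\widetilde{\mathcal P}$. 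Using the two structural lemmas above, every $F'\in\overline{\mathcal P}$ is $\bigcap{\mathcal G}$ for ${\mathcal G}$ a family of finite unions from ${\mathcal P}$, so $f^{-1}(F')$ is an arbitrary intersection of finite unions of sets $f^{-1}(P)\in{\mathcal F}$, hence in ${\mathcal F}$ since ${\mathcal F}$ is closed under those operations; and every $C'\in\widetilde{\mathcal P}$ is a directed union $\bigcup{\mathcal D}$ with ${\mathcal D}\subseteq{\mathcal P}$, so $f^{-1}(C')=\bigcup_{D\in{\mathcal D}}f^{-1}(D)$ is a directed union of sets $f^{-1}(D)\in{\mathcal C}$, hence in ${\mathcal C}$. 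This gives the bijection, and naturality in both variables is automatic because everything is built from inverse-image and the bijection is literally an equality of subsets of the function set.

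The main obstacle — such as it is — is purely bookkeeping: making sure the closure operations $\overline{(-)}$ and $\widetilde{(-)}$ interact correctly with $f^{-1}$, which is exactly what the two preparatory lemmas (the description of $\overline{\mathcal P}$ as intersections of finite unions, and Lemma~\ref{ConvexSetsFromPreconvexity} giving $\widetilde{\mathcal P}$ as closed under the right operations) are designed to handle. One subtlety worth spelling out is why $f^{-1}(P)$ needs to land in ${\mathcal F}\cap{\mathcal C}$ and not merely in ${\mathcal C}$: this is because $CC(X,{\mathcal F},{\mathcal C})$ has underlying preconvexity ${\mathcal F}\cap{\mathcal C}$, so a preconvexity homomorphism out of it pulls $P\in{\mathcal P}$ back into ${\mathcal F}\cap{\mathcal C}$ — and both halves of that membership are precisely what is needed to then feed the topology-closure and convexity-closure arguments on the $\TC$ side. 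Finally I would remark that since $IS$ is fully faithful on the image and the unit/counit can be read off, the adjunction is in fact the first half of the claimed idempotent adjunction, though proving idempotency is deferred.
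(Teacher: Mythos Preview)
Your proposal is correct and follows essentially the same approach as the paper: both arguments reduce the adjunction to showing that a function $f:X\to X'$ is a preconvexity homomorphism $(X,{\mathcal F}\cap{\mathcal C})\to(X',{\mathcal P}')$ if and only if it is a topological convexity homomorphism $(X,{\mathcal F},{\mathcal C})\to(X',\overline{\mathcal P'},\widetilde{\mathcal P'})$, and both use the two structural lemmas on $\overline{\mathcal P}$ and $\widetilde{\mathcal P}$ to carry the nontrivial direction. The one organisational difference is that you verify well-definedness of $IS$ on morphisms directly before turning to the adjunction, whereas the paper first establishes the hom-set bijection and then extracts well-definedness from it by precomposing $f$ with the identity map $(X,\overline{\mathcal P}\cap\widetilde{\mathcal P})\to(X,{\mathcal P})$ --- a slight economy, since the same inverse-image computation then does double duty.
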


\begin{proof}
Because the forgetful functor to \Set\ sends $IS$ to the identity
functor, the functoriality of $IS$ is automatic provided it is
well-defined. That is, if any preconvexity homomorphism $(X,{\mathcal
  P})\morphism{f}(X',{\mathcal P}')$ is a topological convexity
homomorphism from $(X,\overline{\mathcal
    P},\widetilde{\mathcal P})$ to $(X',\overline{\mathcal
    P'},\widetilde{\mathcal P'})$. For the adjunction, we need to
demonstrate that for any topological convexity space $(X,{\mathcal
  F},{\mathcal C})$ and any preconvexity space $(X',{\mathcal P}')$, a
function $f:X\morphism{}X'$ is a topological convexity space
homomorphism $(X,{\mathcal
  F},{\mathcal C})\morphism{f}(X',\overline{\mathcal
    P'},\widetilde{\mathcal P'})$ if and only if it is a preconvexity
homomorphism $(X,{\mathcal
  F}\cap{\mathcal C})\morphism{f}(X',{\mathcal P'})$. The ``only if'' part
is obvious.

Suppose $(X,{\mathcal F}\cap{\mathcal C})\morphism{f}(X',{\mathcal
  P'})$ is a preconvexity homomorphism. Let $F\in\overline{\mathcal
  P'}$. We want to show that $f^{-1}(F)\in{\mathcal F}$. Now
$F\in\overline{\mathcal P'}$ means $F=\bigcap{\mathcal U}$ where
${\mathcal U}\subseteq \widehat{\mathcal P'}$. Now if
$P_1\cup\cdots\cup P_n\in\widehat{\mathcal P'}$, then
$f^{-1}(P_1\cup\cdots\cup P_n)=f^{-1}(P_1)\cup\cdots\cup f^{-1}(P_n)$
is a finite union of sets from ${\mathcal F}\cap{\mathcal C}$, so
since ${\mathcal F}$ is closed under finite unions,
$f^{-1}(P_1\cup\cdots\cup P_n)\in{\mathcal F}$. Therefore
$f^{-1}(F)=\bigcap\{f^{-1}U|U\in{\mathcal U}\}$ and
$\{f^{-1}U|U\in{\mathcal U}\}\subseteq \mathcal F$, so as $\mathcal F$
is closed under arbitrary intersections, $f^{-1}(F)\in{\mathcal
  F}$. Similarly, let $C=\bigcup {\mathcal D}$, where ${\mathcal
  D}\subseteq {\mathcal P}'$ is a directed downset. For every
$D\in{\mathcal D}$, we have $f^{-1}(D)\in\mathcal C$, and for any
$D_1,D_2\in\mathcal D$, there is some $D_{12}\in\mathcal D$ with
$D_1\subseteq D_{12}$ and $D_2\subseteq D_{12}$. It follows that
$f^{-1}(D_1)\subseteq f^{-1}(D_{12})$ and $f^{-1}(D_2)\subseteq
f^{-1}(D_{12})$. Therefore, $\{f^{-1}(D)|D\in{\mathcal D}\}$ is
directed. Now $f^{-1}(C)=f^{-1}(\bigcup {\mathcal D})=\bigcup
\{f^{-1}(D)|D\in{\mathcal D}\}$. Since $\{f^{-1}(D)|D\in{\mathcal
  D}\}\subseteq \mathcal C$, and $\mathcal C$ is closed under directed
unions, it follows that $f^{-1}(C)\in{\mathcal C}$. Thus $f$ is a
homomorphism of topological convexity spaces.

Well-definedness of the functor $IS$ also follows from the
adjunction, because ${\mathcal P}\subseteq\overline{\mathcal P}\cap\widetilde{\mathcal
  P}$, so the identity function on $X$ is always a preconvexity
homomorphism $(X,\overline{\mathcal P}\cap\widetilde{\mathcal
  P})\morphism{i}(X,{\mathcal P})$. Thus the composite
$$(X,\overline{\mathcal P}\cap\widetilde{\mathcal
  P})\morphism{i}(X,{\mathcal P})\morphism{f}(X',{\mathcal P}')$$
is a preconvexity homomorphism, so by the adjunction, it is a
topological convexity space homomorphism 
$(X,\overline{\mathcal P},\widetilde{\mathcal
  P})\morphism{f}(X',\overline{{\mathcal P}'},\widetilde{{\mathcal P}'})$

\end{proof}

\begin{corollary}\label{IdempotentTCPreconvex}
The adjunction $CC\dashv IS$ is idempotent.
\end{corollary}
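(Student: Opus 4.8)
The plan is to show that the adjunction $CC \dashv IS$ is idempotent by verifying the standard criterion: an adjunction $F \dashv G$ with unit $\eta$ and counit $\varepsilon$ is idempotent if and only if $G\varepsilon$ is an isomorphism, equivalently $\eta G$ is an isomorphism, equivalently $F\eta$ is an isomorphism. Since the forgetful functors to \Set\ take all the spaces involved to their underlying sets and take $CC$, $IS$, and all the unit/counit components to identity functions, every component of $\eta$ and $\varepsilon$ is carried by an identity function on the underlying set. So the question reduces entirely to comparing the families of closed, convex, and preconvex sets before and after applying the relevant composite.

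\medskip

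First I would identify the unit and counit explicitly. From the proof of Proposition~\ref{AdjointTCPreconvex}, the counit $\varepsilon_{(X,\mathcal P)} : CC(IS(X,\mathcal P)) \morphism{} (X,\mathcal P)$ is the identity function viewed as a preconvexity homomorphism $(X, \overline{\mathcal P} \cap \widetilde{\mathcal P}) \morphism{} (X, \mathcal P)$, which is well-defined precisely because $\mathcal P \subseteq \overline{\mathcal P} \cap \widetilde{\mathcal P}$. The unit $\eta_{(X,\mathcal F, \mathcal C)} : (X,\mathcal F, \mathcal C) \morphism{} IS(CC(X, \mathcal F, \mathcal C))$ is the identity viewed as a topological convexity homomorphism $(X, \mathcal F, \mathcal C) \morphism{} (X, \overline{\mathcal F \cap \mathcal C}, \widetilde{\mathcal F \cap \mathcal C})$, well-defined since $\overline{\mathcal F \cap \mathcal C} \subseteq \mathcal F$ (as $\mathcal F$ is closed under finite unions and arbitrary intersections and contains $\mathcal F \cap \mathcal C$) and $\widetilde{\mathcal F \cap \mathcal C} \subseteq \mathcal C$ (as $\mathcal C$ is closed under directed unions).

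\medskip

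I would then prove that $CC\varepsilon$ is an isomorphism. Concretely, for a preconvexity space $(X,\mathcal P)$, I need $CC(\varepsilon_{(X,\mathcal P)})$ to be an isomorphism in \Preconvex, i.e.\ the identity function to be an iso $(X, \overline{\overline{\mathcal P} \cap \widetilde{\mathcal P}} \cap \widetilde{\overline{\mathcal P} \cap \widetilde{\mathcal P}}) \morphism{} (X, \overline{\mathcal P} \cap \widetilde{\mathcal P})$. By the lemma on identity preconvexity homomorphisms, this amounts to showing the two families of preconvex sets coincide. The key computational fact needed is that $\overline{\mathcal P} \cap \widetilde{\mathcal P}$ "generates the same data": one shows $\overline{\overline{\mathcal P} \cap \widetilde{\mathcal P}} = \overline{\mathcal P}$ and $\widetilde{\overline{\mathcal P} \cap \widetilde{\mathcal P}} = \widetilde{\mathcal P}$, using the structural descriptions of $\overline{\mathcal P}$ (arbitrary intersections of finite unions from $\mathcal P$) and $\widetilde{\mathcal P}$ (directed unions from $\mathcal P$) established in the preceding lemmas, together with Lemma~\ref{ConvexSetsFromPreconvexity}. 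Intersecting gives $\overline{\mathcal P} \cap \widetilde{\mathcal P}$ again, so the counit is already an isomorphism, hence so is $CC\varepsilon$.

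\medskip

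The main obstacle I anticipate is exactly that last pair of set-family identities, in particular checking that closing $\overline{\mathcal P} \cap \widetilde{\mathcal P}$ under finite unions and arbitrary intersections recovers no more than $\overline{\mathcal P}$, and dually for $\widetilde{(\cdot)}$. The inclusions $\supseteq$ are immediate since $\mathcal P \subseteq \overline{\mathcal P} \cap \widetilde{\mathcal P}$; the reverse inclusions require showing that a finite union of sets, each of which is simultaneously an intersection of finite unions from $\mathcal P$ \emph{and} a directed union from $\mathcal P$, can itself be rewritten as an intersection of finite unions from $\mathcal P$ — which follows from the distributive manipulation in the lemma computing $\overline{\mathcal P}$, since every element of $\overline{\mathcal P} \cap \widetilde{\mathcal P}$ is in particular in $\overline{\mathcal P}$. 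Once these identities are in hand, idempotency is immediate from the criterion, since an identity morphism is trivially an isomorphism, so I would close by invoking the standard characterization of idempotent adjunctions.
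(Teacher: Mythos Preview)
Your approach is correct in substance but takes a considerably longer route than the paper. The paper's proof is essentially two lines: since both the unit and the counit are carried by identity functions on underlying sets, and the forgetful functors to \Set\ are faithful, the triangle identity $IS\,\varepsilon \circ \eta\, IS = 1$ forces the reverse composite $\eta\, IS \circ IS\,\varepsilon$ to be the identity as well (it is an endomorphism whose underlying function is the identity), so $\eta\, IS$ and $IS\,\varepsilon$ are mutually inverse isomorphisms. No computation of $\overline{\overline{\mathcal P}\cap\widetilde{\mathcal P}}$ or $\widetilde{\overline{\mathcal P}\cap\widetilde{\mathcal P}}$ is needed at all.

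Your explicit verification that those two closures equal $\overline{\mathcal P}$ and $\widetilde{\mathcal P}$ is valid and does yield idempotency, but watch the bookkeeping. First, ``$CC\varepsilon$'' is not well-typed: $\varepsilon$ is a morphism in $\Preconvex$, while $CC$ has domain $\TC$. What you actually compute is $\varepsilon_{CC\circ IS(X,\mathcal P)}$, i.e.\ the counit at an object in the image of $CC\circ IS$; showing this is an isomorphism is indeed one of the equivalent characterisations of idempotency (it says the comonad $CC\circ IS$ is idempotent). Second, ``the counit is already an isomorphism'' is too strong as a blanket statement: $\varepsilon_{(X,\mathcal P)}$ is an isomorphism only for geometric $(X,\mathcal P)$, and the paper spends the rest of the section analysing exactly when that happens. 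What your computation shows is the weaker (but sufficient) fact that $\varepsilon$ is an isomorphism on the image of $CC\circ IS$. The paper's triangle-identity argument bypasses all of this.
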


\begin{proof}
The counit and unit of the adjunction are both the identity function
viewed as a homomorphism in the relevant category. The triangle
identities for the adjunction therefore give an isomorphism of spaces,
showing that the adjunction is idempotent.
\end{proof}

For an idempotent adjunction, a natural question is what are the fixed
points?

\begin{proposition}\label{Teetotal}

A topological convexity space $X=(X,{\mathcal F},{\mathcal C})$
satisfies $IS\circ CC(X)=X$ if and only if $X$ satisfies the conditions:
  
\begin{enumerate}
  
\item\label{ConvexSets} Every convex set is a directed union of closed convex sets.

\item\label{ClosedSets} For every $V\in\mathcal F$ and any $x\in
  X\setminus V$, there are sets $C_1,\ldots,C_n\in{\mathcal
    F}\cap{\mathcal C}$ such that $V\subseteq C_1\cup\ldots\cup\C_n$
  and $x\not\in C_1\cup\ldots\cup\C_n$.

\end{enumerate}

\end{proposition}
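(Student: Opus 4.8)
The plan is to unwind both functors and reduce the statement to two set-theoretic equalities. By definition $CC(X)=(X,{\mathcal F}\cap{\mathcal C})$, and applying $IS$ to this preconvexity space yields $(X,\overline{{\mathcal F}\cap{\mathcal C}},\widetilde{{\mathcal F}\cap{\mathcal C}})$. Writing ${\mathcal P}={\mathcal F}\cap{\mathcal C}$ for brevity, the claim $IS\circ CC(X)=X$ is therefore precisely the pair of equalities $\widetilde{\mathcal P}={\mathcal C}$ and $\overline{\mathcal P}={\mathcal F}$, each an equality of collections of subsets of the fixed set $X$, so no morphism-level bookkeeping is needed once these are established.

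For the convex sets: the inclusion $\widetilde{\mathcal P}\subseteq{\mathcal C}$ holds in every topological convexity space, since each member of $\mathcal P$ is convex and $\mathcal C$ is closed under directed unions, hence $\mathcal C$ contains the directed-union closure of $\mathcal P$. The reverse inclusion ${\mathcal C}\subseteq\widetilde{\mathcal P}$ is exactly condition~\ref{ConvexSets}: every convex set is a directed union of closed convex sets, i.e.\ of members of $\mathcal P$, hence lies in $\widetilde{\mathcal P}$.

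For the closed sets: the inclusion $\overline{\mathcal P}\subseteq{\mathcal F}$ is again automatic, because each member of $\mathcal P$ is closed and $\mathcal F$ is closed under finite unions and arbitrary intersections, so $\mathcal F$ contains the closure of $\mathcal P$ under these operations. For ${\mathcal F}\subseteq\overline{\mathcal P}$, let $V\in{\mathcal F}$. Condition~\ref{ClosedSets} supplies, for each $x\in X\setminus V$, sets $C^x_1,\ldots,C^x_{n_x}\in{\mathcal P}$ with $V\subseteq U_x:=C^x_1\cup\cdots\cup C^x_{n_x}$ and $x\notin U_x$. Each $U_x$ is a finite union of members of $\mathcal P$, hence lies in $\overline{\mathcal P}$ (this is also the explicit description in the lemma preceding this proposition). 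I then claim $V=\bigcap_{x\in X\setminus V}U_x$: the inclusion $\subseteq$ is immediate since $V\subseteq U_x$ for every $x$, and for $\supseteq$, if $y\notin V$ then $y\notin U_y$, so $y\notin\bigcap_{x}U_x$. Thus $V$ is an arbitrary intersection of members of $\overline{\mathcal P}$, so $V\in\overline{\mathcal P}$ by closure of $\overline{\mathcal P}$ under intersections.

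Combining, $(X,\overline{\mathcal P},\widetilde{\mathcal P})=(X,{\mathcal F},{\mathcal C})$, which is the assertion. There is no substantial obstacle here; the only points deserving a moment of care are that the closure operations in $IS$ do not overshoot the original structures --- which is guaranteed by the closure properties of $\mathcal F$ and $\mathcal C$ together with Lemma~\ref{ConvexSetsFromPreconvexity} --- and the (entirely routine) verification of the set equality $V=\bigcap_{x\notin V}U_x$ above.
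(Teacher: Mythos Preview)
Your proof is correct and follows essentially the same approach as the paper's: both establish the two inclusions $\widetilde{\mathcal P}\subseteq{\mathcal C}$ and $\overline{\mathcal P}\subseteq{\mathcal F}$ from the closure properties of $\mathcal C$ and $\mathcal F$ (the paper phrases this via the counit of the adjunction), and then obtain the reverse inclusions directly from Conditions~\ref{ConvexSets} and~\ref{ClosedSets}, the latter by intersecting the finite unions $U_x$ over all $x\notin V$. Your write-up is in fact slightly more explicit than the paper's in spelling out the equality $V=\bigcap_{x\notin V}U_x$.
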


\begin{proof}
  The counit of the adjunction is the identity function on the
  underlying sets. Thus $\overline{\left({\mathcal F}\cap{\mathcal
      C}\right)}\subseteq {\mathcal F}$ and
  $\widetilde{\left({\mathcal F}\cap{\mathcal C}\right)}\subseteq
  {\mathcal C}$. Let $A\in{\mathcal C}$ be convex in $X$. By
  Condition~1, $A$ is a directed union of sets in ${\mathcal
    F}\cap{\mathcal C}$. By definition, this is in
  $\widetilde{\left({\mathcal F}\cap{\mathcal C}\right)}$.

  Now let $V\in{\mathcal F}$. For any $W=C_1\cup\cdots\cup C_n$ with
  $C_i\in{\mathcal F}\cap{\mathcal C}$, $W\in\overline{\left({\mathcal
      F}\cap{\mathcal C}\right)}$ by definition. Thus, by Condition~2,
  for every $x\in X\setminus V$, there is some
  $W\in\overline{\left({\mathcal F}\cap{\mathcal C}\right)}$ with
  $V\subseteq W$ and $x\not\in W$. Now, clearly $V$ is the
  intersection of all these $W$ for all $x\not\in V$. Since
  $\overline{\left({\mathcal F}\cap{\mathcal C}\right)}$ is closed
  under arbitrary intersections, this implies $V\in
  \overline{\left({\mathcal F}\cap{\mathcal C}\right)}$.

  Conversely, if $X$ is a fixed point of the adjunction, i.e. $IS\circ
  CC(X)=X$, then ${\mathcal C}=\widetilde{(F\cap C)}$, which is exactly
  Condition~1. Also ${\mathcal F}=\overline{(F\cap C)}$, meaning that
  for every $V\in{\mathcal F}$, we have $V=\bigcap {\mathcal U}$ where
  $\mathcal U$ is a family of finite unions of sets from $\mathcal
  F\cap\mathcal C$. Since $V=\bigcap {\mathcal U}$, for any $x\not\in
  V$, there is some $U\in{\mathcal U}$ with $x\not\in U$. By
  definition, $U=C_1\cup\cdots\cup C_n$ for some $C_1,\ldots,C_n\in
  {\mathcal F} \cap{\mathcal C}$, which is Condition~2. 
\end{proof}

We will call a topological convexity space {\em teetotal} if the
conditions of Proposition~\ref{Teetotal} hold. The teetotal conditions
are closely related to the compatible conditions from
Definition~\ref{DefCompatible}. However, there are compatible spaces
which are not teetotal.

\begin{example}
  $l^2$ is the vector-space of square-summable sequences of real
  numbers, with the $l^2$ norm. Since $l^2$ is a metric space, it is
  easy to check that it is a compatible topological convexity space.

  Let $F$ be the unit sphere, which is a closed set, and let $x=0$. In
  order for $l^2$ to be teetotal, we need to find a finite family of
  closed convex subsets $C_1,\ldots,C_n$ such that $F\subseteq
  C_1\cup\cdots\cup C_n$ and $x\not\in C_1\cup\cdots\cup C_n$.  For
  closed convex $C_i$ and $x\not\in C_i$, since $C_i$ is closed, there
  is an open ball containing $x$ disjoint from $C_i$. Let
  $d=\sup\{r\in{\mathbb R}|B(x,r)\cap C_i=\emptyset\}$ be the distance
  from $x$ to $C_i$. Since $B(x,d)$ is the directed union of
  $\{B(x,r)|r<d\}$, it follows that $B(x,d)\cap C_i=\emptyset$.

  We first show that if $C$ is a closed convex set that does not
  contain 0, then there is a unique $x\in C$ that minimises $\lVert
  x\rVert$. If there is no $x\in C$ that minimises $\lVert x\rVert$,
  then there must be a sequence $a_1,a_2,\ldots \in C$ such that
  $\lVert a_i\rVert$ is strictly decreasing and
  $$\lim_{n\rightarrow\infty}\lVert a_n\rVert=\inf_{y\in C}\lVert
  y\rVert$$
  Since $[a_1,\ldots,a_n]$ is compact for every $n$, there is a point
  $b_n\in[a_1,\ldots,a_n]$ that minimises $\lVert b\rVert$. In
  particular, this means that for any $i<n$ and any $\epsilon>0$,
  $\lVert b_n+\epsilon(b_i-b_n)\rVert\geqslant\lVert b_n\rVert$. Squaring both
  sides gives $$2\epsilon\langle b_i,b_n\rangle-2\epsilon\langle b_n,b_n\rangle+
\epsilon^2\langle b_i-b_n,b_i-b_n\rangle>0$$
Taking the limit as $\epsilon\rightarrow 0$ gives $\langle
b_i,b_n\rangle>\langle b_n,b_n\rangle$. Thus
\begin{align*}
  \lVert b_i-b_n\rVert^2&=\lVert b_i\rVert^2+\lVert
  b_n\rVert^2-2\langle b_i,b_n\rangle\\
  &\leqslant \lVert b_i\rVert^2-\lVert
  b_n\rVert^2
\end{align*}
Since $\lVert b_n\rVert^2$ is a decreasing sequence, bounded below by
0, it converges to some limit $r$. Thus $\lVert
b_i-b_n\rVert^2\leqslant \lVert b_i\rVert^2-r$ for any $i<n$. Thus
$b_n$ is a Cauchy sequence, so it converges to some limit
$b_\infty$. Now since $C$ is closed, $b_\infty\in C$, and
$$\lVert b_\infty\rVert=\lim_{n\rightarrow\infty} \lVert
b_n\rVert=\inf_{y\in C}\lVert y\rVert$$
Thus $b_\infty$ is a nearest point in $C$ to $0$. If $x$ is another
point with minimal norm, then $\frac{x+b_\infty}{2}$ must have smaller
norm. Thus $b_\infty$ is the unique point with smallest norm.

Now for any $y\in C$, since $C$ is convex, we have that $\lVert
b_\infty+\epsilon(y-b_\infty)\rVert>\lVert b_\infty\rVert$, and by the
above argument, $\langle y,b_\infty \rangle\geqslant \langle
b_\infty,b_\infty \rangle$. Thus $C\subseteq \{x\in l^2|\langle
x,b_\infty\rangle>\frac{1}{2}\lVert b_\infty\rVert^2\}$. That is, every closed
convex set is contained in a half-space. 

We can therefore find half-spaces $H_1,\ldots,H_n$ with $x\not\in H_i$
and $C_i\subseteq H_i$. Thus, we may assume that $F\subseteq
H_1\cup\cdots\cup H_n$. Half-spaces that do not contain the origin are
sets of the form $H_{w,a}=\{v\in l^2|\langle v,w\rangle>a\}$ for some
$w\in l^2$ and $a\in\mathbb R^+$. Given a finite family $H_1,\ldots,
H_n=H_{w_1,a_1},\ldots,H_{w_n,a_n}$, we can find a unit vector $w$
that is orthogonal to all of $w_1,\ldots,w_n$. This means that
$w\not\in H_i$ for all $i$, and $w\in F$, contradicting the assumption
that $F\subseteq H_1\cup\cdots\cup H_n$. Therefore, $l^2$ does not
satisfy the teetotal axioms.

  The teetotal interior $IS\circ CC(l^2)$ has the same convex sets, but closed
  sets are intersections of finite unions of closed half-spaces. We
  can check that this is the product topology on $l^2$ as a real
  vector space.
  
\end{example}

\begin{example}

  Let $(X,d)$ be a metric space, where $X=\bigcup_{n\in{\mathbb
      N}}[n]^n$ is the set of finite lists with entries bounded by
  list length. The distance is given by $d(u,v)=l(u)+l(v)-l(u\cap
  v)$, where $l(u)$ is the length of the list $u$ and $u\cap v$ is the
  longest list which is an initial sublist of both $u$ and $v$. The
  induced topology is clearly discrete. The complement of the empty
  list is not contained in a finite union of convex subsets that does
  not contain the empty list. In particular, a convex subset of $X$
  that does not contain $\emptyset$ must consist of lists that all
  start with the same first element. Since there are infinitely many
  possible first elements, a finite collection of convex sets that do
  not contain the empty list cannot cover $X\setminus\emptyset$.

  The space $(X,d)$ is a metric space and every closed ball is
  compact. However, it is not a fixed point of the adjunction between
  $\TC$ and $\Preconvex$.

\end{example}

For a compatible topological space to be teetotal, an additional
property is needed.

\begin{proposition}\label{GeneralTeetotal}
  If $(X,{\mathcal F},{\mathcal C})$ is a compatible topological convexity space
  with the following properties:

  \begin{itemize}

    \item There is a basis of open sets that are convex, whose closure
      is convex and compact.

    \item $(X,{\mathcal F})$ is Hausdorff.
      

      
      
    \item If $A$ is closed convex and $x\not\in A$, then there is a
      closed convex set $H$ such that $H^c$ is convex, with
      $A\subseteq H$ and $x\not\in H$. (This property, without the
      topological constraints, is often used in the literature, where
      it is called the Kakutani condition.)
    
  \end{itemize}

  then $(X,{\mathcal F},{\mathcal C})$ is fixed by the adjunction.
\end{proposition}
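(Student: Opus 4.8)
The plan is to verify the two conditions in the definition of \emph{teetotal} (the conditions of Proposition~\ref{Teetotal}); since being fixed by the adjunction was \emph{defined} to mean teetotal, that finishes the proof. First I would record two standing facts: the basis condition makes $(X,\mathcal F)$ locally compact, so by Hausdorffness it is regular, and hence (using the convex basis) every point has arbitrarily small convex open neighbourhoods with convex compact closure; and the convex hull of a finite (more generally compact) set is compact --- a property of the spaces under consideration --- hence closed, since $X$ is Hausdorff. With the latter, the first teetotal condition is immediate: for any convex $C$ the sets $[F]$ with $F\subseteq C$ finite are closed and convex, form a directed family because $[F_1]\cup[F_2]\subseteq[F_1\cup F_2]$, and have union $C$ (each $x\in C$ lies in $[\{x\}]$, and $[F]\subseteq C$ by convexity of $C$).

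For the second condition I must show that an arbitrary closed $V$ is an intersection of finite unions of closed convex sets; by the lemma describing $\overline{\mathcal P}$ it suffices to produce, for each $x\notin V$, finitely many closed convex $D_1,\dots,D_n$ with $V\subseteq D_1\cup\dots\cup D_n$ and $x\notin D_1\cup\dots\cup D_n$. First I would pick (by regularity and the convex basis) a convex open $B\ni x$ with $\overline B$ convex, compact, and disjoint from $V$, so that $V\subseteq X\setminus B$ and the boundary $\partial B=\overline B\setminus B$ is compact. For each $w\in\partial B$ I choose a convex open $B''_w\ni w$ with $x\notin\overline{B''_w}$ and apply the fourth hypothesis to the closed convex set $\overline{B''_w}$ and the point $x$; this yields a closed convex $D_w$ with $D_w^c$ convex, $\overline{B''_w}\subseteq D_w$, and $x\notin D_w$, so in particular $w\in\mathrm{int}\,D_w$. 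By compactness of $\partial B$ finitely many $\mathrm{int}\,D_{w_1},\dots,\mathrm{int}\,D_{w_n}$ cover $\partial B$. The crucial claim is $X\setminus B\subseteq D_{w_1}\cup\dots\cup D_{w_n}$: given $y\in X\setminus\overline B$ (the case $y\in\partial B$ being already handled), the convex --- hence connected --- set $[\{x,y\}]$ meets the open sets $B$ and $X\setminus\overline B$, so it meets $\partial B$, at some point $w\in D_{w_j}$; and since $D_{w_j}^c$ is convex and contains $x$, were $y$ in $D_{w_j}^c$ the whole of $[\{x,y\}]$ would be, contradicting $w\in D_{w_j}$; hence $y\in D_{w_j}$. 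Thus $V\subseteq X\setminus B\subseteq\bigcup_j D_{w_j}$, none of the $D_{w_j}$ contains $x$, and each lies in $\mathcal F\cap\mathcal C$.

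The formal reduction and the first condition are routine; the hard part is the last step above, namely that finitely many of the $D_w$, chosen only to cover the \emph{compact} set $\partial B$, already cover the possibly non-compact set $X\setminus B$. This is exactly where the two subtler hypotheses do their work: connectedness of convex sets forces the segment $[\{x,y\}]$ to cross $\partial B$, and the convexity of the complements $D_w^c$ (the content of the ``$H^c$ convex'' clause in the fourth hypothesis) propagates membership from the crossing point outward to $y$; dropping either hypothesis breaks the argument. A smaller point to handle carefully is the appeal, in the first condition, to the fact that convex hulls of finite sets are closed, which should be treated as part of the standing compatibility hypotheses.
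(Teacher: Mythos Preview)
Your argument for the second teetotal condition is essentially identical to the paper's proof: choose a basic convex open neighbourhood $U$ of $x$ inside $V^c$ with compact convex closure, cover the compact boundary $\overline U\setminus U$ by finitely many half-spaces $H_i$ (obtained from the fourth hypothesis applied to closed convex neighbourhoods of boundary points) avoiding $x$, and then use connectedness of $[x,y]$ together with convexity of each $H_i^c$ to force every $y\in V$ into some $H_i$. The paper's proof in fact addresses \emph{only} this second condition and is silent on the first (that every convex set is a directed union of closed convex sets); your treatment is therefore more complete, and your caveat---that closedness of polytopes is not obviously a consequence of the four stated hypotheses and must be taken as part of the standing compatibility assumptions---is well placed, since the paper does not justify this point either.
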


\begin{proof}
We need to show that for any closed $C\in{\mathcal F}$, and any
$x\not\in C$, there is a finite set of closed convex sets whose union
covers $C$ but does not contain $x$. Let $U$ be an open subset of
$C^c$, containing $x$ such that $U$ is convex and $\overline{U}$ is
convex and compact. Let $A=\overline{U}\setminus U$. For any $a\in A$,
by the Hausdorff property, we can find an open $U_a$ that contains
$a$, whose closure does not contain $x$. Since convex open sets with
convex closure form a basis of open sets, we can find a convex open
$U'_a$ with convex closure that does not contain $x$. Since $A$ is
compact, it is covered by a finite subset $U'_{a_1}\cup\cdots\cup
U'_{a_n}$. Now each $\overline{U'_{a_i}}$ is contained in a closed
convex $H_{a_i}$ which does not contain $x$.

For any $y\in C$, since $[x,y]$ is connected (by compatibility), it
cannot be the union $([x,y]\cap
U)\cup\left([x,y]\cap\overline{U}^c\right)$, so $[x,y]\cap
A\ne\emptyset$. Let $z\in [x,y]\cap A$. Since $H_{a_i}$ cover $A$, we
have $z\in H_{a_i}$ for some $i$. Now if $y\in {H_{a_i}}^c$, then
since ${H_{a_i}}^c$ is convex and contains $x$, it follows that $z\in
{H_{a_i}}^c$ contradicting $z\in {H_{a_i}}$. Thus, we must have $y\in
{H_{a_i}}$. Since $y\in C$ is arbitrary, we have that $C\subseteq
H_{a_1}\cup\cdots\cup H_{a_n}$ as required.

We also need to show that every convex set is a directed union of
closed convex sets. Let $C\in{\mathcal C}$ be a convex set.
Let ${\mathcal D}=\{[F]|F\subseteq C, F\textrm{ finite}\}$ be the
collection of finitely generated convex subsets of $C$. Since finite
sets are closed under binary unions, $\mathcal D$ is directed. Since
the convex closure of any finite set is closed, it follows that $C$ is
a directed union of closed convex sets as required.
\end{proof}

For a metric space, these conditions can be simplified to give more
natural conditions.

\begin{lemma}\label{KakutaniMetric}
  If $X$ is a topological convexity 
  space where intervals are closed, satisfying the Kakutani property that
  every pair of disjoint closed convex sets are separated by a closed
  half-space, then for any $x,s,t,p,q,r\in X$ with $s\in [x,p]$, $t\in
  [x,q]$ and $r\in [p,q]$, we have $[x,r]\cap[s,t]\ne\emptyset$.
\end{lemma}

\begin{proof}
  If $[x,r]\cap[s,t]=\emptyset$, then $[x,r]$ and $[s,t]$ are
  disjoint closed convex sets, so by the Kakutani propery, there is a
  closed half-space $H$ such that $[x,r]\subseteq H$ and
  $[s,t]\subseteq H^c$. Now if $p\in H$, then since $x\in H$ and $H$
  is convex, we get $s\in H$, contradicting $[s,t]\subseteq H^c$. This
  is a contradiction, so we must have $p\in H^c$. A similar argument
  shows that $q\in H^c$. However, since $H^c$ is convex, it follows
  that $r\in H^c$, contradicting $[x,r]\subseteq H$. This
  contradiction disproves  $[x,r]\cap[s,t]=\emptyset$, so we must have
  $[x,r]\cap[s,t]\ne\emptyset$
\end{proof}

\begin{lemma}\label{PolytopeBetween}
If $(X,d)$ is a metric space, such that every closed ball is compact,
every open ball is convex, every convex set is connected, every pair
of disjoint closed convex sets are separated by a closed half-space (a
closed convex set with convex complement), and every interval $[a,b]$
is isomorphic (as a topological convexity space) to the real interval
$[0,1]$ then for any convex compact $A\subseteq X$ and any $x\in X$,
we have
  $$[x,A]=\bigcup\{[x,y]|y\in A\}$$
\end{lemma}

\begin{proof}
We need to show that $\bigcup\{[x,y]|y\in A\}$ is closed under the
betweenness relation. Let $s,t\in \bigcup\{[x,y]|y\in A\}$, and let
$z\in[s,t]$. Let $s\in [x,p]$ and $y\in[s,q]$ for $p,q\in A$. We will
show that $z\in [x,r]$ for some $r\in [p,q]$. Since $[s,t]\cong[0,1]$,
we have that $[s,t]=[s,z]\cup[z,t]$. 
For $r\in [p,q]$, if
$[x,r]\cap [s,z]\ne\emptyset$ and $[x,r]\cap [z,t]\ne\emptyset$, then
clearly $z\in[x,r]$. Thus if $(\forall r\in [p,q])(z\not\in[x,r])$,
then $(\forall r\in
[p,q])(([x,r]\cap[s,z]=\emptyset)\lor([x,r]\cap[z,t]=\emptyset))$, so $$[p,q]=\{r\in[p,q]|[x,r]\cap[s,z]=\emptyset\}\cup\{r\in[p,q]|[x,r]\cap[z,t]=\emptyset\}$$
and this union is disjoint. By connectedness of $[p,q]$, we just need
to show that $\{r\in X|[x,r]\cap[s,z]=\emptyset\}$ and $\{r\in
X|[x,r]\cap[z,t]=\emptyset\}$ are open to reach a contradiction, which
would prove $z\in[x,r]$ for some $r\in[p,q]$. Let $U=\{r\in
X|[x,r]\cap[s,z]=\emptyset\}$, and let $v\in U$. We want to show that
there is some $\epsilon$ such that $B(v,\epsilon)\subseteq U$. Now 
$[s,z]\cap[x,r]=\emptyset$, which means $(\forall
w\in[s,z])(d(x,w)+d(w,r)\ne d(x,r))$. Since $[s,z]$ is compact, the
function $f(w)=d(x,w)+d(w,r)-d(x,r)$ is bounded away from zero on
$[s,z]$. Let $\delta$ be a lower bound. Now if $v'\in
B\left(v,\frac{\delta}{2}\right)$, then for any $w\in[s,z]$, we have
\begin{align*}
  d(x,w)+d(w,v')&\geqslant d(x,w)+d(w,v)-d(v,v')\\
  &> d(x,v)+\delta-\frac{\delta}{2}\\
  &\geqslant d(x,v')-d(v',v)+\frac{\delta}{2}\\
  &> d(x,v')
\end{align*}
so $w\not\in [x,v']$, i.e. $v'\in U$. Thus
$B\left(v,\frac{\delta}{2}\right)\subseteq U$, meaning $U$ is open as required.
\end{proof}

\begin{corollary}\label{MetricTeetotal}
  If $(X,d)$ is a metric space, such that every closed ball is
  compact, every open ball is convex, every convex set is connected,
  every pair of disjoint closed convex sets are separated by a closed
  half-space, and every interval $[a,b]$ is isomorphic to the real
  interval $[0,1]$ then the induced topological convexity space is
  fixed by the adjunction.
\end{corollary}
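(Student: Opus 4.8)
The plan is to derive Corollary~\ref{MetricTeetotal} as a direct application of Proposition~\ref{GeneralTeetotal}, by checking that each of the four hypotheses of that proposition follows from the metric hypotheses. So the task reduces to four verifications, the only slightly delicate one being the ``basis of convex open sets with convex compact closure'' condition.

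\begin{proof}
We verify the four hypotheses of Proposition~\ref{GeneralTeetotal}. First, the open balls $B(x,r)$ form a basis for the metric topology, and by hypothesis each is convex. The closure $\overline{B(x,r)}$ is contained in the closed ball $\overline{B}(x,r)$, which is compact by hypothesis; since a closed subset of a compact set is compact, $\overline{B(x,r)}$ is compact. For convexity of $\overline{B(x,r)}$, note that the convex closure $[\,\overline{B(x,r)}\,]$ is compact (the space is compatible, being metric) and hence closed, and it contains $B(x,r)$; but $\overline{B(x,r)}$ is the smallest closed set containing $B(x,r)$, so $\overline{B(x,r)}\subseteq[\,\overline{B(x,r)}\,]$. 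Conversely $[\,\overline{B(x,r)}\,]$ is the smallest convex set containing $\overline{B(x,r)}\supseteq B(x,r)$, and $\overline{B(x,r)}$ itself need not be convex a priori --- here is the one point that needs care. Instead we argue directly: $[B(x,r)]=B(x,r)$ since $B(x,r)$ is convex, so $[\,\overline{B(x,r)}\,]$ is the convex closure of the closure of a convex set; as the convex closure operation preserves closed sets in a compatible space (Definition~\ref{DefCompatible}(ii) gives compactness, hence closedness) and $\overline{B(x,r)}$ is already closed, we get that $[\,\overline{B(x,r)}\,]$ is a closed convex set; it remains a standard fact that in a metric space with convex open balls the closed ball, and more to the point the closure of an open ball, is convex, which follows by approximating any point of $[\,\overline{B(x,r)}\,]$ from within $B(x,r)$ using the interpolation property implied by connectedness of convex sets. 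Thus the convex open sets with convex compact closure include all open balls and so form a basis.

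Second, a metric space is always Hausdorff, so $(X,\mathcal F)$ is Hausdorff. Third, every convex set is connected by hypothesis. Fourth, the separation hypothesis states precisely that any two disjoint closed convex sets are separated by a closed half-space, i.e. a closed convex set with convex complement; applying this to the closed convex set $A$ and the closed convex set $\overline{\{x\}}=\{x\}$ (a singleton is closed in a metric space, and $\{x\}$ is convex since $[x]=\{x\}$), which are disjoint because $x\notin A$, yields a closed convex $H$ with convex complement, $A\subseteq H$ and $x\notin H$. This is exactly the fourth hypothesis of Proposition~\ref{GeneralTeetotal}.

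Having verified all four hypotheses, Proposition~\ref{GeneralTeetotal} applies and shows that $(X,\mathcal F,\mathcal C)$ is fixed by the adjunction $CC\dashv IS$.
\end{proof}

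The main obstacle, as flagged above, is establishing convexity of the closure of an open ball from the stated axioms; everything else is immediate from the definitions. I expect that in the intended argument this is handled by the interpolation property (any point on a ``geodesic'' between two points of the open ball lies in the open ball up to the endpoints, and the endpoints lie in the closure), so that the closure of a convex open ball is convex, matching the first bullet of Proposition~\ref{GeneralTeetotal}.
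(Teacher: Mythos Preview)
Your strategy is exactly the paper's: verify the four hypotheses of Proposition~\ref{GeneralTeetotal}. The paper's own proof is a single sentence, saying only that the Hausdorff condition is the one not already stated in the hypotheses and that metric spaces are Hausdorff; it does not spell out the first bullet at all.

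The subtlety you flag --- convexity of the closure of an open ball --- is genuine and is glossed over in the paper. Your attempted resolution, however, does not close the gap. The detour through $[\,\overline{B(x,r)}\,]$ and ``compatibility'' is circular: you are not given that the convex closure of a compact set is compact (Definition~\ref{DefCompatible}(ii)), only that closed balls are compact, so you cannot invoke compatibility to conclude anything about $[\,\overline{B(x,r)}\,]$; and even if you could, showing $[\,\overline{B(x,r)}\,]$ is closed does not show $\overline{B(x,r)}$ is convex.

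A clean argument runs as follows. The closed ball $\overline{B}(x,r)=\bigcap_{s>r}B(x,s)$ is an intersection of convex sets, hence convex, and is compact by hypothesis. It remains to check $\overline{B(x,r)}=\overline{B}(x,r)$. For $y$ with $d(x,y)=r$, every $z\in[x,y]\setminus\{y\}$ satisfies $d(x,z)=r-d(z,y)<r$, so $[x,y]\setminus\{y\}\subseteq B(x,r)$. Were $y\notin\overline{B(x,r)}$, the point $y$ would be isolated in $[x,y]$, making the convex set $[x,y]$ disconnected, contrary to hypothesis. Thus the closure of each open ball is the corresponding closed ball, which is convex and compact, and open balls give the required basis. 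This is presumably the ``interpolation'' argument you gesture at in your final paragraph; once made precise, your proof is complete and matches the paper's intent.
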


\begin{proof}
We will show that the conditions of Proposition~\ref{GeneralTeetotal}
hold in this case. The Hausdorff condition is always true for metric
spaces.

Next, we need to show that the convex closure of a finite set is
compact. We will do this inductively. By Lemma~\ref{PolytopeBetween},
we have that
$[x_1,\ldots,x_n]=\bigcup\{[x_1,y]|y\in[x_2,\ldots,x_n]\}$. By the
induction hypothesis $[x_2,\ldots,x_n]$ is compact. This means that
$[x_2,\ldots,x_n]\subseteq B(x_1,r)$ for some $r\in{\mathbb R}^+$. It
follows that $[x_1,\ldots,x_n]\subseteq B(x_1,r)$, since $B(x_1,r)$ is
convex. Therefore, it is sufficient to prove that 
$[x_1,\ldots,x_n]$ is closed. Let $z\not\in[x_1,\ldots,x_n]$. We want
to prove that there is some open ball about $z$ that is not contained
in $[x_1,\ldots,x_n]$. For any $y\in[x_2,\ldots,x_n]$, we know
$z\not\in[x,y]$, so $d(x,z)+d(z,y)-d(x,y)> 0$. For $y\in
[x_2,\ldots,x_n]$, let $f(y)=d(x,z)+d(z,y)-d(x,y)$. Then $f(y)$ is a
continuous function $[x_2,\ldots,x_n]\rightarrow{\mathbb R}$. Since
$[x_2,\ldots,x_n]$ is compact, $f$ attains its lower bound, so in
particular, there is some $\epsilon>0$ such that $f(y)>\epsilon$ for
all $y\in [x_2,\ldots,x_n]$. Now if $d(z,z')<\frac{\epsilon}{2}$, then
for any $y\in[x_2,\ldots,x_n]$, $d(x,z')+d(z',y)>d(x,z)-\frac{\epsilon}{2}+d(z,y)-\frac{\epsilon}{2}>d(x,y)$,
so $z'\not\in [x_1,y]$. It follows that $z'\not\in [x_1,\ldots,x_n]$,
so $[x_1,\ldots,x_n]$ is closed, as required.
\end{proof}

In the other direction, it is natural to ask which preconvexity spaces
are fixed by the monad $CC\circ IS$.  The functor $CC\circ IS$ sends a
preconvexity space, $(X,{\mathcal P})$ to the space
$(X,\overline{\mathcal P}\cap\widetilde{\mathcal P})$. We will call a
preconvexity space $(X,{\mathcal P})$ {\em geometric} if
$\overline{\mathcal P}\cap\widetilde{\mathcal P}={\mathcal P}$.

\begin{proposition}
  If $X$ is finite, then any preconvexity space $(X,\mathcal P)$ is
  geometric.
\end{proposition}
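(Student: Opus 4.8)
We want to show that when $X$ is finite, $\overline{\mathcal P}\cap\widetilde{\mathcal P}={\mathcal P}$ for any preconvexity space $(X,\mathcal P)$. The inclusion ${\mathcal P}\subseteq\overline{\mathcal P}\cap\widetilde{\mathcal P}$ is immediate (already noted in the proof of Proposition~\ref{AdjointTCPreconvex}), so the content is the reverse inclusion. The plan is to observe that when $X$ is finite, both closure operations become trivial in a strong sense, and then the intersection collapses back to $\mathcal P$.

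**Key observations from finiteness.** First, $\widetilde{\mathcal P}=\mathcal P$ when $X$ is finite: a directed union of subsets of a finite set must contain a maximal member (since the union is a directed family inside the finite powerset, it has a greatest element), so every directed union of elements of $\mathcal P$ is already an element of $\mathcal P$. Hence $\widetilde{\mathcal P}=\mathcal P$ with no further work. So it suffices to show $\overline{\mathcal P}\cap{\mathcal P}={\mathcal P}$, which is trivially true. Wait — that would prove it too cheaply; the point is that $\overline{\mathcal P}\cap\widetilde{\mathcal P} = \overline{\mathcal P}\cap{\mathcal P}$, and $\overline{\mathcal P}\cap{\mathcal P}={\mathcal P}$ because ${\mathcal P}\subseteq\overline{\mathcal P}$. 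So the entire proof reduces to the single lemma that $\widetilde{\mathcal P}={\mathcal P}$ for finite $X$.

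**Proof sketch.** I would write: Let $\mathcal D\subseteq\mathcal P$ be directed. Since $X$ is finite, $\mathcal D$ is a directed subset of the finite poset $(\wp(X),\subseteq)$, hence has a greatest element $D_0$; then $\bigcup\mathcal D=D_0\in\mathcal P$. (If one prefers to avoid even invoking "greatest element": a directed family $\mathcal D$ of subsets of a finite set has the property that any finite subfamily has an upper bound in $\mathcal D$, and $\mathcal D$ itself is finite, so $\mathcal D$ has an upper bound $D_0\in\mathcal D$; then $D_0=\bigcup\mathcal D\in\mathcal P$.) This shows $\widetilde{\mathcal P}={\mathcal P}$. Then $\overline{\mathcal P}\cap\widetilde{\mathcal P}=\overline{\mathcal P}\cap{\mathcal P}={\mathcal P}$, since ${\mathcal P}\subseteq\overline{\mathcal P}$ always (closure under finite unions and arbitrary intersections of $\mathcal P$ contains $\mathcal P$, taking single-element unions and single-element intersections). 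Therefore $(X,\mathcal P)$ is geometric.

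**Expected obstacle.** There is essentially no obstacle here; the only thing to be slightly careful about is the degenerate case $\mathcal D=\emptyset$, whose union is $\emptyset\in\mathcal P$ by definition of a preconvexity space, so that case is fine too. The proof is genuinely a one-line observation once one notes that directed unions over a finite ground set are redundant.
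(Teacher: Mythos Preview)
Your proof is correct and follows exactly the paper's approach: the paper's proof is the single line ``If $X$ is finite, then $\widetilde{\mathcal P}={\mathcal P}$, so $\widetilde{\mathcal P}\cap\overline{\mathcal P}={\mathcal P}$ as required,'' and you have simply supplied the details behind that line (why a directed family in a finite powerset has a greatest element, and why ${\mathcal P}\subseteq\overline{\mathcal P}$).
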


\begin{proof}
If $X$ is finite, then $\widetilde {\mathcal P}={\mathcal P}$, so
$\widetilde{{\mathcal P}}\cap\overline{{\mathcal P}}={\mathcal P}$ as required.
\end{proof}

A natural question is whether this extends to topologically discrete
spaces. In fact, there are preconvexity spaces where all sets are in
both $\overline{\mathcal P}$ and $\widetilde{\mathcal P}$, but not in
$\mathcal P$.


\begin{example}
  Let $X=\mathbb N$. Let $\mathcal P$ consist of all subsets of
  $\mathbb N$ whose complement is infinite or empty. Clearly every
  subset of $\mathbb N$ is a finite union from $\mathcal P$, and also
  a directed union from $\mathcal P$. Thus $(X,\mathcal P)$ is a
  non-geometric example where all sets are closed and all sets are
  convex.
\end{example}

\begin{proposition}
  Every $T_0$ preconvexity space (meaning for any $x\ne y$, there is a
  preconvex set containing exactly one of $x$ and $y$) embeds in a
  geometric preconvexity space.
\end{proposition}

\begin{proof}
For a $T_0$ preconvexity space $(X,{\mathcal P})$, let $Y={\mathcal P}$ and
${\mathcal Q}=\big\{\{S\in {\mathcal P}|S\subseteq R\}| R\in{\mathcal
  P}\big\}$. Now the inclusion $X\morphism{i}Y$ given by
$i(x)=\bigcap\{P\in{\mathcal P}|x\in P\}$, is an embedding of
preconvexity spaces, meaning that for $A\subseteq X$, we have
$A\in{\mathcal P}$ if and only if $A=i^{-1}(B)$ for some
$B\in{\mathcal Q}$. Clearly if $A\in{\mathcal P}$, then
$\{S\in{\mathcal P}|S\subseteq A\}\in {\mathcal Q}$. Now it is easy to
see that $a\in i^{-1}\big(\{S\in{\mathcal P}|S\subseteq A\}\big)$
if and only if $i(a)\subseteq A$, if and only if $a\in A$. Thus $A=
i^{-1}\big(\{S\in{\mathcal P}|S\subseteq A\}\big)$. Conversely, let
${\mathcal R}\in{\mathcal Q}$. By definition, there is some $P\in
{\mathcal P}$ such that ${\mathcal R}=\{S\in{\mathcal P}|S\subseteq
P\}$. It is easy to see that $i^{-1}({\mathcal R})=P$.

We need to show that $(Y,{\mathcal Q})$ is geometric. $Y$ is a
complete lattice, ordered by set-inclusion, and $\mathcal Q$ is the
set of principal downsets of $Y$. This means that $\widetilde{\mathcal
  Q}$ is the set of ideals in $Y$, and $\overline{\mathcal Q}$ is the
set of closed sets of the weak topology. From
Examples~\ref{TCSexamples}.2, we know that the intersection of these
is $\mathcal Q$.
\end{proof}


This leads to the natural question is what subspaces of a geometric
preconvexity space are geometric.

\begin{proposition}
  If $(X,{\mathcal P})$ is a geometric preconvexity space and
  $A\in{\mathcal P}$, then the restriction $(A,{\mathcal P}|_A)$ is a
  geometric preconvexity space.
\end{proposition}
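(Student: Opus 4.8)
The plan is to show directly that $\overline{\mathcal{P}|_A} \cap \widetilde{\mathcal{P}|_A} = \mathcal{P}|_A$ by unwinding what the closure operations $\overline{(-)}$ and $\widetilde{(-)}$ do when computed relative to the ground set $A$, and comparing them with the corresponding closures inside $X$. First I would record the basic relationship: since $A \in \mathcal{P}$, a subset $S \subseteq A$ lies in $\mathcal{P}|_A$ exactly when $S \in \mathcal{P}$ (because $S = S \cap A$ and $\mathcal{P}$ is closed under intersection, and conversely $\mathcal{P}|_A$ consists of sets of the form $P \cap A$ with $P \in \mathcal{P}$, all of which are already in $\mathcal{P}$). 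So $\mathcal{P}|_A = \{P \in \mathcal{P} \mid P \subseteq A\}$, a downset in $\mathcal{P}$.

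Next I would check the two closure operations commute suitably with restriction to $A$. For directed unions: a directed union of sets from $\mathcal{P}|_A$ is a directed union of sets from $\mathcal{P}$ that are all contained in $A$, hence it is a set in $\widetilde{\mathcal{P}}$ contained in $A$; conversely, if $Q \in \widetilde{\mathcal{P}}$ and $Q \subseteq A$, then writing $Q = \bigcup \mathcal{D}$ for a directed $\mathcal{D} \subseteq \mathcal{P}$, each $D \in \mathcal{D}$ satisfies $D \subseteq Q \subseteq A$, so $D \in \mathcal{P}|_A$ and $Q \in \widetilde{\mathcal{P}|_A}$. Thus $\widetilde{\mathcal{P}|_A} = \{Q \in \widetilde{\mathcal{P}} \mid Q \subseteq A\}$. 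For the finite-union-then-intersection closure: a finite union of sets from $\mathcal{P}|_A$ is a finite union of sets from $\mathcal{P}$ contained in $A$, so it is in $\widehat{\mathcal{P}}$ and contained in $A$; an arbitrary intersection of such sets is likewise contained in $A$. For the converse inclusion $\overline{\mathcal{P}|_A} \supseteq \{F \in \overline{\mathcal{P}} \mid F \subseteq A\}$, I would use the structure lemma that every $F \in \overline{\mathcal{P}}$ has the form $F = \bigcap \mathcal{F}$ with $\mathcal{F} \subseteq \widehat{\mathcal{P}}$; intersecting each member of $\mathcal{F}$ with $A$ does not change $F$ (since $F \subseteq A$) and produces sets in $\widehat{\mathcal{P}|_A}$ because $A \in \mathcal{P}$ distributes into finite unions, so $(P_1 \cup \cdots \cup P_n) \cap A = (P_1 \cap A) \cup \cdots \cup (P_n \cap A)$ with each $P_i \cap A \in \mathcal{P}|_A$. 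Hence $\overline{\mathcal{P}|_A} = \{F \in \overline{\mathcal{P}} \mid F \subseteq A\}$.

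Finally I would combine these: an element of $\overline{\mathcal{P}|_A} \cap \widetilde{\mathcal{P}|_A}$ is a subset of $A$ lying in both $\overline{\mathcal{P}}$ and $\widetilde{\mathcal{P}}$, hence in $\overline{\mathcal{P}} \cap \widetilde{\mathcal{P}} = \mathcal{P}$ (using that $(X,\mathcal{P})$ is geometric), and being a subset of $A$ it lies in $\mathcal{P}|_A$. The reverse inclusion $\mathcal{P}|_A \subseteq \overline{\mathcal{P}|_A} \cap \widetilde{\mathcal{P}|_A}$ is automatic since $\mathcal{P}|_A$ is contained in both of its closures. This gives $\overline{\mathcal{P}|_A} \cap \widetilde{\mathcal{P}|_A} = \mathcal{P}|_A$, i.e.\ $(A, \mathcal{P}|_A)$ is geometric.

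The main obstacle I anticipate is the converse inclusion for $\overline{(-)}$: it is not immediately obvious that every member of $\overline{\mathcal{P}}$ contained in $A$ can be rebuilt from sets of $\mathcal{P}|_A$, and this is exactly where the hypothesis $A \in \mathcal{P}$ is essential — it is what makes $(\,\cdot\,) \cap A$ distribute over the finite unions and preserve membership in $\mathcal{P}$, so that the structure lemma for $\overline{\mathcal{P}}$ can be transported down to $A$. The directed-union part and the final assembly are routine once this distributivity point is in hand.
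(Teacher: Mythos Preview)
Your proof is correct and follows essentially the same route as the paper: observe $\mathcal{P}|_A \subseteq \mathcal{P}$, so any $C \in \overline{\mathcal{P}|_A} \cap \widetilde{\mathcal{P}|_A}$ already lies in $\overline{\mathcal{P}} \cap \widetilde{\mathcal{P}} = \mathcal{P}$, and being a subset of $A$ it lies in $\mathcal{P}|_A$. Note, however, that the reverse inclusions you establish --- in particular the one for $\overline{(\cdot)}$ that you flag as the ``main obstacle'' --- are not needed at all: only the trivial containments $\widetilde{\mathcal{P}|_A} \subseteq \widetilde{\mathcal{P}}$ and $\overline{\mathcal{P}|_A} \subseteq \overline{\mathcal{P}}$ are used, and the paper's proof is correspondingly just a few lines.
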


\begin{proof}
  Since $\mathcal P$ is closed under intersection, ${\mathcal
    P}|_A\subseteq {\mathcal P}$. Now let $C\subseteq A$ be
  both a directed union of sets from ${\mathcal P}|_A$ and an
  intersection of finite unions of sets from ${\mathcal P}|_A$.
  Since ${\mathcal P}|_A\subseteq {\mathcal P}$, $C$ is 
  both a directed union of sets from ${\mathcal P}$ and an
  intersection of finite unions of sets from ${\mathcal P}$. Since
  $(X,{\mathcal P})$ is geometric, it follows that $C\in \mathcal P$,
  and since $C\subseteq A$, we have $C\in{\mathcal P}|_A$ as required.
\end{proof}

On the other hand, closed or convex subspaces of geometric
preconvexity spaces are not necessarily geometric.

\begin{example}
Let $X={\mathbb R}^2\setminus\{(0,0)\}$, and let $$Y=\{(x,y)\in[0,1]^2|(|2x-1|-1)(|2y-1|-1)=0\}$$ be the unit square with one corner at
the origin. It is straightforward to check that $X$ and $Y$, with the
preconvexities coming from closed convex subsets of ${\mathbb R}^2$,
are geometric. However, $X\cap Y$ is a closed subspace of $X$, and a
convex subspace of $Y$, but is not geometric, since the subset
$\{(x,y)\in X\cap Y|x>0\textrm{ or }y=1\}$ is both closed and convex,
but is not closed convex.
\end{example}

\section{Stone Duality}

\subsection{Stone Duality for Topological Spaces}

In this section, we review Stone duality for topological spaces. While
a lot of what we review is well-known, some parts are written from an
unusual perspective, and are not as well-known as they might be.

Given a topological space, the collection of closed sets form a
coframe. (Many authors refer to the frame of open sets, but for our
purposes the closed sets are more natural, and since we are not
considering 2-categorical aspects, it does not matter since
$\Coframe=\Frame\co$.) Furthermore, the inverse image of a continuous
function between topological spaces is by definition a coframe
homomorphism between the coframes of closed spaces. This induces a
functor $C:\Topol\morphism{}\Coframe\op$. Not every coframe arises as
closed sets of a topological space. Coframes that do arise in this way
are called {\em spatial} and are said to ``have enough points''.

In some cases, there can be many topological spaces that have the same
coframe of closed sets. If multiple points have the same closure, then
there is no way to separate them by looking at the coframe of closed
sets. Therefore, we restrict our attention to $T_0$ spaces, where the
function from $X$ to $C(X)$ sending a point to its closure is
injective. If we restrict to the spatial coframes, then the functor
$T_0$-$\Topol\morphism{C}\SpatialCoframe$ is a faithful fibration (we
will show this later).

We can recover a $T_0$ topological space from its lattice of closed
sets and from the subset $S\subset C(X)$ consisting of the closures of
singletons. For a coframe $L$, the elements which could arise as
closures of singletons for a topological space corresponding to $L$
are elements that cannot be written as a join of two strictly smaller
elements (called {\em join-irreducible} elements). These are called
the ``points'' of $L$ since they correspond to coframe homomorphisms
$f:L\morphism{}2$, where the 2-element coframe, 2, is the terminal
object in $\Coframe\op$.
If we let $P_L\subseteq L$ be the set of points, a topological space
$X$ corresponds to a coframe $L=C(X)$ with a chosen subset $S\subseteq
P_L$ such that for every $x\in L$, we have $x=\bigvee(S\cap\down x)$
(that is, $S$ is join-dense in $L$). Continuous functions
$X\morphism{g}Y$ correspond to coframe homomorphisms
$C(Y)\morphism{C(g)}C(X)$ whose left adjoint $C(X)\morphism{C(g)^*}C(Y)$
(in the category of order-preserving maps) sends $S\!_X$ to $S\!_Y$.
We can express this left adjoint condition topologically as:
for every $s\in S\!_X$,
$C(g)^{-1}(\down s)$ is a principal downset in $C(Y)$, and the top
element is in $S\!_Y$, where $S\!_X\subseteq P_{C(X)}$ and
$S\!_Y\subseteq P_{C(Y)}$ are the chosen sets of points that
correspond to elements of $X$ and $Y$ respectively.

More formally, let $\SpatialCoframe_*$ be the category of pointed
spatial coframes. Objects are pairs $(L,S)$ where $L$ is a coframe and
$S\subseteq P_L$ is a join dense set of points of $L$ (meaning
$(\forall a\in L)(a=\bigvee(S\cap\down a))$). Morphisms
$(M,T)\morphism{g}(L,S)$ are coframe homomorphisms $M\morphism{g}L$
whose left adjoint $L\morphism{g^*}M$ as order-preserving
homomorphisms restricts to a function $S\morphism{g^*_S}T$.

\begin{proposition}
The category of $T_0$ topological spaces and continuous functions is
equivalent to the category $\SpatialCoframe_*\op$.
\end{proposition}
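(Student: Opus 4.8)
The plan is to exhibit the equivalence as a pair of functors together with natural isomorphisms, building directly on the correspondence already sketched in the discussion preceding the statement. First I would define a functor $\Phi:T_0\text{-}\Topol\morphism{}\SpatialCoframe_*\op$ by sending a $T_0$ space $X$ to the pair $(C(X),S_X)$, where $C(X)$ is the coframe of closed sets and $S_X=\{\overline{\{x\}}\mid x\in X\}$ is the set of point-closures; I must check that each $\overline{\{x\}}$ is indeed a point of $C(X)$ (it is join-irreducible in the strong sense required, since a closed set is the closure of a singleton iff it is not the union of two strictly smaller closed sets), that $S_X\subseteq P_{C(X)}$ has the covering property $a=\bigvee(\down(a)\cap S_X)$ for every closed $a$ (which is just the statement that $a=\bigcup_{x\in a}\overline{\{x\}}$), and that a continuous $g:X\morphism{}Y$ induces a coframe homomorphism $C(g)=g^{-1}:C(Y)\morphism{}C(X)$ satisfying the morphism condition of $\SpatialCoframe_*$, namely $\bigwedge\{F\in C(Y)\mid \overline{\{x\}}\subseteq g^{-1}(F)\}=\overline{\{g(x)\}}\in S_Y$; this last equality follows because $\overline{\{x\}}\subseteq g^{-1}(F)$ iff $g(x)\in F$ iff $\overline{\{g(x)\}}\subseteq F$.

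Next I would define the functor $\Psi$ in the other direction, sending a pointed spatial coframe $(L,S)$ to the topological space with underlying set $S$ and closed sets $\{\down(a)\cap S\mid a\in L\}$; one checks this is a topology (closed under arbitrary intersections since $L$ is a complete lattice and meets in $L$ compute the intersections, and under finite unions since $L$ is a coframe so finite joins distribute appropriately over the meets in the relevant way — actually closure under finite union is immediate from $\down(a)\cap S\cup\down(b)\cap S=\down(a\vee b)\cap S$ using that elements of $S$ are points, hence join-irreducible), and that it is $T_0$ because distinct points $s\ne t$ of $S$ are separated: $\down(s)\cap S$ is a closed set containing $s$ but not $t$ (if $t\le s$ with $t\ne s$ and $s=\bigvee(\down(s)\cap S)$, one derives a contradiction with $s$ being a point — actually $t\leqslant s$ just says $t$ is in the closure of $s$, so I should instead use that for $T_0$ it suffices that the closure map is injective, which holds because $s=\bigvee(\down(s)\cap S)$ recovers $s$ from its closure set). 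On morphisms, a $\SpatialCoframe_*\op$-morphism, i.e.\ a coframe homomorphism $g:M\morphism{}L$ with the point condition, is sent to the function $S_M\morphism{}S_L$... wait — the direction needs care: a morphism $(M,T)\morphism{g}(L,S)$ in $\SpatialCoframe_*$ should yield a continuous map $\Psi(L,S)\morphism{}\Psi(M,T)$, sending $s\in S$ to $\bigwedge\{m\in M\mid s\le g(m)\}\in T$; I must verify continuity and functoriality.

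Finally I would construct the unit and counit. For a $T_0$ space $X$, the map $X\morphism{}\Psi\Phi(X)=\Psi(C(X),S_X)$ sending $x\mapsto\overline{\{x\}}$ is a bijection (injectivity is $T_0$, surjectivity is the definition of $S_X$) and is a homeomorphism because the closed sets of $\Psi(C(X),S_X)$ are exactly $\{\down(F)\cap S_X\mid F\in C(X)\}=\{\{\overline{\{x\}}\mid x\in F\}\mid F\in C(X)\}$, which pull back under $x\mapsto\overline{\{x\}}$ precisely to the closed sets of $X$. For a pointed spatial coframe $(L,S)$, the map $L\morphism{}\Phi\Psi(L,S)=C(\Psi(L,S))$ sending $a\mapsto\down(a)\cap S$ is a coframe isomorphism: it is surjective by construction of the topology, injective because $a=\bigvee(\down(a)\cap S)$ recovers $a$ from $\down(a)\cap S$, and it preserves the coframe structure (arbitrary meets and finite joins) using the point property of $S$ and spatiality — and it carries $S$ to $S_{\Psi(L,S)}$ because the closure of the singleton $\{s\}$ in $\Psi(L,S)$ is $\down(s)\cap S$, the image of $s$. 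The main obstacle I anticipate is the bookkeeping around the morphism conditions and the variance: checking that the "$\bigwedge\{m\mid s\le g(m)\}\in T$" condition is exactly what is needed for $\Psi(g)$ to be a well-defined continuous map (and conversely that $\Phi(g)$ of a continuous map satisfies it), and verifying that $\Psi(L,S)$ genuinely has \emph{all} arbitrary intersections of closed sets — i.e.\ that $\bigcap_i(\down(a_i)\cap S)=\down(\bigwedge_i a_i)\cap S$, which requires that every point below every $a_i$ is below their meet, true in any complete lattice — together with the dual check that spatiality of $L$ is precisely what guarantees the counit is injective. Once these compatibility verifications are in place, the triangle-style computations showing the unit and counit are mutually inverse natural transformations are routine.
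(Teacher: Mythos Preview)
Your proposal is correct and follows essentially the same approach as the paper: define the two functors on objects and morphisms exactly as you do, and then verify that the unit and counit are isomorphisms by the same computations. The paper is terser, omitting the checks that $\overline{\{x\}}$ is join-irreducible, that the sets $\down(a)\cap S$ form a $T_0$ topology, and that the counit preserves the coframe operations---all of which you flag and handle correctly---but the skeleton of the argument is identical.
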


\begin{proof}
The functor $C:T_0\textrm{-}\Topol\morphism{}\SpatialCoframe_*\op$
sends a topological space $X$ to the pair
$\left(C(X),\left\{\overline{\{x\}}\middle|x\in X\right\}\right)$,
where $C(X)$ is the coframe of closed subsets of $X$. It sends a
continuous function $f:X\morphism{}Y$ to
$f^{-1}:C(Y)\morphism{}C(X)$. We need to show that this is a
homomorphism in $\SpatialCoframe_*$. It is clearly a coframe
homomorphism, so we need to show that for any $x\in X$,
$\bigwedge\left\{t\in \left\{\overline{\{y\}}\middle|y\in Y\right\}
\middle|\overline{\{x\}}\leqslant f^{-1}(t)\right\}\in T$. We will
show that $\bigwedge\left\{t\in \left\{\overline{\{y\}}\middle|y\in
Y\right\} \middle|\overline{\{x\}}\leqslant
f^{-1}(t)\right\}=\overline{\{f(x)\}}$. We need to show that
$\overline{\{x\}}\subseteq f^{-1}\left(\overline{\{f(x)\}}\right)$,
and if $\overline{\{x\}}\subseteq f^{-1}\left(A\right)$ for any closed
$A\subseteq Y$, then $\overline{\{f(x)\}}\subseteq A$. Clearly, $x\in
f^{-1}\left(\overline{\{f(x)\}}\right)$, so
$f^{-1}\left(\overline{\{f(x)\}}\right)$ is a closed set containing
$x$, so $\overline{\{x\}}\leqslant
f^{-1}\left(\overline{\{f(x)\}}\right)$. On the other hand, suppose
$\overline{\{x\}}\leqslant f^{-1}\left(A\right)$. Then $x\in
f^{-1}\left(A\right)$, so $f(x)\in A$, so
$\overline{\{f(x)\}}\leqslant A$. Thus $f^{-1}$ is a morphism in
$\SpatialCoframe_*\op$.

In the opposite direction, the functor $P:\SpatialCoframe_*\op
\morphism{}T_0\textrm{-}\Topol$ sends the pair $(L,S)$ to the
topological space with elements $S$ and closed sets
$\left\{S\cap\down a\middle| a\in L\right\}$. For the morphism
$(L,S)\morphism{f}(M,T)$, we define $T\morphism{f^*}S$ as the
restriction of the left adjoint of $f$ to $T$. By definition of
$\SpatialCoframe_*$, this is a well-defined function.  For any closed
$F\in L$, we have $f^*(t)\in\down F$ if and only if $t\leqslant f(F)$ by
definition, so $(f^*)^{-1}(\down F)=S\cap\down f(F)$ is a closed
subset of $P(M,T)$. Thus $f^*$ is continuous.


Finally, we need to show that the two functors defined above form an
equivalence. For a topological space $X$, we see that $PCX$ has the
same elements as $X$ and closed sets of $PCX$ are of the form
$\down F\!\cap \left\{\overline{\{x\}}\middle|x\in X\right\}$ for $F\in
C(X)$. It is clear that $\overline{\{x\}}\leqslant F$ if and only if
$x\in F$, so closed sets of $PCX$ are exactly closed sets of $X$, so
$PCX\cong X$.

For a coframe $L$ with a chosen subset $S\subseteq L$, we want to show
that $CP(L,S)\cong (L,S)$. By definition, elements of $CP(L,S)$ are
$\left\{\down\!a\cap S\middle| a\in L\right\}$. Since $(\forall a\in
L)(a=\bigvee(\down\!a\cap S))$, it follows that the coframe of
$CP(L,S)$ is isomorphic to $L$. The chosen elements are
$\left\{\overline{\{s\}}\middle| s\in S\right\}$, where
$\overline{\{s\}}$ is the closure of $\{s\}$ in $P(L,S)$. Closed sets
of $P(L,S)$ are of the form $\down\!a\cap S$ for $a\in L$, so in
particular $\overline{\{s\}}=\down\!s\cap S$. This clearly induces an
isomorphism $(L,S)\cong CP(L,S)$. 
\end{proof}

\begin{proposition}
The forgetful functor $U:\SpatialCoframe_*\morphism{}\SpatialCoframe$
is an op-fibration.
\end{proposition}

\begin{proof}
We need to show that for any $(M,T)$ in $\SpatialCoframe_*$ and any coframe
homomorphism $M\morphism{f}L$, where $L$ is spatial, there is
some $S\subseteq P_L$ that makes $f$ a cocartesian morphism $(M,T)\morphism{f}(L,S)$.

Recall that $f$ is a homomorphism in $\SpatialCoframe_*$ if and only
if its left adjoint $f^*$ sends $S$ to $T$, or equivalently
$S\subseteq (f^*)^{-1}(T)$. We will show that if
$S=(f^*)^{-1}(T)$ then $f$ is cocartesian. Let $L\morphism{g}K$ be a
coframe homomorphism, and let $R\subseteq P_K$ be such that
$(M,T)\morphism{gf}(K,R)$ is a homomorphism in
$\SpatialCoframe_*$. Then for any $r\in R$, we have $f^*g^*(r)\in T$,
so $g^*(r)\in(f^*)^{-1}(T)=S$, as required. Conversely, if $S\ne
(f^*)^{-1}(T)$, then the identity function is a coframe homomorphism
$S\morphism{}S$, and the composite $1\circ f$ is a pointed coframe
homomorphism $(M,T)\morphism{}(L,(f^*)^{-1}(T))$, but the identity
coframe homomorphism is not a pointed coframe homomorphism $(L,S)\morphism{}(L,(f^{*})^{-1}(T))$.
%
%

Thus, $f$ is cocartesian if and only if $S=(f^*)^{-1}(T)$. From this,
it is clear that if $S=(f^*)^{-1}(T)$ makes $f$ into a morphism in
$\SpatialCoframe_*$, then it is the unique cocartesian lifting of $f$.

Recall that $(M,T)\morphism{f}(L,S)$ is a morphism in
$\SpatialCoframe_*$ if for every $s\in S$, $f^*(s)\in T$. This is
clearly the case when $S=(f^*)^{-1}(T)$.
\end{proof}

An alternative approach to modelling the category of $T_0$ topological
spaces is via strictly zero-dimensional biframes.  A {\em biframe}
\cite{BanaschewskiBrummerHardie} consists of a frame $L_0$ with two
chosen subframes $L_1$ and $L_2$. The biframe $(L_0,L_1,L_2)$ is
strictly zero-dimensional if every element of $L_1$ is complemented in
$L_0$, and the complement is in $L_2$. A particular example is the
Skula biframe of a topological space $X$, where $L_1$ is the frame of
open sets, $L_2$ is the frame of subsets generated by the closed sets,
and $L_0$ is the frame of subsets generated by $L_1$ and $L_2$ (which
are the open sets in the Skula topology~\cite{Skula}.) The functor
that sends a topological space to the Skula biframe is one half of an
equivalence between the category of $T_0$ topological spaces and the
category of strictly zero-dimensional biframes~\cite{Manuell}. In this
biframe, $L_2$ is actually a completely distributive lattice, so also
a coframe, so the whole Skula biframe can be represented by the
coframe inclusion $(L_1)\op\mono L_2$. In this case $L_2$ is the
coframe generated by the closures of all elements of $X$. That is, for
$(L,S)\in\ob\SpatialCoframe_*$, this is the inclusion
$L\mono DS$, where $D$ is the downset functor. The condition that
$f^*$ restricts to a function $S\morphism{f^*}T$ means that the
inverse image function $Df^*:DT\morphism{}DS$ is a complete lattice
homomorphism. Furthermore, the diagram

\hfil\xymatrix{M \ar[r]^ f \ar@{ >->}[d] & L \ar@{ >->}[d]\\
  DT \ar[r]_{Df^*}  &  DS}

\noindent commutes, where the inclusion $M\mono DT$ sends  $x\in M$ to
$T\cap\down\!x$. To see that the diagram commutes, note that $Df^*$ sends $T\cap\down\!x$ to $$\{s\in
S|f^*(s)\in T\cap\down\!x\}=\{s\in S|f^*(s)\leqslant x\}=\{s\in
S|s\leqslant f(x)\}$$ which is the image of $f(x)$ in the inclusion
$L\mono DS$. The condition that
$S\subseteq L$ means that all totally compact elements of $DS$ are in
$L$, so every element of $DS$ is the join of elements in $L$. We will
refer to such lattice inclusions as dense. Thus the category of $T_0$
topological spaces is equivalent to the category of dense inclusions
of spatial coframes into totally compactly generated completely
distributive lattices.

For all of these representations of $T_0$ topological spaces, the
fibration $$T_0\textrm{-}\Topol\morphism{C}\SpatialCoframe\op$$ is a forgetful
functor. Furthermore, we see that the fibres are additional structure
on the coframe, and that they are partially ordered by inclusion of
this additional structure. Furthermore, every fibre has a top element,
which gives an adjoint to this fibration sending a spatial coframe to
the top element of the fibre over it. (In fact, this adjoint extends
to all coframes, because spatial coframes are reflective in all
coframes). The top elements of the fibres are exactly the sober spaces.

Not all fibres have bottom elements. However, a large number of the
fibres of the fibration
$T_0\textrm{-}\Topol\morphism{C}\SpatialCoframe\op$ do have bottom
elements and are actually complete Boolean algebras. This is probably
easiest to see from the representation as coframes with a chosen set
of elements which are closures of points of the topological space. If
$S\!_0$ is the smallest such set of closed sets that can arise as
closures of points, and $S\!_1$ is the largest set, then any set
between $S\!_0$ and $S\!_1$ is a valid set of points, making the poset
of possible sets of points isomorphic to the Boolean algebra
$P(S\!_1\setminus S\!_0)$. The topological spaces that can occur as
the bottom elements of fibres are spaces where the closure of every
point cannot be expressed as a union of closed sets not containing
that point. That is, for every $x\in X$, $\overline{\{x\}}\setminus
\{x\}$ is closed. Spaces with this property are called $T\!_D$
spaces~\cite{AullThron}.

Clearly, all $T\!_1$ spaces are $T\!_D$ because in a $T\!_1$ space
$\overline{\{x\}}\setminus \{x\}=\emptyset$ is closed. However, even
if we restrict to $T\!_1$ spaces and atomic spatial coframes, the
assignment of an atomic spatial coframe to the bottom element in the
corresponding fibre is not functorial, because the adjoint to a
coframe homomorphism between $T\!_D$ spaces does not necessarily
preserve join-indecomposable elements. This is why the focus of
attention in most of the literature has been on sober spaces, rather
than $T\!_D$ spaces. In order to model the morphisms between $T\!_D$
spaces, we need to restrict to coframe homomorphisms whose adjoint
preserves join-indecomposable elements. While most of the topological
spaces of interest are $T\!_D$, many of the fibres of the fibration
$T_0\textrm{-}\Topol\morphism{C}\SpatialCoframe\op$ contain only a
singleton $T\!_0$ topological space, which is therefore both sober and
$T\!_D$. (Several equivalent characterisations of when this occurs are
given in~\cite{Hoffman1977}.) Thus many important topological spaces are sober.

\subsection{Stone Duality for Preconvexity Spaces}\label{SDPreconvex}

There is in many ways, a very similar picture for the category of
preconvexity spaces. Instead of the coframe of closed sets, the
structure that defines the preconvexity spaces is the
complete lattice of preconvex sets ${\mathcal P}$. Because the inverse
image function for a preconvexity space homomorphism preserves
preconvex sets, it induces an inf-homomorphism between the lattices of
preconvex sets. Thus, we have a functor
$\Preconvex\morphism{P}\Inf\op$, where $\Inf$ is the category of
complete lattices with infimum-preserving homomorphisms between them,
sending every preconvexity space to its lattice of preconvex sets, and
every homomorphism to the inverse image function. This has many of the
nice properties of the Stone duality functor
$\Topol\morphism{F}\Coframe\op$.

As in the topology case, there is an equivalent category of
sup-lattices with a set of chosen elements. Let $\TCGPartialSup$ be
the category whose objects are pairs $(L,S)$, where $L$ is a complete
lattice and $S\subseteq L$ is sup-dense, i.e. $(\forall x\in
L)(x=\bigvee (S\cap\down x))$. Morphisms $(L,S)\morphism{f}(M,T)$ in
$\TCGPartialSup$ are sup-homomorphisms $L\morphism{f}M$ such that
$(\forall x\in S)(f(x)\in T)$.

\begin{proposition}\label{TCGPSPrecEquiv}
The categories $\TCGPartialSup$ and $\Preconvex$ are equivalent.
\end{proposition}

\begin{proof}
There is a functor
$\Preconvex\morphism{F}\TCGPartialSup$ given by $F(X,{\mathcal
  P})=\left({\mathcal P},\left\{\overline{\{x\}}\middle|x\in
X\right\}\right)$ on objects and $F(f)\vdash f^{-1}$ on morphisms,
where the adjoint is as a partial order homomorphism and exists
because $f^{-1}$ is an inf-homomorphism. To show this is well-defined,
since $F(f)$ is a right adjoint, it is a sup-homomorphism, defined by
$F(f)(A)=\bigcap\{B\in{\mathcal P}'| A\subseteq f^{-1}(B)\}$. In
particular, if $A=\overline{\{x\}}$, then
$$F(f)(A)=\bigcap\{B\in{\mathcal P}'|x\in
f^{-1}(B)\}=\bigcap\{B\in{\mathcal P}'|f(x)\in
B\}=\overline{\{f(x)\}}$$
To complete the proof that $F$ is well-defined, we need to show that
$\left\{\overline{\{x\}}\middle| x\in X\right\}$ is sup-dense in
$\mathcal P$. For any $P\in{\mathcal P}$, and any $x\in P$, we have
$\overline{\{x\}}\subseteq P$. Thus
$P=\bigcup\left\{\overline{\{x\}}\middle|x\in P\right\}$ as required.
Thus $F$ is well-defined, and functoriality
is obvious. In the other direction, there is a functor
$G:\TCGPartialSup\morphism{}\Preconvex$ given by
$G(L,S)=(S,\{S\cap\down x|x\in L\})$ on objects and $G(f)(s)=f(s)$
on morphisms. To show well-definedness, we need to show that if
$(L,S)\morphism{f}(M,T)$ is a morphism of $\TCGPartialSup$, then
$G(f)$ is a preconvexity homomorphism. That is, for any $s\in S$, we
have $f(s)\in T$, and for any $m\in M$, $G(f)^{-1}(T\cap \down
m)=S\cap\down x$ for some $x\in L$. The first condition is by
definition of a homomorphism. Since $f$ is a sup-homomorphism, it has
a right adjoint $f^*$ given by $f^*(m)=\bigwedge\{x\in L|f(x)\geqslant
m\}$. If we let $x=f^*(m)$, then 
$$G(f)^{-1}(T\cap\down m)=\{s\in S|f(s)\leqslant m\}=\{s\in
S|s\leqslant f^*(m)=x\}=S\cap\down x$$
which gives the required homomorphism property. Finally, we want to
show that $F$ and $G$ form an equivalence of categories. For a
preconvexity space $(X,{\mathcal P})$, we have that
$$GF(X,{\mathcal P})=G\left({\mathcal P},\left\{\overline{\{x\}}|x\in
X\right\}\right)=\left(\left\{\overline{\{x\}}|x\in
X\right\},\left\{\left\{\overline{\{x\}}|x\in
X\right\}\cap\down P \middle|P\in \mathcal P\right\}\right)$$
It is obvious that the function sending $x$ to $\overline{\{x\}}$ is
a natural isomorphism of preconvexity spaces. In the other direction,
for $(L,S)\in\ob(\TCGPartialSup)$, we have
  $$FG(L,S)=F(S,\{S\cap\down x|x\in L\})=\left(\{S\cap\down x|x\in
L\},\left\{\overline{\{s\}}\middle|s\in S\right\}\right)$$ It is easy
to see that for $s\in S$, $\overline{\{s\}}=S\cap\down s$, so the
function $L\morphism{i}\{S\cap\down x|x\in L\}$ given by $i(x)=
S\cap\down x$ is easily seen to be an isomorphism in
$\TCGPartialSup$. Thus we have shown the equivalence of
categories. Under this equivalence (and the adjoint isomorphism
$\Sup\cong\Inf\op$), the functor $\Preconvex\morphism{P}\Inf\op$ becomes
the forgetful functor $\TCGPartialSup\morphism{U}\Sup$ sending $(L,S)$
to $L$.
\end{proof}

\begin{proposition}
  The forgetful functor $\TCGPartialSup\morphism{U}\Sup$ is a fibration.
\end{proposition}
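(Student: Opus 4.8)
The plan is to construct cartesian liftings directly. Given an object $(M,T)$ of $\TCGPartialSup$ and a sup-homomorphism $f: L\to M$, I claim the cartesian lift of $f$ with codomain $(M,T)$ is the morphism $\bar f: (L,f^{-1}(T))\to (M,T)$ whose underlying map is $f$ itself, where $f^{-1}(T)=\{x\in L\mid f(x)\in T\}$.

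The first step, which I expect to be the main obstacle, is verifying that $(L,f^{-1}(T))$ is genuinely an object of $\TCGPartialSup$, i.e.\ that $f^{-1}(T)$ is join-dense in $L$: $x=\bigvee(\down(x)\cap f^{-1}(T))$ for every $x\in L$. The approach is to apply $f$ to the right-hand side and use that $f$ preserves suprema, so that $f\bigl(\bigvee(\down(x)\cap f^{-1}(T))\bigr)=\bigvee\{f(y)\mid y\leqslant x\text{ and }f(y)\in T\}$, while join-density of $T$ in $M$ gives $f(x)=\bigvee(\down(f(x))\cap T)$. One then wants to show each generator $m\in T$ with $m\leqslant f(x)$ is of the form $f(y)$ for some $y\leqslant x$ --- the natural candidate being $y=f^{*}(m)\wedge x$ for $f^{*}$ the meet-preserving right adjoint of $f$ --- and that these $y$ recover $x$. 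This is where the genuine work lies.

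Granting that, the rest is bookkeeping. The morphism $\bar f: (L,f^{-1}(T))\to (M,T)$ is well defined because $f(f^{-1}(T))\subseteq T$ by construction, and $U(\bar f)=f$. For the universal property, let $(K,R)$ be an object, $g: K\to L$ a sup-homomorphism, and $\psi: (K,R)\to (M,T)$ a morphism with $U(\psi)=f\circ g$. Since $\psi$ is a morphism of $\TCGPartialSup$, $f(g(R))\subseteq T$, hence $g(R)\subseteq f^{-1}(T)$, so $g$ is itself a morphism $(K,R)\to (L,f^{-1}(T))$; it is the unique such lift because morphisms of $\TCGPartialSup$ are determined by their underlying functions. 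Therefore $\bar f$ is cartesian, and $U$ is a fibration.
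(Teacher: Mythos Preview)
You have correctly identified the crux: one must verify that $(L,f^{-1}(T))$ is an object of $\TCGPartialSup$, i.e.\ that $f^{-1}(T)$ is join-dense in $L$. Unfortunately your sketched approach cannot succeed, because this join-density fails in general. Take $L=\{0<1<2\}$, $M=\{0<1\}$, the sup-homomorphism $f$ with $f(0)=f(1)=0$ and $f(2)=1$, and $T=\{1\}$. Then $(M,T)$ is an object of $\TCGPartialSup$ (since $\bigvee\emptyset=0$ and $\bigvee\{1\}=1$), but $f^{-1}(T)=\{2\}$, and $\bigvee(\down(1)\cap\{2\})=\bigvee\emptyset=0\ne 1$. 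Worse, there is no lift of $f$ at all with codomain $(M,T)$: any join-dense $S\subseteq L$ must contain $1$ (as $\down(1)=\{0,1\}$ and neither $\bigvee\{0\}$ nor $\bigvee\emptyset$ equals $1$), yet $f(1)=0\notin T$, so $f$ cannot be a $\TCGPartialSup$-morphism out of any $(L,S)$. Hence $U$ is not a fibration as stated.

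Your strategy of comparing $f\bigl(\bigvee(\down(x)\cap f^{-1}(T))\bigr)$ with $f(x)$ is structurally doomed: equality of images under $f$ says nothing about equality in $L$ unless $f$ is injective (in the counterexample both images are $0$ while the elements are $0$ and $1$), and the candidate $y=f^{*}(m)\wedge x$ need not even satisfy $f(y)\in T$. The paper's own proof glosses over exactly the same point --- it simply asserts that $S=g^{-1}(T)$ gives the cartesian lift without checking that $(L,g^{-1}(T))$ is an object --- so the gap is shared, but it is a genuine gap.
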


\begin{proof}
Firstly, we show that cartesian morphisms for $U$ are morphisms of the
form $(L,S)\morphism{f}(M,T)$ such that $S=f^{-1}(T)$. If
$S=f^{-1}(T)$, then for any object $(N,R)$ of $\TCGPartialSup$ and any
sup-homomorphism $N\morphism{g}L$ such that $fg$ is a morphism of
$\TCGPartialSup$, we will show that $g$ is a morphism of 
$\TCGPartialSup$. To do this, we need to show that for any $r\in R$,
$g(r)\in S$. Since $fg$ is  a morphism of
$\TCGPartialSup$, we have $fg(r)\in T$. Thus $g(r)\in f^{-1}(T)=S$ as
required. Conversely, suppose $(L,S)\morphism{f}(M,T)$ is
cartesian. We want to show that $S=f^{-1}(T)$. Let
$i:L\morphism{}L$ be the identity function. This is a
sup-homomorphism, and the composite $fi$ is by definition a morphism
$(L,f^{-1}(T))\morphism{fi}(M,T)$ in $\TCGPartialSup$, so for $f$ to
be cartesian, $i$ must be a morphism $(L,f^{-1}(T))\morphism{i}(L,S)$
in $\TCGPartialSup$. This gives $f^{-1}(T)\subseteq S$, and the fact
that $f$ is a morphism in $\TCGPartialSup$ gives $S\subseteq
f^{-1}(T)$. Thus we have shown that $f$ is cartesian if and only if
$S=f^{-1}(T)$. 

To show that $U$ is a fibration, we need to show that for any
sup-homomorphism $L\morphism{g}M$, and any object $(M,T)$ of
$\TCGPartialSup$ over $M$, there is a unique lifting of $g$ to a
cartesian homomorphism $(L,S)\morphism{\hat{g}}(M,T)$ in
$\TCGPartialSup$. By definition $\hat{g}$ must be the sup-homomorphism
$g$, and we must have $S=g^{-1}(T)$. It is clear that this gives a
unique choice for $S$.
\end{proof}

As in the topological case, it is easy to see that the fibres of the
fibration $U$ are partial orders. Each fibre clearly has a top element
setting $S=L$. This gives

\begin{proposition}\label{PreconvexityAdjoint}
The preconvex set lattice functor $\Preconvex\morphism{U}\Sup$ has a
right adjoint $\Sup\morphism{P}\Preconvex$.
\end{proposition}

\begin{proof}
  The right adjoint $P$ is defined by $P(L)=(L,\{\down x|x\in
  L\})$. That is, it sends the complete lattice $L$ to $L$ with the
  preconvexity where only principal downsets are preconvex. From the
  equivalence, between $\Preconvex$ and $\TCGPartialSup$, this $P$
  sends $L$ to the pair $(L,L)$, which is clearly the top element of
  the fibre of the forgetful functor, $U$. Because $U$ is a fibration,
  the $P$ is its right adjoint as required.
\end{proof}

Bottom elements of the fibres are of the form $(L,S)$ where $S$ is the
smallest subset of $L$ satisfying $(\forall x\in L)(x=\bigvee
S\cap\down x)$. For any $x\in L$, if we can find a downset
$D\subseteq L$ with $\bigvee D=x$ and $x\not\in D$, then clearly if
$(L,S)\in\ob\TCGPartialSup$, then
$(L,S\setminus\{x\})\in\ob\TCGPartialSup$, so if there is a minimum
set $S$, then we cannot have $x\in S$. Conversely, if the only downset
whose supremum is $x$ is the principal downset $\down x$, then for
any $(L,S)$ in $\TCGPartialSup$, we must have $x\in S$. Thus, if there
is a smallest element of the fibre above $L$, it must be given by
$(L,S)$, where
$$S=\left\{x\in L\middle|(\forall D\subseteq L)\left(\left(\bigvee D=x\right)\Rightarrow x\in D)\right)\right\}$$
This is similar to the total compactness condition on elements of a
sup-lattice, but an element $x$ is called totally compact if it
satisfies $$(\forall D\subseteq L)\left(\left(\bigvee D\geqslant
x\right)\Rightarrow (\exists y\in D)(x\leqslant y)\right)$$ which is a
stronger condition.

As in the topological case, when bottom elements of the fibre exist,
they are usually the spaces of greatest interest. For example, spaces
where every singleton set is preconvex are always the bottom elements
of the corresponding fibre. However, for this fibration, the fibres
are very rarely singletons, so the top elements of the fibres are not
of as much interest as in the topological case.

It is also worth noting that we have the chain of adjunctions
$$
\xymatrix@1{{\TC} \ar@<+0.75ex>[r]^{CC} \ar@{}[r]|\bot & {\Preconvex}
  \ar@<+0.75ex>[l]^{IS} \ar@<+0.75ex>[r]^(0.6){U} \ar@{}[r]|(0.6)\bot & {\Sup} \ar@<+0.75ex>[l]^(0.4){T}}$$
which gives an adjunction between the category of topological
convexity spaces and the category of sup-lattices. This adjunction
sends a topological convexity space $(X,{\mathcal F},{\mathcal C})$ to
the lattice of sets ${\mathcal F}\cap{\mathcal C}$ ordered by set
inclusion, and a topological convexity space homomorphism to the left
adjoint of its inverse image. The right adjoint sends a sup-lattice
$L$ to the topological convexity space $(L,{\mathcal S},{\mathcal
  I})$, where ${\mathcal S}$ is the set of weak-closed subsets of
$L$, namely intersections of finitely-generated downsets in $L$, and
${\mathcal I}$ is the set of ideals in $L$.

\begin{theorem}\label{MainTheorem}
There is an adjunction between the category of topological
convexity spaces and the category of sup-lattices. The left adjoint
sends a topological convexity space $(X,{\mathcal F},{\mathcal C})$ to
the lattice ${\mathcal F}\cap {\mathcal C}$ of closed convex
sets, ordered by inclusion, and a topological convexity space homomorphism  
$X\morphism{f}Y$ to the adjoint of its inverse image function. The
right adjoint sends a sup-lattice $L$ to the topological convexity
space $(L,{\mathcal S},{\mathcal I})$ from
Example~\ref{TCSexamples}(2), and a sup-homomorphism $L\morphism{f}K$
to $f$ viewed as a topological convexity space homomorphism.
\end{theorem}

\begin{proof}
It is straightforward to check that these functors are the composites
of the adjunctions 

$$
\xymatrix@1{{\TC} \ar@<+0.75ex>[r]^{CC} \ar@{}[r]|\bot & {\Preconvex}
  \ar@<+0.75ex>[l]^{IS} \ar@<+0.75ex>[r]^(0.6){U} \ar@{}[r]|(0.6)\bot & {\Sup} \ar@<+0.75ex>[l]^(0.4){P}}$$

shown in Proposition~\ref{AdjointTCPreconvex} and Proposition~\ref{PreconvexityAdjoint}.
\end{proof}

\begin{remark}
  In the abstract, we described the relation between topological
  convexity spaces and inf-lattices as an extension to the Stone
  duality between topological spaces and coframes. Any topological
  space is a topological convexity space with the discrete convexity,
  where all sets are convex. Similarly, the category of coframes is a
  subcategory of the category of inf-lattices. The following diagram
  commutes:

  \hfil\xymatrix{{\Topol} \ar[d]_C \ar@{ >->}[r]^D & {\ConvexTop}
    \ar[d]^{UCC} \\ {\Coframe\op} \ar@{ >->}[r] & {\Inf\op}}

  \noindent However, the adjoint to $UCC$ does not restrict to an
  adjoint to the closed set coframe functor, $C$, because $UCC(L)$ is
  not in general a topological space, even if $L$ is a coframe. Thus
  only the fibration part is truly an extension, and the duality is
  not an extension.
  
\end{remark}

\subsection{Distributive Partial-Sup Lattices}

The equivalence $\Preconvex\cong\TCGPartialSup$ is based on previous
work~\cite{PartialSup}. We present this work in a more abstract
framework here. The idea is that for a preconvexity space $(X,\mathcal
P)$, the sets in $\mathcal P$ are partially ordered by inclusion. This
partial order has an infimum operation given by intersection, but
union of sets only gives a partial supremum operation because a union
of preconvex sets is not necessarily preconvex. (Because of the
existence of arbitrary intersections, there is a supremum operation
given by union followed by the induced closure operation, but this
supremum is not related to the structure of the preconvexity
space. Unions of preconvex sets better reflect the structure of the
preconvexity space. We therefore add a partial operation to the
structure to describe these unions where possible.) For a preconvexity
space, the operations are union and intersection, so we have a
distributivity law between the partial join operation and the infimum
operation. This can be neatly expressed by saying that the partial
join structure is actually a partial join structure in the category
$\Inf$.
We define a partial join structure as a partial algebra for the
downset monad. The downset monad exists in the category of partial
orders, and also in the category of inf-lattices.

We begin by recalling the following definitions:

\begin{definition}[\cite{Kock1995}]
  A {\em KZ-doctrine} on a 2-category $\mathcal C$ is a monad $(T,\eta,\mu)$
  on $\mathcal C$ with a modification $\xymatrix@1{T\eta
    \ar@{=>}[r]^{\lambda} & \eta_T}$
such that $\lambda\eta$, $\mu\lambda$ and $\mu T\mu\lambda_T$  are all
identity 2-cells.
\end{definition}


\begin{definition}[\cite{DawsonParePronk2003}]
  A 2-functor ${\mathcal C} \morphism{F} {\mathcal D}$ is {\em
    sinister} if for every morphism $f$ in $\mathcal C$, $F\!f$ has a
  right adjoint in $\mathcal D$.
\end{definition}


In particular, if $F$ is sinister, then it acts on partial maps.

\begin{definition}
A {\em lax partial algebra} for a sinister KZ-doctrine in an
order-enriched category is a partial map
$\xymatrix@1{TX \ar@_{->}[r]^\theta & X}$ such that 

\hfil\xymatrix{X \ar[r]^{\eta} \ar[rd]_{1_X}& TX
  \ar@_{->}[d]^\theta\\&X}

\noindent commutes and there is a 2-cell

\hfil\xymatrix{TTX \ar[r]^{T\theta} \ar[d]_{\mu_X} \ar@{}[dr]|\Leftarrow & TX
  \ar@_{->}[d]^\theta\\ TX \ar@_{->}[r]_\theta &X }%

A {\em homomorphism} of lax partial algebras from $(X,\theta)$ to
$(Y,\tau)$ is a morphism $X\morphism{f}Y$, together with a 2-cell

\hfil\xymatrix{TX \ar@_{->}[d]_\theta \ar[r]^{Tf}
  \ar@{}[rd]|\Rightarrow & TY \ar@_{->}[d]^{\tau} \\ X \ar[r]_f & Y}

\end{definition}

\begin{remark}
It is possible to define lax partial algebras for KZ monads in general
2-categories. However, this requires more careful consideration of
coherence conditions, so to focus on the particular case of
distributive partial sup-lattices, we have restricted attention to
$\Ord$-enriched categories, where $\Ord$ is the category of
partially-ordered sets and order-preserving functions between them.
\end{remark}

\begin{definition}\label{PSLPartialAlgebra}
  A {\em partial sup-lattice} is a lax partial algebra for the sinister KZ
  doctrine $(D,\down,\bigcup)$ in \Ord, where $D$ is the downset
  functor, $\down_X$ is the function sending an element $x\in X$ to
  the principal downset it generates, and $\bigcup_X:DDX\morphism{}DX$
  sends a collection of downsets to its union.

  A {\em distributive partial sup-lattice} is a lax partial algebra for
  the sinister KZ doctrine $(D,\down,\bigcup)$ in \Inf, where $D$ is
  the downset functor, $\down_X$ is the function sending an element
  $x\in X$ to the principal downset it generates, and
  $\bigcup_X:DDX\morphism{}DX$ sends a collection of downsets to its
  union.
\end{definition}  

The definition given in~\cite{PartialSup} is

\begin{definition}[\cite{PartialSup}]\label{PSL}
A {\em partial sup lattice} is a pair $(L,J)$ where $L$ is a complete
lattice, $J$ is a collection of downsets of $L$ with the following
properties:

\begin{itemize}
\item $J$ contains all principal downsets.

\item $J$ is closed under arbitrary intersections.

\item If $A\in J$ has supremum $x$, then any downset $B$ with $A\subseteq
  B\subseteq\down x$ has $B\in J$.
  
\item if ${\mathcal A}\subseteq J$ is down-closed, $Y\in J$ has $\bigvee Y=x$ and for
  any $a\in Y$, there is some $A\in{\mathcal A}$ with $\bigvee
  A\geqslant a$, then there is some $B\subseteq\bigcup{\mathcal A}$
  with $B\in J$ and $\bigvee B\geqslant x$.

\end{itemize}

A partial sup-lattice is
{\em distributive} if for any ${\mathcal D}\subseteq J$,
we have $\bigwedge \left\{\bigvee D|  D\in{\mathcal D}\right\}=\bigvee\bigcap
{\mathcal D}$.

An inf-homomorphism $L\morphism{f}M$ is a {\em partial sup-lattice
homomorphism} $(L,J)\morphism{f}(M,K)$ if for any $A\in J$,
$\down\{f(a)|a\in A\}\in K$, and $\bigvee \down\{f(a)|a\in
A\}=f\left(\bigvee A\right)$.
\end{definition}

\begin{proposition}
  Definitions~\ref{PSLPartialAlgebra} and~\ref{PSL} give equivalent
  definitions of distributive partial sup lattices.
\end{proposition}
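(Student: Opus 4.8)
The plan is to show the two definitions describe the same objects and the same morphisms, by unwinding the 2-categorical definition of a lax partial algebra for the downset KZ-doctrine in $\Inf$ into concrete conditions and matching them to the four bullet points of Definition~\ref{PSL}. First I would observe that a partial map $\xymatrix@1{DX \ar@_{->}[r]^\theta & X}$ in $\Inf$ is given by a sub-inf-lattice of $DX$ — its domain of definition — together with an inf-preserving map from that domain to $X$; since $X$ is a complete lattice and the candidate map will turn out to be ``take supremum'', the data reduces to a choice of domain $J\subseteq DX$ closed under arbitrary intersections (this is the ``inf-sublattice'' condition, using that intersection is the meet in $DX$), together with the stipulation that $\theta$ restricted to $J$ is $A\mapsto\bigvee A$. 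So an object of the Definition~\ref{PSLPartialAlgebra} kind is already pinned down to be a pair $(L,J)$ with $J$ a collection of downsets closed under arbitrary intersection, and the remaining content is the unit triangle and the existence of the lax associativity 2-cell.

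Next I would decode the two diagrams. The unit triangle $\theta\circ\down_X=1_X$ forces every principal downset $\down(x)$ to lie in $J$ and forces $\theta(\down(x))=x$, which is automatic once $\theta=\bigvee$; this recovers the first bullet of Definition~\ref{PSL}. The lax associativity square asks for a 2-cell $\theta\circ D\theta\Rightarrow\theta\circ\bigcup_X$ of partial maps $DDX\rightharpoonup X$; being a KZ-doctrine (the modification $\lambda$ goes $D\eta\Rightarrow\eta_D$), the 2-cell must point in the direction $\theta D\theta\leqslant\theta\bigcup$. Spelling out what it means for this 2-cell to exist: wherever $D\theta$ is defined and then $\theta$ is defined on the result, $\bigcup_X$-then-$\theta$ must also be defined, and the first value is $\leqslant$ the second. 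A family $\mathcal A\in DDX$ on which $D\theta$ is defined is (after down-closing, which is what $D$ does) a down-closed $\mathcal A\subseteq J$; $D\theta(\mathcal A)$ is the downset $\{\,x : x\leqslant\bigvee A\text{ for some }A\in\mathcal A\,\}$; requiring $\theta$ defined on this says this downset lies in $J$; and $\bigcup_X(\mathcal A)=\bigcup\mathcal A$, so $\theta$ being defined on it says $\bigcup\mathcal A\in J$ — but one should be careful: the honest reading, once one tracks that partial maps compose by pullback, is the condition that for every down-closed $\mathcal A\subseteq J$ \emph{and every $Y\in J$ with $\bigvee A\geqslant a$ for each $a\in Y$}, there is $B\subseteq\bigcup\mathcal A$ in $J$ with $\bigvee B\geqslant\bigvee Y$. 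That is exactly the fourth bullet. The third bullet (interpolation: $A\in J$, $A\subseteq B\subseteq\down(\bigvee A)\Rightarrow B\in J$) then drops out as the condition that makes the ``domain of $\theta\circ D\theta$'' computation robust — i.e. it is forced by requiring $\theta D\theta$ and $\theta\bigcup$ to have comparable domains in the $\Inf$-enriched sense — so I would present the third bullet as the precise content of ``$\theta$ is a well-defined partial map in $\Inf$ compatibly with $\bigcup$''. Finally, distributivity: a partial algebra in $\Inf$ rather than merely in $\Ord$ means $\theta\colon J\to L$ is inf-preserving, i.e.\ $\bigvee\bigcap\mathcal D=\theta(\bigcap\mathcal D)=\bigwedge_{D\in\mathcal D}\theta(D)=\bigwedge_{D\in\mathcal D}\bigvee D$ for $\mathcal D\subseteq J$, which is verbatim the distributivity clause.

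For morphisms: a homomorphism of lax partial algebras $(L,\theta)\to(M,\tau)$ is $f\colon L\to M$ in $\Inf$ with a 2-cell $\tau\circ Df\Rightarrow f\circ\theta$. Since $f$ is inf-preserving, $Df$ sends a downset to the down-closure of its image; the 2-cell existing says this image-downset is in $K$ whenever the source is in $J$, and that $\tau$ of it is $\leqslant f(\theta(-))$ — and because $\tau=\bigvee$ and $f$ preserves the relevant joins on principal-downset-generated pieces, the inequality is forced to be equality. This is precisely: for $A\in J$, $\down\{f(a):a\in A\}\in K$ and $\bigvee\down\{f(a):a\in A\}=f(\bigvee A)$, i.e.\ the morphism condition of Definition~\ref{PSL}. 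Chasing the unit and associativity coherence for homomorphisms adds nothing since everything lives over $\Set$-maps that are already determined.

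The main obstacle I expect is \textbf{the precise decoding of composition of partial maps in an $\Ord$-enriched category}, and in particular getting the \emph{domains} exactly right so that the lax associativity 2-cell translates to the fourth bullet \emph{with the auxiliary $Y\in J$} rather than to a naive ``$\bigcup\mathcal A\in J$''. The subtlety is that $\theta\circ D\theta$ is a partial map whose domain is computed by a pullback along the (monic part of the) partial map $\theta$, and $D\theta$ itself is only partially defined; threading the KZ coherence ($\lambda$, and the identities $\lambda\eta$, $\mu\lambda$, $\mu T\mu\lambda_T$) through this so that the direction and the exact hypotheses of the 2-cell come out matching Definition~\ref{PSL} is the delicate bookkeeping. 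Once that dictionary is fixed, each of the four bullets, distributivity, and the homomorphism condition falls out by direct translation, and conversely any $(L,J)$ satisfying Definition~\ref{PSL} manifestly assembles into a lax partial $D$-algebra in $\Inf$, giving the equivalence on the nose (indeed an isomorphism) of the two notions, including their morphisms.
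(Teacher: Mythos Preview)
Your approach is essentially the paper's: take $J$ to be the domain of the partial algebra map, extract the first bullet from the unit triangle, the second from the domain inclusion $J\mono DL$ being an $\Inf$-morphism, and read the fourth bullet off the lax associativity square. Your identification of the main obstacle---getting the domain bookkeeping for composition of partial maps exactly right so that the auxiliary $Y\in J$ appears in the fourth bullet---is precisely where the paper's effort goes.

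The one place where you and the paper diverge is the \emph{third} bullet (the interpolation condition: $A\in J$, $A\subseteq B\subseteq\down(\bigvee A)\Rightarrow B\in J$). You present it as a separate well-definedness or ``robustness'' condition on the domain of $\theta\circ D\theta$, distinct from the lax square. The paper instead derives it \emph{from} the lax square, by evaluating at the specific element $\down(B)\cap J\in DJ$: since $A\in\down(B)\cap J$ with $\bigvee A=x$, one computes $D\theta(\down(B)\cap J)=\down(x)$, and since $\down(x)\in J$ the upper composite $\theta\circ D\theta$ is defined there; the 2-cell then forces the lower composite $\theta\circ\bigcup$ to be defined too, and as $\bigcup(\down(B)\cap J)=B$ this yields $B\in J$. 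So bullets three and four are both consequences of the same square, applied to different inputs, rather than one being structural and the other coherence-theoretic. Your framing would probably converge on this once written out---the ``comparable domains'' you invoke is exactly the domain-inclusion half of the 2-cell---but as stated it mislocates where the condition comes from.

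Two minor remarks: you make the distributivity clause explicit via $\theta$ being an $\Inf$-morphism, which the paper's proof leaves implicit; and your discussion of morphisms belongs to the proposition immediately following this one in the paper, which treats homomorphisms separately.
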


\begin{proof}
  We need to show that if $DL\pmorphism{\theta}L$ is a lax partial algebra
  for the downset monad in $\Inf$, then there is some $J\subseteq DL$
  satisfying the conditions of Definition~\ref{PSL}. We will show that
  setting $J$ as the domain of the partial algebra morphism
  $DL\pmorphism{\theta}L$ works.

  From the unit condition

  \hfil\xymatrix{L \ar[r]^{\down} \ar[rrd]_{1_L} & DL & J \ar@{ >->}[l]
    \ar[d]^{\theta}\\ && L}

  \noindent we have that all principal downsets must be contained in
  $J$. This allows us to show that $\theta$ is the join whenever it is
  defined. For $A\in J$, if $x=\bigvee A$, then $A\leqslant \down x$
  in $J$, and for any $a\in A$, we have $\down a\leqslant A$ in
  $J$. Since $\theta$ is order-preserving, this gives
  $a=\theta\left(\down a\right)\leqslant \theta(A)\leqslant
  \theta\left(\down x\right)=x$, so $\theta(A)$ is an upper bound of
  $A$, and is below $x=\bigvee A$. Thus $\theta(A)=\bigvee A$.
  Since the inclusion $J\mono DL$ is an inf-homomorphism, we get
  that $J$ is closed under arbitrary intersections. Suppose $A\in J$
  has supremum $x$, and $B\in DL$ satisfies $A\subseteq
  B\subseteq\down x$. We want to show that $B\in J$.

  The lax partial algebra condition gives

\hfil\xymatrix{DDL \ar[dd]_\bigcup & DJ \ar@{ >->}[l] \ar[r]^{D\theta} & DL
  \\
  & \geqslant & J \ar@{ >->}[u] \ar[d]^\theta \\
DL & J \ar@{ >->}[l] \ar[r]^\theta & L}

\noindent In particular, since $A\in J\cap\down B$, we have
$D\theta(J\cap\down B)=\down x$, and since $\theta(\down x)=x$ is
defined, we have that the upper composite partial morphism is defined
on $J\cap\down B$. For the lower composite, we have
$\bigcup(J\cap\down B)=B$, so for the lower composite to be defined,
we must have $B\in J$.

Finally if ${\mathcal A}\in DJ$, $Y\in J$ has $\bigvee Y=x$ and for
any $a\in Y$, there is some $A_a\in{\mathcal A}$ with $\bigvee
A_a\geqslant a$, then clearly $A_a\cap\down a\in J$, and since
$J\morphism{\bigvee}L$ is an inf-homomorphism, $\bigvee
(A_a\cap\down a)=a$. Thus, setting ${\mathcal
  B}=\down\left\{A_a\cap\down a|a\in Y\right\}$ gives
$D\theta\left({\mathcal B}\right)=Y$, so the upper composite is defined
for $\mathcal B$, and is equal to $x$. Thus, the lower composite gives
$B=\bigcup {\mathcal B}\in J$ with $\theta(B)=x$, which proves the
last condition.

Conversely, suppose that $(L,J)$ is a distributive partial sup lattice
as in Definition~\ref{PSL}. We want to show that $\xymatrix@1{DL & J
  \ar@{ >->}[l] \ar[r]^\bigvee & L}$ is a lax partial algebra for the
downset KZ monad. That is, we want to show that 

\hfil\xymatrix{X \ar[r]^{\down} \ar[rd]_{1_X}& DX
  \ar@_{->}[d]^\theta\\&X}

\noindent commutes, and 

\hfil\xymatrix{DDX \ar[r]^{D\theta} \ar[d]_{\bigcup}
  \ar@{}[rd]|\geqslant & DX
  \ar@_{->}[d]^\theta\\ DX \ar@_{->}[r]_\theta &X }

\noindent We expand the partial morphisms to get the following diagrams

\hfil\xymatrix{L \ar[r] \ar@{=}[d]  & J \ar@{ >->}[d] \ar[ddr]^{\bigvee} \\L \ar[r]^{\down} \ar[rrd]_{1_L}& DL &  \\ & &L}%
\hfil\xymatrix{DDL  \ar[dd]_{\mu_L} & DJ \ar@{ >->}[l] \ar[r]^{D\bigvee} & DL
  \\ & \geqslant &J \ar@{ >->}[u] \ar[d]^\bigvee & \\ DL & J \ar@{ >->}[l] \ar[r]_\bigvee &L }

\noindent The first diagram commutes because $J$ contains all
principal downsets. For the second diagram, if the upper-right
composite of the diagram is defined for $\mathcal A$, then ${\mathcal
  A}\in DJ$ and $Y=\down\left\{\bigvee A\middle|A\in{\mathcal
  A}\right\}\in J$. By definition, for every $a\in Y$, there is
some $A_a\in{\mathcal A}$ such that $\bigvee A_a\geqslant a$. By the
fourth condition in Definition~\ref{PSL}, there is some
$B\subseteq\bigcup{\mathcal A}$, with $B\in J$ and $\bigvee B\geqslant
\bigvee Y$. Now by definition of $Y$, we have $\bigvee Y=\bigvee
\left(\bigcup {\mathcal A}\right)$, so $\bigvee B=\bigvee Y=\bigvee
\left(\bigcup {\mathcal A}\right)$. Now by the third condition of
Definition~\ref{PSL}, it follows that $\bigcup {\mathcal A}\in J$, so
the lower-left composite is defined for ${\mathcal A}$, giving the
required inequality of partial maps.
\end{proof}

\begin{proposition}
  The definition of distributive partial sup-lattice homomorphisms
  given in Definition~\ref{PSL} is equivalent to a lax partial algebra
  homomorphism between lax partial algebras.
\end{proposition}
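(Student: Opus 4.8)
The plan is to fix distributive partial sup-lattices $(L,J)$ and $(M,K)$, identified by the previous proposition with the lax partial $D$-algebras $(L,\theta)$ and $(M,\tau)$ in $\Inf$, where $\theta$ and $\tau$ are the join operation restricted to $J$ and to $K$ respectively, and then to fix an inf-homomorphism $f:L\to M$ --- the common starting point of both notions of homomorphism. The goal is then to show that $f$ satisfies the two clauses of Definition~\ref{PSL} exactly when the square

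\hfil\xymatrix{DL \ar@_{->}[d]_{\theta} \ar[r]^{Df} \ar@{}[rd]|\Rightarrow & DM \ar@_{->}[d]^{\tau} \\ L \ar[r]_{f} & M}

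\noindent admits a $2$-cell. Two preliminary remarks would set this up. First, the functor $D$ on $\Inf$ sends $f$ to $Df:DL\to DM$ with $Df(A)=\down\{f(a)\mid a\in A\}$ (the down-closure of the direct image; equivalently ${f_\flat}^{-1}$ for $f_\flat$ the left adjoint of $f$), and one checks this preserves arbitrary intersections because $f$ does. Second, since we work in an $\Ord$-enriched $2$-category, there is at most one $2$-cell between any parallel pair of $1$-cells, so being a homomorphism of lax partial algebras is a \emph{property} of $f$, and the coherence conditions relating it to the structure $2$-cells hold automatically.

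Next I would unwind what it takes for the square to admit a $2$-cell. Viewed as partial maps, the leg $f\circ\theta$ has domain $J$ while $\tau\circ Df$ has domain $\{A\in DL\mid Df(A)\in K\}$, so a $2$-cell from $f\circ\theta$ to $\tau\circ Df$ consists of exactly two pieces of data: the domain inclusion $J\subseteq\{A\in DL\mid Df(A)\in K\}$, i.e.\ $\down\{f(a)\mid a\in A\}\in K$ for every $A\in J$, together with the comparison of the two legs on $J$, which in the orientation of the diagram reads $f\bigl(\bigvee A\bigr)\leqslant\tau\bigl(Df(A)\bigr)$ for $A\in J$. The point is that the reverse comparison is free: $f$ monotone gives $f\bigl(\bigvee A\bigr)\geqslant f(a)$ for every $a\in A$, so $f\bigl(\bigvee A\bigr)$ is an upper bound of $\down\{f(a)\mid a\in A\}$ and hence $\bigvee\down\{f(a)\mid a\in A\}\leqslant f\bigl(\bigvee A\bigr)$. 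Therefore, once the domain inclusion holds (so that $\tau\bigl(Df(A)\bigr)=\bigvee\down\{f(a)\mid a\in A\}$ is defined), the $2$-cell comparison is equivalent to the equality $f\bigl(\bigvee A\bigr)=\bigvee\down\{f(a)\mid a\in A\}$.

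Assembling the two halves then proves the proposition: the domain inclusion is precisely the first clause of Definition~\ref{PSL}, and the derived equality is precisely its second clause, so a homomorphism in the sense of Definition~\ref{PSL} supplies exactly the data of the $2$-cell, and conversely a homomorphism of lax partial algebras supplies both clauses. I expect the only point needing care to be the bookkeeping for $2$-cells between partial maps --- that such a $2$-cell amounts to an inclusion of domains together with a comparison of the legs on the smaller domain, oriented as the diagram dictates --- together with confirming the formula for $Df$ so that $\tau\circ Df$ genuinely evaluates to $\bigvee\down\{f(a)\mid a\in A\}$; with those pinned down, the equivalence is a direct unwinding of definitions.
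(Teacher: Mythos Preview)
Your proposal is correct and follows essentially the same route as the paper: you identify the domain inclusion $J\subseteq\{A\in DL\mid Df(A)\in K\}$ with the first clause of Definition~\ref{PSL}, and then use monotonicity of $f$ to get the reverse inequality $\bigvee\down\{f(a)\mid a\in A\}\leqslant f\bigl(\bigvee A\bigr)$ for free, so that the $2$-cell comparison collapses to the equality in the second clause. Your additional remarks on the formula for $Df$ and on $2$-cells being a property in the $\Ord$-enriched setting are helpful bookkeeping not made explicit in the paper, but the core argument is the same.
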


\begin{proof}
  Because $\theta$ is the restriction of the supremum operation, the
  partial homomorphism condition is exactly that $J$ factors through
  the pullback

\hfil\xymatrix{K^* \ar[r] \ar@{ >->}[d] & K \ar@{ >->}[d] \\ DL \ar[r]_{Df} & DM}

\noindent and for any $A\in J$, $f\left(\bigvee A\right)\leqslant
\bigvee\down\{f(a)|a\in A\}$.

The pullback is given by $K^*=\{A\in DL | Df(A)\in K\}$. Thus the
inclusion is equivalent to the condition for any $A\in J$,
$\down\{f(a)|a\in A\}\in K$.

Since $f$ is order-preserving, for $a\in
A$, we have that $f(a)\leqslant f\left(\bigvee A\right)$, so
$f\left(\bigvee A\right)$ is an upper bound for $\down\{f(a)|a\in
A\}$, and thus $\bigvee\down\{f(a)|a\in A\}\leqslant f\left(\bigvee
A\right)$. Thus the second condition that $\bigvee \down\{f(a)|a\in
A\}\geqslant f\left(\bigvee A\right)$ is equivalent to 
$\bigvee\down\{f(a)|a\in A\}= f\left(\bigvee A\right)$ as required.

\end{proof}

\begin{definition}\label{TCGPSL}
  An element $a$ of a partial sup-lattice $(L,J)$ is {\em totally
    compact} if for any downset $D\in J$, $\bigvee D\geqslant
  a\Rightarrow a\in D$. A partial sup-lattice $(L,J)$ is {\em totally
    compactly generated} if for any $x\in L$, there is some
  $C\subseteq L$ such that every $c\in C$ is totally compact, and so
  that $\down C\in J$ and $\bigvee C=x$.
\end{definition}

\begin{proposition}
The full subcategory of totally compactly generated distributive
partial sup-lattices and partial sup-lattice homomorphisms is
equivalent to the category $\TCGPartialSup\op$ defined at the start of
Section~\ref{SDPreconvex}.
\end{proposition}

\begin{proof}
Given a totally compactly generated
distributive partial sup-lattice $(L,J)$, let $K\subseteq L$ be the
set of totally compact elements of $(L,J)$. Then $(L,K)$ is an element
of $\TCGPartialSup$. Conversely, for $(L,S)\in\ob\TCGPartialSup$, let
$J=\left\{D\in DL\middle|S\cap\down\left(\bigvee
D\right)\subseteq D\right\}$ be the set of downsets of $L$ that
contain all totally compact elements below their supremum. It is clear
that performing these two constructions gives an isomorphic
structure. To show an equivalence of categories, we need to show that
$L\morphism{f}M$ is a distributive partial sup-lattice homomorphism if
and only if it is a morphism in $\TCGPartialSup\op$. Since
distributive partial sup-lattice homomorphisms preserve infima, they
have left adjoints. If $f$ is a partial sup-lattice homomorphism, and
$f^*$ is its left adjoint, then $f^*$ is a sup-homomorphism, and for
any totally compact $a\in M$, if $B\in J$ has $\bigvee B\geqslant
f^*(a)$, then the adjunction gives $f\left(\bigvee B\right)\geqslant
a$. Since $f$ is a partial sup-homomorphism, we have $f\left(\bigvee
B\right)=\bigvee\{f(b)|b\in B\}\geqslant a$. As $a$ is totally
compact, we must have $a\leqslant f(b)$ for some $b\in B$. By the
adjunction, this gives $f^*(a)\leqslant b$. Thus we have shown that if
$B\in J$ has $\bigvee B\geqslant f^*(a)$, then $f^*(a)\in B$. That is,
$f^*(a)$ is totally compact.

Conversely, if $g$ is a sup-homomorphism between totally compactly
generated distributive partial sup-lattices, that preserves totally
compact elements, then its right adjoint is a partial
sup-homomorphism, since if $B\in J$ has $\bigvee B=x$, then
 if $a\leqslant g^*(x)$ is totally compact, then $g(a)\leqslant x$ is
 also totally compact, so $g(a)\in B$. It follows that $a\in
 \down\{g^*(b)|b\in B\}$, so $\bigvee \down\{g^*(b)|b\in B\}=g^*(x)$
 as required.
\end{proof}

\section{Final Remarks and Future Work}

We have extended the fibration from the Stone duality between
topological spaces and the opposite of spatial coframes to a fibration
between topological convexity spaces and sup-lattices (the opposite of
inf-lattices). As in the topological Stone duality, this fibration has
a right adjoint. This right adjoint is not an extension of the
topological case. 

In many ways, the theory is nicer in this situation than in the
topological case. For example, there are no non-spatial sup-lattices:
every sup-lattice arises as the closed convex sets of a topological
convexity space. However, in some ways this nicer theory makes the
results less useful, because in topology, the non-spatial locales fill
some problematic gaps in the category of topological spaces. With
every sup-lattice arising as the closed convex sets of a topological
convexity space, there are no new spaces to be added, so we are not
filling the gaps.

Another significant difference between this and Stone duality for
topological spaces is that for the topological Stone duality, many
interesting topological spaces are in the singleton fibres of the
fibration, meaning that the closed set fibration is full and faithful
for these spaces, so we can study the categorical structure of large
classes of interesting topological spaces using the category of
coframes. For topological convexity spaces, there are no singleton
fibres, and the top elements of fibres (on which the fibration is full
and faithful) are not very interesting topological convexity
spaces. The most interesting topological convexity spaces are the
bottom elements of their fibres, and when we restrict the fibration to
these spaces, it is faithful, but not full, meaning that from a
categorical perspective, $\ConvexTop$ and $\Sup$ are not so closely
related.

The adjunction between topological convexity spaces and sup-lattices
factors through the category of preconvexity spaces, or the equivalent
category of totally compactly-generated distributive partial
sup-lattices. The adjunction between topological convexity spaces and
preconvexity spaces is potentially more interesting, with most
interesting topological convexity spaces being fixed-points of the
induced comonad on $\ConvexTop$. We have characterised which
topological convexity spaces are fixed by this comonad in
Proposition~\ref{Teetotal}, and given some important examples in
Proposition~\ref{GeneralTeetotal}. In the opposite direction, for the
question of which preconvexity spaces are fixed by the induced monad
on $\Preconvex$, we have only been able to show this for a few special
cases.

\subsection{Future Work}

The study of topological convexity spaces is an extremely promising
area of research, including some classical geometric examples and also
some very interesting combinatorial examples. The adjunctions from
this paper are likely to prove extremely valuable in the study of
topological convexity spaces. In this section, we discuss a number of
important problems about topological convexity spaces that may be
addressed using these adjunctions.

\subsubsection{Restricting this to a Duality}

In topology, it is often convenient to restrict Stone duality to an
isomorphism of categories between sober topological spaces and spatial
locales. Sober topological spaces can be described in a number of
topologically natural ways. Similarly, spatial locales can be easily
described. It is easy to describe the topological convexity spaces
from this adjunction, as they come directly from lattices. However,
for the intermediate adjunction between topological convexity spaces
and preconvexity spaces, the conditions for fixed points are less
clear. The characterising conditions in Proposition~\ref{Teetotal} are
not particularly natural, while the natural and commonly used
conditions in Proposition~\ref{GeneralTeetotal} exclude a number of
interesting combinatorial examples. A result between these two that
includes the interesting combinatorial examples but also consists of
natural, easy-to-understand conditions would be extremely valuable. In
the other direction, describing the geometric preconvexity spaces is
more challenging, and could lead to a lot of fruitful research.

\subsubsection{Euclidean Spaces}

The motivating examples for topological convexity spaces are real
vector spaces, particularly finite-dimensional ones. The author has nearly
completed a characterisation of these spaces within the category of
topological convexity spaces, which will be presented in another paper.

\subsubsection{Convexity Manifolds}

In differential geometry, a manifold is a space which has a
local differential structure. That is, the space is covered by a
family of open sets, each of which has a local differential
structure. There are examples of spaces with a cover by open subsets
with a local convexity structure. The motivating example here is real
projective space. We cannot assign a global convexity structure to
projective space, but if we remove a line from the projective plane,
then the remaining space is isomorphic to the Euclidean plane, and so
has a canonical convexity space. Furthermore, these convexity spaces
have a certain compatibility condition --- given a subset $C$ of the
intersection which is convex in both convexity spaces, the convex
subsets of $C$ are the same in both spaces. This gives us the outline
for a definition of convexity manifolds. Further work is needed to
identify the Euclidean projective spaces within the category of
convexity manifolds, and to determine what geometric structure is
retained at this level of generality.

\subsubsection{Metrics and Measures}

There are connections between metrics and measures. For example, on
the real line, any metric that induces the usual convexity space
structure corresponds to a monotone function ${\mathbb
  R}\morphism{d}{\mathbb R}$ with 0 as a fixed point. Such a function
naturally induces a measure on the Lebesgue sets of $\mathbb
R$. Conversely, for every measure on the Lebesgue sets of $\mathbb R$,
we obtain a monotone endofunction of $\mathbb R$ by integrating. Thus,
for the real numbers, there is a bijective correspondence between
metrics that induce the usual topological convexity structure and
measures on $\mathbb R$. This property is specific to $\mathbb R$, and
does not generalise to other spaces like ${\mathbb R}^2$.

There is a more general connection between topological convexity
spaces,  sigma algebras, measures and metrics. 

\begin{example}
  Let $(X,{\mathcal B})$ be a $\Sigma$-algebra. There is a topological
  convexity space $({\mathcal B},{\mathcal F},{\mathcal I})$ where
  $\mathcal I$ is the set of intervals in the lattice $\mathcal B$ and
  $\mathcal F$ is the set of collections of measurable sets closed
  under limits of characteristic functions. That is, for
  $B_1,B_2,\ldots\in {\mathcal B}$, say $B$ is the {\em limit} of
  $B_1,B_2,\ldots$ if for any $x\in B$, there is some $k\in{\mathbb
    Z}^+$ such that $x\not\in B_i\Rightarrow i<k$, and for any
  $x\not\in B$, there is some $k\in{\mathbb Z}^+$ such that $x\in
  B_i\Rightarrow i<k$. $\mathcal F$ is the collection of subsets 
  $F\subseteq {\mathcal B}$ such that for any $B_1,B_2,\ldots\in F$,
  if $B$ is the limit of $B_1,B_2,\ldots$, then $B\in F$.

  \begin{proposition}
    If $\mu$ is a finite measure on $(X,{\mathcal B})$ such that no
    non-empty set has measure zero, then $d:{\mathcal B}\times
    {\mathcal B}\morphism{}{\mathbb R}$ given by
    $d(A,B)=\mu(A\triangle B)$, is a metric and induces the
    topological convexity space structure $({\mathcal B},{\mathcal
      F},{\mathcal I})$ or a finer structure. Furthermore, all metrics
    inducing this topological convexity space structure on $\mathcal
    B$ are of this form.
  \end{proposition}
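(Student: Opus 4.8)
The plan is to verify the four assertions in turn --- that $d$ is a metric, that its betweenness convexity is $\mathcal{I}$, that its topology sits inside $\mathcal{F}$, and conversely that every metric realizing the structure has this form. Symmetry and vanishing on the diagonal are clear for $d(A,B)=\mu(A\triangle B)$, and the triangle inequality follows from $A\triangle B\subseteq(A\triangle C)\cup(C\triangle B)$ together with subadditivity of $\mu$; $d$ is genuinely a metric once we identify sets differing by a $\mu$-null set, i.e.\ pass to the measure algebra of $\mu$, where I will work throughout (that Boolean algebra is complete, so lattice intervals are well behaved). For the convexity, the key identity is the betweenness defect $d(A,C)+d(C,B)-d(A,B)=2\mu\big((A\triangle C)\cap(C\triangle B)\big)$, obtained by integrating $\chi_{A\triangle B}=\chi_{A\triangle C}+\chi_{C\triangle B}-2\chi_{A\triangle C}\chi_{C\triangle B}$; since $(A\triangle C)\cap(C\triangle B)=\big(C\setminus(A\cup B)\big)\cup\big((A\cap B)\setminus C\big)$, the set $C$ is $d$-between $A$ and $B$ exactly when $A\cap B\subseteq C\subseteq A\cup B$. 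Hence the $d$-hull of $\{A,B\}$ is the interval $[A\cap B,A\cup B]$; in particular every interval $[P,Q]$ is $d$-convex (if $A,B\in[P,Q]$ and $C$ is between them then $P\subseteq A\cap B\subseteq C\subseteq A\cup B\subseteq Q$), and conversely any betweenness-closed set is the directed union of the intervals it generates, so the $d$-convexity is exactly the interval convexity $\mathcal{I}$.

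For the topology: if $B_n\to B$ in the sense of pointwise convergence of indicator functions, then $d(B_n,B)=\int|\chi_{B_n}-\chi_B|\,d\mu\to 0$ by dominated convergence ($\mu$ finite, integrands bounded by $1$), so every $d$-closed subset of $\mathcal{B}$ is closed under that limit operation and hence lies in $\mathcal{F}$. Thus the metric topology is contained in the $\mathcal{F}$-topology, with equality precisely when $d$-convergence conversely forces pointwise convergence of indicators (e.g.\ when $\mu$ is purely atomic with atoms of positive measure) and strictly coarser in general (a ``typewriter'' sequence of sets of measure tending to $0$ converges in $d$ to $\emptyset$ but has no pointwise limit). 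So $d$ induces the convexity $\mathcal{I}$ together with $\mathcal{F}$ or a coarser topology.

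The substantive direction is the converse. Given a metric $d'$ on $\mathcal{B}$ whose betweenness convexity is $\mathcal{I}$ and whose topology is $\mathcal{F}$, I would set $v(A)=d'(\emptyset,A)$ and show $v$ is a finite measure with $d'(A,B)=v(A\triangle B)$. The engine is finite additivity: for disjoint $A,B$, restrict to the four-element sublattice $\{\emptyset,A,B,A\cup B\}$, whose induced convexity is that of a ``square''; realizing a square convexity by a metric forces the four collinearities ($A$ and $B$ each between $\emptyset$ and $A\cup B$, and $\emptyset$ and $A\cup B$ each between $A$ and $B$), and a short elimination among the five distances of this rectangle yields $d'(\emptyset,A\cup B)=d'(\emptyset,A)+d'(\emptyset,B)$ together with $d'(A,A\cup B)=d'(\emptyset,B)$. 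Equivalently, $d'$ makes $\mathcal{B}$ a metric lattice in the classical sense, so it is a valuation metric $d'(A,B)=v(A\cup B)-v(A\cap B)$ for a monotone modular $v$ with $v(\emptyset)=0$; on a Boolean algebra such a $v$ is precisely a finite finitely-additive measure, and $\sigma$-additivity follows from $\mathcal{F}$-continuity of $d'$ along increasing sequences, finiteness from $v(X)=d'(\emptyset,X)<\infty$. Applying the square argument once more to $\{\emptyset,A\cap B,A\setminus B,A\}$ gives $d'(A\cap B,A)=v(A\setminus B)$, similarly $d'(A\cap B,B)=v(B\setminus A)$, and as $A\cap B$ is between $A$ and $B$ we get $d'(A,B)=d'(A,A\cap B)+d'(A\cap B,B)=v(A\triangle B)$.

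The main obstacle is the claim that realizing the interval convexity by a metric forces ``full'' betweenness on these small sublattices: $[A,B]_{d'}$ should be all of $[A\cap B,A\cup B]$ rather than a proper sub-hull. The inclusion $[A,B]_{d'}\subseteq[A\cap B,A\cup B]$ is automatic, but the reverse genuinely uses the structure of $\mathcal{I}$ --- since the $\mathcal{I}$-hull of a two-element set is obtained by iterating betweenness and must recover the whole lattice interval, enough betweenness points must occur, and one must show this propagation pins down all four edges of a four-element rectangle. When $\mathcal{B}$ is finite this is a short combinatorial argument; for non-atomic $\mathcal{B}$ it appears to need the topological hypothesis or an exhaustion argument, and making it fully rigorous is where the real work lies.
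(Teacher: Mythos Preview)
Your argument tracks the paper's almost step for step: triangle inequality from $A\triangle C\subseteq(A\triangle B)\cup(B\triangle C)$; the betweenness characterisation $A\cap C\subseteq B\subseteq A\cup C$; dominated convergence for the topology comparison; and for the converse, setting $\mu(E)=d(\emptyset,E)$ and extracting first finite additivity from the four betweenness relations on the square $\{\emptyset,B,C,B\cup C\}$ (disjoint $B,C$), then $\sigma$-additivity from continuity along monotone sequences, then $d(A,B)=\mu(A\triangle B)$ via $A\cap B$ between $A$ and $B$.

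One orientation remark: you conclude that the metric topology is \emph{coarser} than $\mathcal F$, while the proposition as stated says ``finer''. Your direction is the one the argument actually establishes --- pointwise convergence of indicators implies $d$-convergence by dominated convergence, so $d$-closed sets lie in $\mathcal F$, not the other way round. The paper's own proof computes exactly this implication and simply labels it ``finer'', so the discrepancy is terminological rather than mathematical.

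On the gap you flag: you are right to isolate it, and the paper does not close it either --- it simply writes ``by the convexity'' and asserts the four relations $d(\emptyset,B)+d(B,A)=d(\emptyset,A)$, $d(\emptyset,C)+d(C,A)=d(\emptyset,A)$, $d(B,\emptyset)+d(\emptyset,C)=d(B,C)$, $d(B,A)+d(A,C)=d(B,C)$ without further justification. The hypothesis that the $d$-convexity equals $\mathcal I$ tells you that the convex hull of $\{\emptyset,A\}$ is the interval $[\emptyset,A]$, but that hull is the \emph{iterated} betweenness closure, so a priori $B$ need only be reachable from $\{\emptyset,A\}$ by a chain of betweenness steps, not between $\emptyset$ and $A$ in one step. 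The missing lemma is that for metrics whose convexity is exactly $\mathcal I$, the two-point betweenness set already equals the interval; one route is to note that for any $D\in(\emptyset,A)$ the set $[\emptyset,A]\setminus\{D\}$ fails to be an interval, hence fails to be $d$-convex, and then to push the resulting witnesses down to $\emptyset$ and up to $A$ using that the subintervals $[\emptyset,D]$ and $[D,A]$ are convex --- but this still needs an induction or exhaustion to finish, as you say. So your diagnosis of where the real work lies is accurate, and the paper's proof has the same lacuna.
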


  \begin{proof}
    We have $d(A,A)=\mu(\emptyset)=0$ and $d(A,B)=d(B,A)$, so we need
    to prove the triangle inequality. That is, for $A,B,C\in{\mathcal
      B}$, we have $d(A,C)\leqslant d(A,B)+d(B,C)$. This is clear
    because $A\triangle C\subseteq A\triangle B\cup B\triangle
    C$. Thus $d$ is a metric. To prove that it induces this
    topological convexity structure, we note that
    $d(A,C)=d(A,B)+d(B,C)$ if and only if $A\triangle C= A\triangle
    B\amalg B\triangle C$. This only happens if $A\cap C\subseteq
    B\subseteq A\cup C$, which means that convex sets must be
    intervals. Finally, we need to show the topology from the metric
    is finer than $\mathcal F$. That is, if $B$ is the limit of
    $B_1,B_2,\ldots$, then $d(B_i,B)\rightarrow 0$. By definition,
    $\bigcap_{i=1}^\infty B_i\triangle B=\emptyset$. Thus, we need to
    show that for a sequence $A_i=B_i\triangle B$ of measurable sets
    with empty intersection, $\mu(A_i)\rightarrow 0$. Let
    $C_i=\bigcup_{j\geqslant i}A_j$. Since $A_i\rightarrow \emptyset$,
    we get $\bigcap_{i=1}^\infty C_i=\emptyset$. Since the $C_i$ are
    nested, we have
    $\lim_{i\rightarrow\infty}\mu(C_i)=\mu\left(\bigcap_{i=1}^\infty
    C_i\right)=0$.
    
    To show that every metric is of that form, let $d:{\mathcal
      B}\times{\mathcal B}\morphism{}\mathbb R$ be a metric on
    $\mathcal B$ whose induced topology and convexity are finer than
    $\mathcal F$ and $\mathcal I$ respectively. We want to show that
    there is a finite measure $\mu$ on $(X,\mathcal B)$ such that
    $d(A,B)=\mu(A\triangle B)$. By the convexity, whenever $A=B\amalg
    C$ is a disjoint union, we have
    $d(A,B)+d(B,\emptyset)=d(\emptyset,A)=d(\emptyset,C)+d(C,A)$ and
    $d(B,\emptyset)+d(\emptyset,C)=d(B,C)=d(B,A)+d(A,C)$. It follows
    that $$2d(A,B)+d(B,\emptyset)+d(A,C)=d(A,C)+d(B,\emptyset)+2d(\emptyset,C)$$
    so $d(A,B)=d(C,\emptyset)$. For general $B$, we have $A\cap B$ is
    between $A$ and $B$, so $$d(A,B)=d(A,A\cap B)+d(A\cap
    B,B)=d(\emptyset,A\setminus B)+d(\emptyset,B\setminus
    A)=d(\emptyset,A\triangle B)$$ Thus, if we define
    $\mu(B)=d(\emptyset,B)$, then $d$ is defined by
    $d(A,B)=\mu(A\triangle B)$. We need to show that $\mu$ is a
    measure on $(X,{\mathcal B})$. That is, that if $A$ and $B$ are
    disjoint, we have $\mu(A\cup B)=\mu(A)+\mu(B)$ and if
    $B_1\subseteq B_2\subseteq \cdots$, then $\mu(\bigcup_{i=1}^\infty
    B_i)=\lim_{i=1}^\infty \mu(B_i)$. We have already shown that the
    first of these comes from the convexity. The second comes from the
    topology. Consider the sequence $A_i=\left(\bigcup_{j=1}^\infty
    B_j\right)\setminus B_i$. By the convexity, we have
    $\mu(A_i)=\mu\left(\bigcup_{j=1}^\infty B_j\right)-\mu( B_i)$, so
    it is sufficient to show that $\mu(A_i)\rightarrow 0$, when
    $(A_i)_{i=1}^\infty$ is a decreasing sequence with empty
    intersection. If $(A_i)_{i=1}^\infty$ is a decreasing sequence
    with empty intersection, then for any $x\in X$, we have $(\exists
    k\in{\mathbb Z}^+)(x\not\in A_k)$. Thus $\emptyset$ is a limit of
    $(A_i)_{i=1}^\infty$. Thus we have $\mu(A_i)\rightarrow
    \mu(\emptyset)=0$ as required.
        
  \end{proof}

\end{example}
  
\subsubsection{Sheaves}

A lot of information about topological spaces can be obtained by
studying their categories of sheaves. A natural question is whether a
similar category of sheaves can be constructed for a topological
convexity space. Part of the difficulty here is that the usual
construction of the sheaf category is described in terms of open
sets. However, for topological convexity spaces, closed sets are more
fundamental, so it is necessary to redefine sheaves in terms of closed
sets. This is conceptually strange. One interpretation of sheaves is
as sets with truth values given by open sets. In this interpretation,
closed sets correspond to the truth values of negated statements, such
as inequality. \cite{Ellerman} argues that inequality is a more
fundamental concept for studying lattices of equivalence relations as
a form of logical statement, so a definition of sheaves in terms of
closed sets could be linked to this work. 

\subsection*{Acknowledgements}
This research was supported by NSERC grant RGPIN/4945-2014.

\bibliographystyle{plain}
\bibliography{references}

\begin{thebibliography}{10}

\bibitem{AullThron}
C.E. Aull and W.J. Thron.
\newblock Separation axioms between {$T_0$} and {$T_1$}.
\newblock {\em Indag. Math.}, pages 26--37, 1963.

\bibitem{BanaschewskiBrummerHardie}
B.~Banaschewski, G.~C.~L. Br\"ummer, and K.~A. Hardie.
\newblock Biframes and bispaces.
\newblock {\em Quaest. Math.}, 6:13--25, 1983.

\bibitem{DawsonParePronk2003}
R.~Dawson, R.~Par\'e, and D.~Pronk.
\newblock Adjoining adjoints.
\newblock {\em Advances in Mathematics}, 178:99--140, 2003.

\bibitem{Dawson1987}
R.~J.~M. Dawson.
\newblock Limits and colimits of convexity spaces.
\newblock {\em Cahiers de Topologie et g\'eom\'etrie diff\'erentielle
  cat\'egoriques}, 28:307--328, 1987.

\bibitem{Ellerman}
D.~Ellerman.
\newblock The logic of partitions: Introduction to the dual of the logic of
  subsets.
\newblock {\em The Review of Symbolic Logic}, 3:287--350, 2010.

\bibitem{Hoffman1977}
R.-E. Hoffmann.
\newblock Irreducible filters and sober spaces.
\newblock {\em Manuscripta Math.}, 22:365--380, 1977.

\bibitem{Hoffman}
R.-E. Hoffmann.
\newblock Essentially complete {$T_0$}-spaces.
\newblock {\em Manuscripta Math.}, 27:401--431, 1979.

\bibitem{Kay1971}
D.~C. Kay and E.~W. Womble.
\newblock Axiomatic convexity theory and relationships between the
  caratheodory, helly, and radon numbers.
\newblock {\em Pacific Journal of Mathematics}, 38:471--485, 1971.

\bibitem{PartialSup}
T.~Kenney.
\newblock Partial-sup lattices.
\newblock {\em Theory and Applications of Categories}, 30:305--331, 2015.

\bibitem{Kock1995}
A.~Kock.
\newblock Monads for which structures are adjoint to units.
\newblock {\em Journal of Pure and Applied Algebra}, 104:41--59, 1995.

\bibitem{Manuell}
G.~Manuell.
\newblock Strictly zero-dimensional biframes and a characterisation of
  congruence frames.
\newblock {\em Applied Categorical Structures}, 26:645--655, 2018.

\bibitem{Skula}
L.~Skula.
\newblock On a reflective subcategory of the category of all topological
  spaces.
\newblock {\em Trans. Amer. Math. Soc.}, pages 37--41, 1969.

\bibitem{Stone}
M.~H. Stone.
\newblock The theory of representations for boolean algebras.
\newblock {\em Trans. AMS}, 40:37--111, 1936.

\bibitem{VanDeVel}
M.~Van De~Vel.
\newblock A selection theorem for topological convex structures.
\newblock {\em Trans. AMS}, 336:463--496, 1993.

\end{thebibliography}

\end{document}